\def\eroman{\etype{\roman}}
\theoremstyle{plain}
\newtheorem{thm}{Theorem}[section]
\newtheorem{cor}[thm]{Corollary}
\newtheorem{lem}[thm]{Lemma}
\newtheorem{prop}[thm]{Proposition}
\newtheorem{definition}[thm]{Definition}
\newtheorem{theorem}[thm]{Theorem}
\theoremstyle{definition}
\newtheorem{remark}[thm]{Remark}
\newtheorem{example}[thm]{Example}
\newtheorem{rem}[thm]{Remark}
\newtheorem{defn}[thm]{Definition}
\def\Obst{\operatorname{Obst}}
\def\CAP{\operatorname{\mathcal{CAP}}}
\def\Capl{\operatorname{Cap}}
\def\Dcap{\operatorname{DCap}}
\def\Id{\operatorname{Id}}
\newcommand{\etype}[1]{\renewcommand{\labelenumi}{(#1{enumi})}}
\def\eroman{\etype{\roman}}
\def\sg{\sigma}
\def\lm{\lambda}
\def\dim{\mbox{\rm dim }}
\def\al{\alpha}
\def\MMO{\mathcal M}
\def\MM{\widehat{\mathcal M}}
\def\tMM{\tilde{\mathcal M}}
\def\l.l.o.{\it l.l.o}
\def\chiup{\raise 2pt\hbox{$\chi$}}
\DeclareMathOperator{\sgn}{sgn}
\def\a{\alpha}
\newcommand{\id}{\rm{id}}
\begin{document}

\title{The Braun-Kemer-Razmyslov Theorem for affine PI-algebras}
\author{ Alexei Kanel Belov, Louis Rowen}
%\date{\today}
\email{belova@math.biu.ac.il,  rowen@math.biu.ac.il}

\address{Department of Mathematics, Bar-Ilan University, Ramat-Gan
52900,Israel} %

 \thanks{This research of the authors was
supported by the Israel Science Foundation (grant no. 1207/12).}

 \thanks{An early version of Theorem ~\ref{Regcon} was written
   by Amitai Regev, to whom we are indebted for
 suggesting this project and providing helpful suggestions all along the way.}

\keywords{Polynomial identity, Jacobson radical,
Braun-Kemer-Razmyslov, Noetherian, Jacobson ring}

\begin{abstract}  A self-contained, combinatoric exposition is given for
the Braun-Kemer-Razmyslov Theorem over an arbitrary commutative Noetherian ring.% of any
%characteristic).
\end{abstract}

 \maketitle

%%%22222222222222222222222222222222222222222

\section {The BKR Theorem}

\subsection{Introduction}

%The goal of this paper is to provide a complete combinatoric proof, along the lines of Kemer and Razmyslov, of %the theorem of Braun (Theorem~\ref{BRK8}
%below) which is

  One of the major theorems in the theory of PI algebras is the Braun-Kemer-Razmyslov Theorem (Theorem~\ref{BKR8}
below). We preface its statement with some basic definitions.

\begin{definition}\label{affine8}
\begin{enumerate}
\item
An algebra $A$  is {\bf affine}  over a  commutative ring $C$    if $A$ is generated as an
algebra over $C$ by a finite number of elements $a_1, \dots,
a_\ell;$ in this case we write $A = C \{ a_1, \dots, a_\ell \}.$
\item
We say the algebra $A$ is  {\bf finite} if $A$ is spanned as a $C$-module by finitely many elements.

\item
  Algebras over a field are called {\bf PI algebras} if they satisfy (nontrivial) polynomial identities.

 \item
The {\bf Capelli polynomial} $\Capl_k$ of degree $2k$ is defined
as $$\Capl_k(x_1, \dots, x_k; y_1, \dots, y_k)= \sum_{\pi \in
S_k}\sgn (\pi) x_{\pi (1)}y_1 \cdots x_{\pi (k)}  y_k$$

\item $\operatorname{Jac}(A)$ denotes the Jacobson radical of the
algebra $A$ which, for PI-algebras is the intersection of the
maximal ideals of $A$, in view of Kaplansky's theorem.
 \end{enumerate}
 \end{definition}

\begin{thm} [The Braun-Kemer-Razmyslov Theorem]\label{BKR8} The Jacobson radical $\operatorname{Jac}(A)$ of any affine PI algebra $A$
over a field is nilpotent.
\end{thm}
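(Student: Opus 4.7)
The plan is to follow the classical Razmyslov--Zubrilin route: first reduce to the case where $A$ satisfies some Capelli identity, then apply Shirshov's height theorem to obtain a commutative Noetherian subring over which $A$ is finite, and finally invoke formal Cayley--Hamilton / trace identities to force $\operatorname{Jac}(A)$ to be nilpotent.

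First I would reduce to the case that $A$ satisfies $\Capl_n$ for some $n$. Every affine PI algebra over a field satisfies some Capelli identity; this follows from Regev's codimension bound in characteristic zero, and via Kemer's representability theorem in full generality. From this point on assume $A$ satisfies $\Capl_n$.

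Second, apply Shirshov's height theorem: $A$ is spanned as a $C$-module by bounded-height monomials in the generators $a_1, \dots, a_\ell$, with the commuting parameters being powers of Shirshov-regular words. The subalgebra $C_0$ generated over $C$ by these powers is commutative and affine, hence Noetherian, and $A$ is a finite $C_0$-module.

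Third---the technical heart---apply the Razmyslov--Zubrilin machinery. Using $\Capl_n$ one constructs formal ``characteristic coefficients'' for each element of $A$ and proves that every element satisfies a monic Cayley--Hamilton-like equation over $C_0$ modulo the obstruction ideal $\Obst$; the key is to show that $\Obst$ is nilpotent modulo an ideal of $\CAP$-evaluations, which is a purely combinatorial identity on alternating sums of Capelli substitutions. By Amitsur's theorem $\operatorname{Jac}(A)$ is nil (using Kaplansky's theorem as recalled in the excerpt), so the characteristic coefficients of any $j \in \operatorname{Jac}(A)$ lie in $\operatorname{Jac}(A) \cap C_0$, a nil ideal in a commutative Noetherian ring, hence itself nilpotent. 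The Cayley--Hamilton relation then collapses to $j^N \in \Obst$ for all $j \in \operatorname{Jac}(A)$ and a uniform $N$; combined with nilpotence of $\Obst$, this yields $\operatorname{Jac}(A)^M = 0$ for some $M$.

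The main obstacle is step three, specifically proving that $\Obst$ is nilpotent modulo the $\CAP$-evaluations ideal. This is the combinatorial core of Razmyslov--Zubrilin theory: a substantial identity expressing iterated Capelli substitutions as alternating sums of trace-like expressions, whose careful self-contained execution is exactly the step where the present exposition will be most tested.
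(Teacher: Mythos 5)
Your proposal follows the paper's architecture at a high level (Capelli reduction, Shirshov, obstruction ideal, Cayley--Hamilton via Capelli), but two of your steps as stated would fail, and a crucial inductive mechanism is missing. Shirshov's height theorem does \emph{not} make $A$ itself a finite $C_0$-module: it spans $A$ by monomials $b_1^{k_1}\cdots b_h^{k_h}$ with the $k_i$ unbounded, and bounding the exponents requires each $b_i$ to be integral of bounded degree over a central subring --- which is exactly what $\Obst_n(A)$ obstructs. If $A$ were already finite over an affine central commutative subalgebra, Theorem~\ref{nilrep3} would finish immediately and none of the Zubrilin machinery would be needed. The paper applies Shirshov only to the auxiliary algebra $A'$ of Proposition~\ref{Ra1}, into which $A/\Obst_n(A)$ embeds, in order to deduce $\operatorname{Jac}(A)^m\subseteq\Obst_n(A)$. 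Likewise, ``the characteristic coefficients of $j$ lie in $\operatorname{Jac}(A)\cap C_0$'' is not available: the $\xi_{i,a}$ are freely adjoined commuting indeterminates, not canonically determined elements of $A$, so there is no a priori way to locate them in $\operatorname{Jac}(A)$.

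More seriously, the paper never proves that $\Obst_n(A)$ is nilpotent; the key annihilation relation is $\Obst_n(A)\cdot(\mathcal{CAP}_n(A))^2=0$ (Theorem~\ref{last.3}), and $\Obst_n(A)$ can be large even when $\mathcal{CAP}_n(A)=0$. Your proposed finish --- ``$j^N\in\Obst$ for all $j$, combined with nilpotence of $\Obst$'' --- therefore rests on a false premise and does not close. The ingredient you are missing is the induction on the Capelli index $n$: since $A/\mathcal{CAP}_n(A)$ satisfies $\Capl_n$, the inductive hypothesis gives $\operatorname{Jac}(A)^q\subseteq\mathcal{CAP}_n(A)$ for some $q$, and then $\operatorname{Jac}(A)^{2q+m}\subseteq\Obst_n(A)\cdot(\mathcal{CAP}_n(A))^2=0$. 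Without that induction the annihilation relation is inert, and no argument on $\Obst$ alone can produce nilpotence of $\operatorname{Jac}(A)$.
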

The aim of this article is to present a readable combinatoric
proof (essentially self-contained in characteristic 0).

\medskip

 Let us put the BKR Theorem into its broader context in PI theory.
  We say a  ring   is {\bf Jacobson} if the Jacobson radical of every prime homomorphic image is
0.  For PI-rings, this means every prime ideal is the intersection
of maximal ideals.  Obviously any field is Jacobson, since its
only prime ideal 0 is maximal.  Furthermore, any commutative
affine algebra over a field is Noetherian by the Hilbert Basis
Theorem and is Jacobson, in view of
\cite[Proposition~6.37]{rowen4}, often called the ``weak
Nullstellensatz,'' implying the following two results:

\begin{itemize}
\item (cf.~Proposition~\ref{afre1}) If a commutative algebra $A$
is affine over a field, then $\operatorname{Jac}(A)$ is nilpotent.

\item (Special case of Theorem~\ref{nilrep3}) If  $A$ is a finite
algebra over an affine central subalgebra~$Z$ over a field, then
$\operatorname{Jac}(A)$ is nilpotent. (Sketch of proof: Passing to
homomorphic images modulo prime ideals, we may assume that $A$ is
prime PI, and $Z$ is an affine domain over which $A$ is
torsion-free.  The   maximal ideals of $Z$   lift up to maximal
ideals of $A$, in view of Nakayama's lemma, implying $Z \cap
\operatorname{Jac}(A)\subseteq \operatorname{Jac}(Z) = 0.$ If $0
\ne a \in \operatorname{Jac}(A),$ then writing $a$ as integral
over $Z$, we have the nonzero constant term in $Z \cap
\operatorname{Jac}(A) = 0,$ a contradiction.)
\end{itemize}

Since either of these hypotheses implies that $A$  is a
PI-algebra, it is natural to try to find an  umbrella result for
affine PI-algebras, which is precisely the   Braun-Kemer-Razmyslov
Theorem. This theorem was proved in several stages. Amitsur
~\cite[Theorem~5]{amitsur},  generalizing the weak
Nullstellensatz, proved that if $A$ is affine over a commutative
Jacobson ring, then $\operatorname{Jac}(A)$ is nil.  In
particular, $A$ is a Jacobson ring. (Later, Amitsur and Procesi
~\cite[Corollary~1.3]{amitsur.procesi} proved that
$\operatorname{Jac}(A)$ is locally nilpotent.) Thus, it remained
to prove that every nil ideal of $A$ is nilpotent.

It was soon proved that this does hold for an affine algebra which
can be embedded into a matrix algebra, see Theorem~\ref{nilrep1}
below.
 However, examples of Small~\cite{sm} showed the existence of affine PI algebras which can not be embedded into any matrix algebra.
Thus, the following theorem by Razmyslov~\cite{razmyslov3} was a
major breakthrough  in this area.

\begin{thm}[Razmyslov]\label{thm.raz}
 If
%$\operatorname{char}(F)=0$,
 an affine algebra  $A$ over a field  satisfies a
Capelli identity, then its Jacobson radical
$\operatorname{Jac}(A)$ is nilpotent.
\end{thm}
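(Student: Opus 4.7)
The plan is to reduce Razmyslov's theorem to the two bullet-point results already described in the introduction, by using the Capelli identity as a device to produce enough central elements. In one line: Capelli plus Shirshov's height theorem should exhibit $A$ as a finite module over an affine commutative subring, at which point Theorem~\ref{nilrep3} finishes the job.

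First I would use Amitsur's theorem (quoted earlier in the excerpt) to conclude that $\operatorname{Jac}(A)$ is already nil; hence it is enough to prove that every nil ideal $N \trianglelefteq A$ is nilpotent. It is also convenient to lift the problem to the relatively free algebra $U$ of the PI-variety generated by $A$: the Capelli identity $\Capl_k$ is inherited, and nilpotence of the corresponding nil ideal in $U$ descends to $A$. This ``generic'' passage is what allows us to use formal trace/characteristic coefficient constructions without worrying about specific scalar extensions.

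The combinatorial core is to manufacture, from $\Capl_k$, a substitute for the Cayley-Hamilton coefficients. Evaluating the Capelli polynomial with entries of the form $1,a,a^2,\dots,a^{k-1}$ and $a^k$ (together with appropriate auxiliary $y_i$'s) produces the Razmyslov-Zubrilin identities, which express $a^k$ modulo a controlled ``obstruction'' $\Obst$ as a combination of lower powers with \emph{central} coefficients $\chi_i(a_1,\dots,a_\ell)$. Let $C$ be the subalgebra of the center of $A$ generated over the base field by these $\chi_i$ applied to sufficiently many words in the generators $a_1,\dots,a_\ell$. Then $C$ is an affine commutative algebra over a field, hence Noetherian and Jacobson. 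Combining the characteristic relations just produced with Shirshov's height theorem (an affine PI-algebra has finite height over the subring generated by powers of a finite set of words), one obtains that $A$ is a \emph{finite} $C$-module.

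Once that reduction is in hand, the conclusion is immediate from Theorem~\ref{nilrep3}: $A$ is finite over the affine central subalgebra $C$, so $\operatorname{Jac}(A)$ is nilpotent. Equivalently, one argues directly: $N\cap C$ is a nil ideal in the affine commutative $C$, hence nilpotent by Proposition~\ref{afre1}, and Nakayama's lemma applied to $A$ as a finite $C$-module transfers this nilpotence from $N\cap C$ to $N$ itself.

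The main obstacle, and the whole reason Razmyslov's theorem is a breakthrough rather than an immediate corollary, is the middle paragraph: building the central ring $C$ from $\Capl_k$. Shirshov's height theorem by itself produces only an affine commutative subring $C'$ generated by powers of words, and this $C'$ need not be central, so one cannot lift the commutative Nullstellensatz to $A$ directly. It is precisely the Razmyslov-Zubrilin manipulation of Capelli, together with careful tracking of the obstruction $\Obst$, that upgrades Shirshov's spanning statement into finiteness over a genuinely \emph{central} affine subalgebra. Verifying the centrality of the $\chi_i$ modulo $\Obst$, and then showing that $\Obst$ can be absorbed (e.g.\ is itself nilpotent in the relatively free setting) is where the combinatorial work of the paper will concentrate.
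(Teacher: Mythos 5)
Your opening reductions (Amitsur to reduce to nil-implies-nilpotent, passage to the relatively free algebra, using Shirshov plus Capelli-derived Cayley--Hamilton coefficients) are all genuinely part of the paper's toolkit. But the core of your argument breaks down in the middle paragraph, and the patch you propose at the end is not the one that works.

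The claim that ``$A$ is a \emph{finite} $C$-module'' over an affine central subalgebra $C$ is false in general, and if it were true the theorem would reduce trivially to the already-known finite case (Theorem~\ref{nilrep3}) and there would have been nothing for Razmyslov to prove. What the Razmyslov--Zubrilin manipulation actually gives (Proposition~\ref{Ra1}) is that $A/\Obst_n(A)$ embeds into such a finite module, not $A$ itself. You acknowledge this gap at the end, but the proposed fix --- ``showing that $\Obst$ can be absorbed (e.g.\ is itself nilpotent)'' --- is not what happens and is not, to my knowledge, true. The obstruction ideal $\Obst_n(A)$ has no reason to be nilpotent.

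What is missing is the paper's induction on the Capelli index $n$, together with the genuinely combinatorial annihilation relation $\Obst_n(A)\cdot(\mathcal{CAP}_n(A))^2 = 0$ (Theorem~\ref{last.3}, via the doubly alternating module $\widehat{\mathcal M}$ and the relatively free product). The argument closes because three facts interlock: (a) the inductive hypothesis applied to $A/\mathcal{CAP}_n(A)$, which satisfies $\Capl_n$, gives $\operatorname{Jac}(A)^q\subseteq\mathcal{CAP}_n(A)$; (b) the Shirshov/integrality step you describe gives $\operatorname{Jac}(A)^m\subseteq\Obst_n(A)$; and (c) the annihilation relation then forces $\operatorname{Jac}(A)^{2q+m}\subseteq\Obst_n(A)\cdot(\mathcal{CAP}_n(A))^2=0$. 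Neither (a) nor (c) appears in your outline, and without them the bound on a power of $\operatorname{Jac}(A)$ lying inside $\Obst_n(A)$ is a dead end. Your proposal therefore identifies the correct ingredients but omits the mechanism --- the double inclusion plus annihilation, driven by induction on $n$ --- that makes them combine into a proof.
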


Although Razmyslov's theorem was given originally in
characteristic zero, he later found a proof that works in any
characteristic. As we shall see, the same ideas yield the parallel
result:

\begin{thm} \label{thm.raz1}
Let  $A$ be an affine algebra over a commutative Noetherian ring
$C$. If $A$ satisfies a Capelli identity, then any nil ideal of
$A$ is nilpotent.
\end{thm}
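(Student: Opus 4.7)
The plan is to mimic Razmyslov's proof of Theorem~\ref{thm.raz}, substituting a commutative Noetherian ring $C$ for the base field and running the trace/Cayley--Hamilton machinery off the Capelli identity. Let $N$ be a nil ideal of $A$; the goal is to exhibit $m$ with $N^m = 0$.

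First, I would extract from the hypothesis that $\Capl_k$ is an identity of $A$ a Cayley--Hamilton-type integral dependence for every $a\in A$. The vanishing of $\Capl_k$ says that every multilinear expression alternating in $k$ of its variables is zero, which is the algebraic shadow of ``$A$ acts like $k\times k$ matrices.'' Following Razmyslov's characteristic-free method (equivalently, Zubrilin's treatment of the Cayley--Hamilton--Capelli identity) one constructs a commutative $C$-algebra $Z_0$ acting on $A$, generated by characteristic coefficients $\chi_i(a)$ obtained from $a$ by Capelli substitutions, together with a monic polynomial $p_a(t)\in Z_0[t]$ of degree $k$ annihilating $a$. A Shirshov-type argument in parallel shows that $A$ is spanned as a $Z_0$-module by finitely many monomials in $a_1,\dots,a_\ell$ of bounded length.

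Second, I would exploit the Noetherian hypothesis on $C$. Since $Z_0$ is commutative and affine over $C$, it is Noetherian by the Hilbert basis theorem and Jacobson by the classical weak Nullstellensatz, so every nil ideal of $Z_0$ is nilpotent (Proposition~\ref{afre1}). For $a\in N$ each $\chi_i(a)$ is nil --- morally the ``eigenvalues of a nilpotent vanish'' statement, made rigorous via substitution into the trace construction. Therefore the ideal $J_0 \subseteq Z_0$ generated by all $\chi_i(a)$ as $a$ ranges over $N$ is a nil ideal of a Noetherian commutative ring, hence $J_0^s=0$ for some $s$. Cayley--Hamilton then closes the argument: each $a\in N$ satisfies $a^k\in J_0\cdot A$, and iterating this relation against the finite $Z_0$-spanning set, combined with $J_0^s=0$, forces $N^m=0$ for an explicit $m$ depending only on $k$, $s$, and the Shirshov bound.

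The principal obstacle is the first step: extracting a characteristic-free Cayley--Hamilton identity from the Capelli identity, and making sure its coefficients really land in a commutative, finitely generated piece on which Noetherian induction can be applied. In characteristic zero this is essentially immediate from Newton's formulas relating power sums to elementary symmetric polynomials, but in positive characteristic one must work directly with Razmyslov--Zubrilin's trace substitutions and argue combinatorially that the coefficients lie in the intended $Z_0$; this is the heart of the matter, and presumably where the paper's machinery of $\Obst$, $\CAP$, and $\Dcap$ will be deployed.
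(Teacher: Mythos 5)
Your plan assumes far too much at the first step: the Capelli identity $\Capl_k$ on $A$ does \emph{not} yield a monic degree-$k$ Cayley--Hamilton polynomial annihilating an arbitrary element $a\in A$ with coefficients in a commutative ring acting on $A$. Small's examples (cited in the paper) show that affine algebras satisfying a Capelli identity need not embed in any matrix ring, so there simply is no commutative $Z_0$-structure making every $a\in A$ integral over $Z_0$. The Zubrilin/Razmyslov integrality relation $\sum_{j=0}^n(-1)^j h^{n-j}\delta_{j,h}^{(n)}(g)\equiv 0$ is valid only modulo $\mathcal{CAP}_{n+1}$ and only when applied to the module $\MM$ of doubly alternating elements (Propositions~\ref{zubrilin2} and \ref{sof.1}); it is not an identity of $A$. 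The obstruction to the integrality you want is exactly the ideal $\Obst_n(A)=A\cap I_{n,A}$, which need not vanish. Consequently your claims that each $\chi_i(a)$ is nil and that $a^k\in J_0\cdot A$ have no justification: the $\chi_i(a)$ cannot even be defined as elements of a commutative ring acting on all of $A$, and the relation $a^k\in J_0\cdot A$ is precisely the kind of statement that fails outside the image $A/\Obst_n(A)$.

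The paper's proof avoids this trap by running a \emph{two-ideal} argument with an induction on the Capelli degree $n$ that your proposal omits. Assuming $A$ satisfies $\Capl_{n+1}$ but the theorem is known for $\Capl_n$: (i) $A/\mathcal{CAP}_n(A)$ satisfies $\Capl_n$, so by induction $N^q\subseteq\mathcal{CAP}_n(A)$ for some $q$; (ii) separately, $A/\Obst_n(A)$ embeds (via Shirshov's Height Theorem, Proposition~\ref{Ra1}) in an algebra finite over an affine commutative subring, where the nil-implies-nilpotent statement is classical (Theorem~\ref{nilrep1}), yielding $N^m\subseteq\Obst_n(A)$; (iii) the crux is then the combinatoric fact $\Obst_n(A)\cdot(\mathcal{CAP}_n(A))^2=0$ (Theorem~\ref{last.3}), proved by the Zubrilin $\delta$-operators, the relatively free product, and a delicate specialization argument. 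Only the \emph{combination} $N^{2q+m}\subseteq\Obst_n(A)\cdot(\mathcal{CAP}_n(A))^2=0$ kills $N$. Your sketch tries to do this in one shot by pretending the integrality holds globally, which is exactly what fails, and you have no analogue of the annihilation step (iii) or the induction on $n$ that feeds step (i). You correctly guessed that $\Obst$, $\mathcal{CAP}$, and $\Dcap$ are where the real work lies, but the architecture in which they are deployed is materially different from a direct ``$A$ is finite over a commutative trace ring'' argument.
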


Following Razmyslov's theorem, Kemer~\cite{kemer.0.5} then proved

\begin{thm}~\cite{kemer.0.5}\label{kemerfirst8}
In characteristic zero, any affine PI algebra satisfies some
Capelli identity (see Theorem~\ref{kemer.capelli}).
\end{thm}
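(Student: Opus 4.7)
The strategy is to use the representation theory of $S_n$ acting on multilinear polynomials. In characteristic zero a $T$-ideal is determined by its multilinear components, and the cocharacter of the $S_n$-module $V_n / (V_n \cap \operatorname{Id}(A))$ decomposes as $\chi_n(A) = \sum_{\lambda \vdash n} m_\lambda(A)\, \chi^\lambda$. The key reformulation is that $A$ satisfies $\Capl_k$ if and only if $m_\lambda(A) = 0$ whenever the height of $\lambda$ is at least $k$: the polynomial $\Capl_k$ is alternating in $k$ of its arguments, and therefore generates under the $S_n$-action precisely the isotypic components corresponding to partitions whose first column has length at least $k$. Thus the theorem reduces to showing that for an affine PI-algebra over a field of characteristic zero, the heights of partitions appearing in $\chi_n(A)$ are uniformly bounded in $n$.

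First I would invoke Regev's exponential codimension bound together with the Amitsur--Regev hook theorem, which already confines $\chi_n(A)$ to a hook $H(p,q) = \{ \lambda : \lambda_{p+1} \le q \}$ for an arbitrary PI-algebra. For a general (non-affine) PI-algebra the cocharacter need not lie in a strip --- the Grassmann algebra itself satisfies a polynomial identity but its cocharacter is supported on hook shapes of arbitrarily large height --- so affineness must be exploited in an essential way. The decisive structural input is Kemer's representability theorem: every affine PI-algebra over a field of characteristic zero is PI-equivalent to the Grassmann envelope $E(B) = B_0 \otimes E_0 \oplus B_1 \otimes E_1$ of some finite-dimensional $\mathbb{Z}/2$-graded algebra $B = B_0 \oplus B_1$. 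This is exactly the ingredient that upgrades a hook bound to a strip bound.

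With representability in hand, a direct calculation on the Grassmann envelope finishes the proof. Substituting $x_i = b_i \otimes e_i$ into $\Capl_k$ with $b_i \in B$ homogeneous and $e_i \in E$ of matching parity, the sign alternation in $\Capl_k$ combines with the anticommutation among the $e_i$ to yield a super-alternating expression in the $b_i$. A Vandermonde-type argument shows that such a super-alternator vanishes once $k$ exceeds the $F$-dimension of $B$, because no super-alternating expression on more vectors than the ambient dimension can survive. Since $A$ is PI-equivalent to $E(B)$, it satisfies $\Capl_k$ as well.

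The principal obstacle is Kemer's representability theorem itself, a deep structural result whose full proof requires substantial work with graded $T$-ideals, Kemer polynomials, a graded analogue of Shirshov's height theorem, and a delicate induction on the Kemer index of a $T$-ideal. The Grassmann-envelope calculation that converts super-Capelli on $B$ into ordinary Capelli on $E(B)$ is by contrast a short combinatorial exercise. A more elementary alternative, closer to the combinatorial spirit of the present paper, would be to bound the height of partitions in $\chi_n(A)$ directly, using Shirshov-type word combinatorics in the finite generating set of the affine algebra, thereby extracting just enough of representability to exhibit the desired Capelli identity.
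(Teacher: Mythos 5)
Your Grassmann-envelope calculation does not go through, and the counterexample is the one you mention yourself. The Grassmann algebra $E$ is a PI-algebra whose cocharacter has unbounded height, so $E$ satisfies no Capelli identity; yet Kemer's representability theorem in the form you cite (valid for all PI-algebras, affine or not) says precisely that $E$ is PI-equivalent to $E(B)$ for some \emph{finite-dimensional} superalgebra $B$. Hence the claim that a super-alternator must vanish once $k$ exceeds $\dim_F B$ cannot be correct. Concretely, when $\Capl_k$ is evaluated on odd elements $b_i \otimes e_i$ with $b_i \in B_1$, $e_i \in E_1$, the sign $\sgn(\pi)$ in $\Capl_k$ is exactly cancelled by the sign picked up when the anticommuting $e_{\pi(1)}, \dots, e_{\pi(k)}$ are slid into sorted order, and what survives is a \emph{symmetrizer} in the $B_1$-coordinates --- and a symmetric expression has no dimensional reason to vanish for $k > \dim B_1$. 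Only even substitutions produce a genuine alternator, which is why $E(B)$ satisfies a Capelli identity only when $B_1 = 0$, i.e.\ when $A$ is in fact PI-equivalent to a finite-dimensional ungraded algebra. In that case $\Capl_{\dim B + 1}$ is an identity for trivial reasons and the entire super-apparatus is superfluous. So the ingredient you would actually need is the \emph{affine} version of Kemer's representability theorem, not the Grassmann-envelope version.

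Even after that repair the route is problematic: Kemer's representability theorems have the existence of sparse/strong identities for affine algebras --- essentially the present theorem --- built into their proof, so invoking them here is at best an enormous detour and at worst circular, and it defeats the paper's stated aim of a self-contained combinatoric argument. The paper instead takes the elementary path you hint at in your closing paragraph. Regev's bound $c_m(A)\le (d-1)^{2m}$ and a hook-length estimate for a large rectangle $\mu=(u^v)$ force $s^\mu > c_n(A)$, so by Lemma~\ref{hook44} the two-sided ideal $I_\mu \subseteq F[S_n]$ lies in $\Id(A)$; Amitsur's $f\mapsto f^*$ construction together with branching (Lemma~\ref{right5}, Proposition~\ref{hook6}) upgrades these to \emph{strong}, hence \emph{sparse}, identities. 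Affineness enters only at the end: Shirshov-style pumping of words in the $r$ generators (Lemma~\ref{pump}) converts a sparse identity of degree $d'$ into $\Capl_n$ with $n = r^{d'}+d'$ (Theorem~\ref{kemer.capelli}). No structure theory is needed.
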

Thus, Kemer completed the proof of the following theorem:
\begin{thm}[Kemer-Razmyslov]
If  $A$ is an affine
 PI-algebra
 over a field $F$ of characteristic zero, then its  Jacobson radical $\operatorname{Jac}(A)$ is nilpotent.
\end{thm}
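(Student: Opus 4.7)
The plan is to obtain this theorem as an immediate synthesis of the two preceding results, Theorem~\ref{thm.raz} (Razmyslov) and Theorem~\ref{kemerfirst8} (Kemer). Since both are already available in the excerpt, the proof itself is essentially a one-line composition; the genuine mathematical content has been deposited into those two black boxes.

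First I would invoke Kemer's theorem: since $A$ is an affine PI-algebra over the field $F$ of characteristic zero, Theorem~\ref{kemerfirst8} supplies some integer $k$ such that $A$ satisfies the Capelli identity $\Capl_k$. The only thing to check is that the hypotheses of Kemer's theorem match ours, namely that $A$ is affine, PI, and $\Char F = 0$; all three are given.

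Next I would apply Razmyslov's theorem. Having established that $A$ is an affine algebra over a field satisfying some Capelli identity $\Capl_k$, Theorem~\ref{thm.raz} directly yields that $\operatorname{Jac}(A)$ is nilpotent, which is the desired conclusion.

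The real obstacles lie entirely inside the two cited theorems and are not faced again here. Razmyslov's theorem requires the combinatorial machinery (trace identities, Shirshov-type arguments, the obstruction to representability) that occupies the main technical body of the paper; Kemer's theorem rests on the deep fact that, in characteristic zero, the $T$-ideal of any affine PI-algebra contains a Capelli polynomial, which is a consequence of his structure theory for $T$-ideals and relies critically on the availability of multilinearization in characteristic zero (the reason positive characteristic demands a separate argument). So if I were to expand the proof of this corollary honestly, the single difficulty I would flag is that the characteristic-zero hypothesis is used \emph{only} through its role in Kemer's theorem, and any strengthening to arbitrary characteristic would require replacing that step rather than Razmyslov's.
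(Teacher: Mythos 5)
Your proof is correct and is exactly the synthesis the paper itself intends: the text introduces Theorem~\ref{kemerfirst8} and then states ``Thus, Kemer completed the proof of the following theorem,'' meaning the Kemer--Razmyslov theorem follows immediately by feeding Kemer's Capelli theorem into Razmyslov's Theorem~\ref{thm.raz}. Your remark that the characteristic-zero hypothesis enters only through Kemer's step is also accurate and consistent with the paper's later treatment of characteristic $p$.
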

%
% Braun~\cite{braun} then generalized and obtained the same result
% over an
%arbitrary field.

Then, using different
 methods relying on the structure of Azumaya algebras,
Braun proved the following result, which together with the
Amitsur-Procesi Theorem immediately yields Theorem~\ref{BKR8}:

\begin{thm}\label{Braun} Any nil ideal of an affine PI-algebra
over an arbitrary commutative Noetherian ring is nilpotent.
\end{thm}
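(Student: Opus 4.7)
The plan is to invoke Razmyslov's Theorem~\ref{thm.raz1}, which already gives nilpotence of nil ideals in affine algebras satisfying a Capelli identity over an arbitrary commutative Noetherian ring. Given an affine PI algebra $A$ over such a base $C$ with a nil ideal $N$, it therefore suffices to exhibit an integer $k$ for which $A$ satisfies $\Capl_k$; then Theorem~\ref{thm.raz1} immediately concludes that $N$ is nilpotent.

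In characteristic zero, this strategy is essentially immediate. Kemer's Theorem~\ref{kemerfirst8} asserts that every affine PI algebra (over a field) satisfies some Capelli identity. First I would reduce the base $C$ to the field case: since $C$ is Noetherian its nilradical is nilpotent and it has only finitely many minimal primes, so by passing to $C/P$ for each minimal prime $P$ and then to the fraction field, one reduces the nilpotence question on $A$ to the corresponding question for affine PI algebras over fields, while tracking the nilpotence bounds so that they combine back into a uniform bound for $N$. Applying Kemer~\ref{kemerfirst8} to each factor produces a Capelli identity $\Capl_k$, and Razmyslov~\ref{thm.raz1} concludes. This is the ``essentially self-contained in characteristic 0'' route alluded to in the introduction.

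In arbitrary characteristic the step of producing a Capelli identity is substantially more delicate. The natural route is through Kemer's full representability theorem, extended to positive characteristic via the Grassmann envelope of a finite-dimensional $\ZZ_2$-graded algebra. An alternative, more combinatorial route uses the Shirshov Height Theorem: any affine PI algebra has finite height over a set of products of its generators of bounded length, and from this structural description one can extract (after significant combinatorial work, via Razmyslov-style trace identity arguments) a Capelli identity satisfied by $A$.

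The main obstacle is exactly this step: producing a Capelli identity $\Capl_k$ for an affine PI algebra in positive characteristic over a commutative Noetherian base, without appealing to the characteristic-zero form of Kemer's theorem. Once the Capelli identity is in hand, Theorem~\ref{thm.raz1} completes the proof uniformly over any commutative Noetherian base $C$. Secondary technical care is required in the descent from $C$ to residue fields so that nilpotence indices obtained on the quotients assemble into a uniform nilpotence index on $N$; this is routine Noetherian bookkeeping but should be done explicitly before invoking the Capelli machinery.
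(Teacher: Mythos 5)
Your high-level skeleton matches the paper exactly: the paper proves Theorem~\ref{Braun} by establishing that every affine PI-algebra over a commutative Noetherian ring satisfies some Capelli identity (Theorem~\ref{kemerfirst9}) and then invoking Theorem~\ref{thm.raz1}. However, your proposal leaves the central step as an acknowledged gap rather than proving it, and the route you gesture at is not the one the paper takes.

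Concretely, for the positive-characteristic case you describe two possible routes (Kemer's representability theory via Grassmann envelopes, or a Shirshov-height/trace-identity argument) and explicitly flag that neither is carried out. The paper instead proves Kemer's Capelli Theorem for affine algebras in characteristic $p$ by an elementary modular-representation argument: one shows (Theorem~\ref{sparse}) that $A$ satisfies a \emph{sparse} identity of bounded degree, using the fact that a suitable two-sided ideal $I_\lambda\subset F[S_n]$ lands inside $\Id(A)\cap V'_n$ once $\dim J_\lambda$ exceeds the codimension, where $\lambda$ is chosen to be a ``wide staircase'' so that Fayers' criterion guarantees the Specht module $J_\lambda$ is simple in characteristic $p$; and then one shows (Theorem~\ref{kemer.capelli}) that any affine algebra satisfying a sparse identity satisfies a Capelli identity, via the sparse reduction procedure and a counting argument on short words. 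Without something like this, the proposal has no proof of the key step and so does not establish the theorem.

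Additionally, the descent from the Noetherian base $C$ to residue fields is not the ``routine bookkeeping'' you describe of assembling nilpotence indices. When you tensor with the fraction field $F$ of $C/P$, a Capelli identity holding in $A\otimes_C F$ need only hold up to $C$-torsion in $A$: the paper finds $s\in C$ with $sI'=0$ where $I'$ is the $T$-ideal generated by the Capelli evaluations, passes to $A/sA$ over the proper quotient $C/sC$ by Noetherian induction, and then must \emph{recombine} the two Capelli identities via Lemma~\ref{Capprod} (using that $sA\cap I'$ is nilpotent). Tracking nilpotence indices across the finitely many minimal primes and the nilradical runs into the same torsion issue; the clean way through is precisely the combination of Capelli degrees in Lemma~\ref{Capprod}, which you would need to supply.
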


Note that to prove directly that $\operatorname{Jac}(A)$ is
nilpotent it is enough to prove Theorem~\ref{Braun} and show that
$\operatorname{Jac}(A)$ is nil, which is the case case when $A$ is
Jacobson, and is called the ``weak Nullstellensatz.'' But  the
weak Nullstellensatz requires some assumption on the base ring
$C$.
 It
can be proved without undue difficulty that $A$ is Jacobson when
$C$ is Jacobson, cf.~\cite[Theorem~4.4.5]{rowen1}. Thus, in this
case    the proper general formulation of the nilpotence of
$\operatorname{Jac}(A)$ is:

\begin{thm}[Braun]\label{weaknul}
If $A$ is an affine  PI-algebra over a Jacobson Noetherian base
ring, then $\operatorname{Jac}(A)$ is nilpotent.
\end{thm}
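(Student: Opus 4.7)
The plan is to combine two tools that the paper has already isolated: the weak Nullstellensatz (in its Amitsur form for Jacobson base rings) to conclude that $\operatorname{Jac}(A)$ is nil, and Theorem~\ref{Braun} to promote nil to nilpotent. So the statement follows immediately once both hypotheses are verified.

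More precisely, I would argue as follows. First, since $A$ is affine over the commutative Jacobson ring $C$, Amitsur's theorem (\cite[Theorem~5]{amitsur}, quoted in the introduction) applies and yields that $\operatorname{Jac}(A)$ is a nil ideal of $A$. (Equivalently, one can invoke \cite[Theorem~4.4.5]{rowen1}, which already says that $A$ itself is a Jacobson ring whenever $C$ is; together with Kaplansky's theorem, this forces every element of $\operatorname{Jac}(A)$ to be nil on any primitive image, and hence nil.) No use of the Noetherian hypothesis is needed for this step.

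Second, bring in Theorem~\ref{Braun}: because $C$ is Noetherian and $A$ is an affine PI-algebra over $C$, every nil ideal of $A$ is nilpotent. Apply this to the particular nil ideal $\operatorname{Jac}(A)$ produced in the previous step to conclude that $\operatorname{Jac}(A)$ is nilpotent, which is exactly the content of Theorem~\ref{weaknul}.

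The whole argument is formal given what has been quoted. The only genuinely hard input is Theorem~\ref{Braun}, whose proof is the real content of the paper; once that is granted, the main obstacle in the present statement is just checking that the two hypotheses (Jacobson and Noetherian) line up correctly so that both Amitsur's nil-radical theorem and Theorem~\ref{Braun} can be cited. Here they do: Jacobson gives ``nil,'' Noetherian gives ``nil $\Rightarrow$ nilpotent,'' and their combination gives ``nilpotent.''
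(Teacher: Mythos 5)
Your argument is exactly the one the paper gives (in the discussion following the statement): Amitsur's theorem or \cite[Theorem~4.4.5]{rowen1} applied to the Jacobson base ring shows $\operatorname{Jac}(A)$ is nil, and Theorem~\ref{Braun} then upgrades nil to nilpotent using the Noetherian hypothesis. The only quibble is that Kaplansky's theorem is not actually needed in your parenthetical alternative---once $A$ is Jacobson, $\operatorname{Jac}(A)$ is contained in every prime and hence in the lower nilradical directly---but this does not affect the main line of reasoning, which is correct.
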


  Small has pointed out that Theorems~\ref{Braun} and
~\ref{weaknul} actually are equivalent, in view of a trick of
\cite{RSS}. Indeed, as just pointed out, Theorem~\ref{Braun}
implies Theorem~\ref{weaknul}. Conversely, assuming
Theorem~\ref{weaknul}, one needs to show that
$\operatorname{Jac}(A)$ is nil. Modding out the nilradical, and
then passing to prime images, one may assume that $A$ is prime.
Then one embeds $A$ into the polynomial algebra $A[\lambda]$ over
the Noetherian ring $C[\lambda],$ and localizes at the monic
polynomials over $C[\lambda],$ yielding a Jacobson base ring by
\cite[Theorem~2.8]{RSS}.

\medskip
Braun's qualitative proof  was also presented in
\cite[Theorem~6.3.39]{rowen3}, and a detailed exposition, by
 L'vov~\cite{lvov}, is available in Russian. A sketch of Braun's proof  is
also given in
 \cite[Theorem~3.1.1]{amitsur.small}.

 \medskip
Meanwhile, Kemer~\cite{kemerp} proved:

\begin{thm}~\cite{kemerp}\label{kemersecond8}
If $A$ is a PI algebra (not
necessarily affine) over a field $F$ of characteristic $ p>0$, then $A$ satisfies some Capelli identity.
\end{thm}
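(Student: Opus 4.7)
The plan is to show that the $S_n$-cocharacters of the variety generated by $A$ are supported on partitions with at most $N$ rows for some fixed $N$, a condition equivalent to satisfaction of the Capelli identity $\Capl_{N+1}$. Since satisfying a Capelli identity is a T-ideal property preserved under extension of scalars between fields of the same characteristic, one may replace $A$ by the relatively free algebra of its variety and assume $F$ is infinite, which also makes multilinearization available.

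Next, invoke Regev's theorem to obtain exponential codimension growth $c_n(A)\le a^n$, and then the Amitsur--Regev--Latyshev hook theorem to conclude that the cocharacter multiplicities $m_\lambda$ of $A$ vanish outside some fixed hook $H(k,l)=\{\lambda : \lambda_{k+1}\le l\}$. In characteristic zero this is the sharpest possible conclusion, as witnessed by $M_k(E)$ with $E$ the Grassmann algebra---a PI algebra that satisfies no Capelli identity---so any proof of the theorem must genuinely exploit $\operatorname{char} F = p > 0$.

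The heart of the argument is a characteristic-$p$ step that trims the hook's long vertical leg. Using $p$-th power substitutions (Frobenius-type identities applied inside alternating Kemer-type polynomials), one shows that every cocharacter component indexed by a partition with more than some bounded number $N$ of rows can be rewritten using an identity whose Young diagram has strictly fewer rows; this uses only the multilinear part secured in the reduction step and the hook bound from the previous step. Iterating, the cocharacter support eventually lies entirely inside partitions with at most $N$ parts, and this translates directly into satisfaction of $\Capl_{N+1}$.

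The main obstacle is precisely this Frobenius trimming step. The construction must produce, from each ``long column'' alternating identity inside the hook, a new identity realizing honest row-reduction; this is where the characteristic-$p$ hypothesis is indispensable, since the theorem fails over $\mathbb Q$ for non-affine algebras such as $M_k(E)$. All other steps---reduction to the relatively free algebra, Regev's codimension bound, the hook theorem, and the translation between row-bounded cocharacter support and $\Capl_{N+1}$---are available in any characteristic and serve only to isolate the characteristic-$p$-specific core of the proof.
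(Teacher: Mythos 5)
First, a point of context: the paper does not actually prove Theorem~\ref{kemersecond8} as you state it. The statement concerns arbitrary (not necessarily affine) PI algebras in characteristic $p>0$, but the paper explicitly defers to \cite{kemerp} for that and remarks that the proof there is ``quite complicated.'' What \S\ref{charp} proves is only the affine case, by a route different from yours: it locates ``wide staircase'' partitions $T_u$ whose hook numbers are all prime to $p$ (so $J_\lambda$ is a simple $F[S_n]$-module by Fayers's theorem \cite{F}), compares $\dim J_\lambda$ with Regev's codimension bound to obtain a sparse identity, and then converts sparse to Capelli via the Shirshov-style combinatorics of Theorem~\ref{kemer.capelli} --- a final step that depends essentially on a finite generating set $a_1,\dots,a_r$ and so cannot cover the non-affine case at all.

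Your proposal does aim at the full non-affine statement, but it contains a genuine gap that you yourself flag. The entire content of the theorem is concentrated in what you call the ``Frobenius trimming step,'' and that step is only described (it ``must produce'' a row-reduced identity) rather than carried out. Since it is the only place your outline uses $\operatorname{char}(F)=p$, and since the statement is false over $\QQ$ (as you correctly note via $M_k(E)$), the plan proves nothing without it. There is also a difficulty upstream of it: the Amitsur--Regev hook theorem, and indeed the whole language of cocharacter multiplicities $m_\lambda$, rest on the semisimplicity of $F[S_n]$, which fails whenever $p\le n$. In characteristic $p$ the module $V_n/(\Id(A)\cap V_n)$ does not decompose into a direct sum of Specht modules with well-defined multiplicities, so ``the $m_\lambda$ vanish outside a hook'' is not a statement one can simply cite. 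The paper's affine argument works around precisely this obstacle by restricting to wide staircases (whose Specht modules are simple) and arguing with simple submodules rather than direct summands --- see Lemma~\ref{hook45}. Your steps 2--3 would need to be reworked along similar lines before the trimming step could even be formulated precisely, and the trimming step itself would still need to be constructed.
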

 Together with a characteristic-free proof of Razmyslov's theorem~\ref{thm.raz}
 due to Zubrilin~\cite{zubrilin.1}, Kemer's Theorems~\ref{kemerfirst8}
and~\ref{kemersecond8} yield another proof of the
Braun-Kemer-Razmyslov Theorem~\ref{BKR8}. The paper
\cite{zubrilin.1} is given in rather general circumstances, with
some non-standard terminology. Zubrilin's method was given in
\cite{BR}, although \cite[Remark~2.50]{BR} glosses over a key point
(given here as Lemma~\ref{len.01}), so a complete combinatoric proof
had not yet appeared in print with all the details. Furthermore,
full combinatoric details were provided in \cite{BR} only in
characteristic 0 because the conclusion of the proof required
Kemer's difficult Theorem~\ref{kemersecond8}. We need the special
case, which we call ``Kemer's Capelli Theorem,'' that every affine
PI-algebra $A$ over an arbitrary field  satisfies some Capelli
identity. This can be proved in two steps: First, that $A$ satisfies
a ``sparse'' identity, and then a formal argument that every sparse
identity implies a Capelli identity. The version given here
(Theorem~\ref{sparse}) uses the representation theory of the
symmetric group $S_{n}$, and provides a reasonable quartic bound
($(p-1)p\binom{u+1}2,$ where $u = \frac {2pe (d-1)^2}3  $) for the
degree of the sparse identity of $A$ in terms of the degree $d$ of
the given PI of $A$.

It should be noted that every proof that we have cited of the
Braun-Kemer-Razmyslov Theorem ultimately utilizes an idea of
Razmyslov defining a module structure on generalized polynomials
with coefficients in the base ring, but we cannot locate full
details of its implementation anywhere in the literature. One of the
objectives of this paper is to provide these details, in
\S\ref{last.1} and \S\ref{S2.6}. Although the proof is rather
intricate for a general expository paper, we feel that the community
deserves the opportunity to see the complete argument in print.

\medskip

%%%%%%%%%%%%%%%%%

We emphasize the combinatoric approach here.  Aside from the
intrinsic interest in having such a proof available of this
important theorem (and characteristic-free), these methods
generalize easily to nonassociative algebras, and   we plan to use
this approach as a framework for the nonassociative PI-theory, as
initiated by Zubrilin. (The proofs are nearly the same, but the
statements are somewhat more complicated. See \cite{BBRY} for a
clarification of Zubrilin's work in the nonassociative case.)
 To keep this
exposition as readable as we can, we emphasize the case where the
base ring $C$ is a field and prove Theorem~\ref{BKR8} directly by an
induction argument without subdividing it into Theorem~\ref{Braun}
and the weak Nullstellensatz, although we also treat these general
cases.

\S\ref{Razm} follows Zubrilin's short paper~\cite{zubrilin.1}, and
gives full details of Zubrilin's proof of Razmyslov's
theorem~\ref{thm.raz}. This self-contained proof is characteristic
free.

 To complete the proof of the BKR Theorem, it remains to prove Kemer's Capelli
 Theorem.
%for affine algebras over an arbitrary commutative Noetherian ring.
In \S\ref{section.strong} we provide the proof in characteristic~0,
by means of Young diagrams, and \S\ref{charp} contains the
characteristic $p$ analog (for affine algebras). An alternative
proof could be had by taking the second author's  ``pumping
 procedure'' which
he developed to answer Specht's question in characteristic $p$, and
applying it to the ``identity of algebraicity''
\cite[Proposition~1.59]{BR}. We chose the representation-theoretic
approach since it might be more familiar to a wider audience. The
proof of Theorem~\ref{Braun}, over arbitrary commutative Noetherian
rings, is given in \S\ref{Noeth}.
\medskip

%\subsubsection{Some history of the BKR Theorem} $ $

%\section{A theorem of Razmyslov}\label{Razm}

%{\bf Theorem~\ref{thm.raz} was first
%proved, in characteristic zero, by Razmyslov~\cite{razmyslov3}. It
%was later proved by Braun~\cite{braun} in any characteristic. (PLEASE DELETE THIS SENTENCE, OTHERWISE
%THE HISTORY OF THE BKR THEOREM WOULD HAVE TO BE DISCUSSED AGAIN) }

\subsection{Structure of the proof}\label{structure}
 We
assume that $A$ is an affine   $C$-algebra and satisfies the $n+1$
Capelli identity $\Capl_{n+1}$ (but not necessarily the
$n$~Capelli identity $\Capl_{n}$), and we induct on~$n$: if such
$A$ satisfies $\Capl_{n}$ then we assume that
$\operatorname{Jac}(A)$ is nilpotent, and we prove this for
$\Capl_{n+1}$. For the purposes of this sketch, in Steps 1 through
3 and Step 7 we assume that $C$ is a field $F$.

The same argument shows that any nil ideal  $N$ of an affine algebra
$A$ over a Noetherian ring is nilpotent, yielding
Theorem~\ref{thm.raz1}. For this result we would replace
$\operatorname{Jac}(A)$ by $N$ throughout our sketch.

We write $C\{ x,y ,t  \}$ for the \textbf{free (associative)
algebra} over the base ring $C$, with indeterminates
$x_i,y_j,t_k,z$, containing one extra indeterminate $z$ for further
use. This is a free module over~$C$, whose basis is the set of
\textbf{words}, i.e., formal strings of the letters $x_i,y_j,t_k,z$.
The $x$ and $y$ indeterminates play a special role and need to be
treated separately. We write $C\{ t \}$ for the free subalgebra
generated by the $t_k$ and $z$, omitting the $x$ and $y$
indeterminates.

\medskip
1. The induction starts with $n=1$. Then $n+1=2$, and any algebra
satisfying $\Capl_2$ is commutative. We therefore need to show
that if $A$  is commutative affine  over a field $F$,  then
$\operatorname{Jac}(A)$ is nilpotent. This classical case is
reviewed in \S\ref{rep.1}.

\medskip
2.  Next is the {\it finite} case: If $A$ is affine   over a field
$F$ and a finite module over an affine central subalgebra, then
$\operatorname{Jac}(A)$ is nilpotent. This case was known well
before Razmyslov's Theorem, and is  reviewed in \S\ref{rep.11}.
Theorem~\ref{thm.raz1} follows whenever $A$ is a subring of a
finite dimensional algebra over a field.

\medskip
3. Let $\CAP_n=T(\Capl_n)$ be the $T$-ideal generated by
$\Capl_n$, and let $\mathcal{CAP}_n(A)\subseteq A$ be the ideal
generated in $A$ by the evaluations of $\Capl_n$ on $A$, so
$A/{\mathcal{CAP}_n(A)}$ satisfies $\Capl_n$. Therefore, by
induction on $n$, $\operatorname{Jac}(A/{\mathcal{CAP}_n(A)})$ is
nilpotent. Hence there exists $q$ such that
$$
\operatorname{Jac}(A)^q\subseteq
\mathcal{CAP}_n(A),\quad\mbox{so}\quad
\operatorname{Jac}(A)^{2q}\subseteq (\mathcal{CAP}_n(A))^2.
$$

\medskip

4.   ~In \S\ref{rep.2} we work over an arbitrary base ring $C$
(which need not even be Noetherian), and for any algebra $A$
introduce the ideal $I_{n,A} \subset A[\xi_{n,A}]$, for commuting
indeterminates~$\xi_{n,A}$, which provides ``generic'' integrality
relations for elements of $A$. Let $\widehat {C\{ x,y ,t \} }:=
C\{x,y,t\}/\mathcal{CAP}_{n+1},$ the relatively free algebra of
$\Capl_{n+1}$.  Taking the ``doubly alternating'' polynomial
$$f= t_1 \Capl_n( x_1,\ldots, x_n)t_2 \Capl_n( y_1,\ldots, y_n) t_3,$$
we construct, in Section~\ref{module.M(g)}, the key $ \widehat {C\{
 t  \}}$-module $\MM \subset \widehat {C\{ x,y ,t  \}}$, which
contains the polynomial $\hat f$. A combinatoric argument given in
Proposition~\ref{sof.1} applied to $\widehat {C\{ x,y ,t  \}}$
(together with substitutions) shows that $I_{n,\widehat {C\{ x,y ,t
\}}}\cdot \MM=0$.
\medskip

5.  ~We introduce the \textbf{obstruction to integrality}
$\Obst_n(A)= A\cap I_{n,A} \subset A$  and
 show that
$A/{\Obst_n(A)}$ can be embedded into a  finite algebra over an
affine central $F$-subalgebra; hence
$\operatorname{Jac}(A/{\Obst_n(A)})$ is nilpotent. This implies
that there exists $m$ such that
$$
\operatorname{Jac}(A)^m\subseteq \Obst_n(A).
$$
The proof of this step applies Shirshov's Height Theorem
\cite{shirshov2}, \cite[Theorem~2.3]{BR}.

\medskip
6. We   prove that $\Obst_n(A)\cdot (\mathcal{CAP}_n(A))^2=0$ over
an arbitrary ring $C$. This is obtained from Step 4 via a
sophisticated specialization argument involving \textbf{free
products}.

\medskip
7. We put the pieces together. When $C$ is a field, Step 3 shows
that $\operatorname{Jac}(A)^{q}\subseteq \mathcal{CAP}_n(A)$ for
some $q$, and Step~5 shows that
 $ \operatorname{Jac}(A)^{m}\subseteq \Obst_n(A)$ for some $m$. Hence
$$
\operatorname{Jac}(A)^{2q+m}\subseteq \Obst_n(A)\cdot
(\mathcal{CAP}_n(A))^2=0,
$$
which completes the proof of Theorem~\ref{thm.raz}. When $C$ is
Noetherian, any nil ideal $N$ of $C$ is nilpotent, so the
analogous argument shows that
 $ N^{m}\subseteq \Obst_n(A)$ for some $m$. Hence
$$
N^{2q+m}\subseteq \Obst_n(A)\cdot (\mathcal{CAP}_n(A))^2=0,
$$
proving Theorem~\ref{thm.raz1}.
\medskip

\subsection{Special cases}

We need some classical special cases.

\subsubsection{The commutative case}\label{rep.1}$ $

%{\bf (By Rowen.  Please check the questions and remarks [1]-[8] at the end of this section.)}

Our main objective is to prove that the Jacobson radical
$\operatorname{Jac}(A)$ of an affine   PI-algebra~$A$ (over a
field) is nilpotent. We start with the classical case for which
$A$ is commutative.

\begin{rem}\label{quest.1} Any commutative affine algebra $A$ over a Noetherian base ring $C$ is
Noetherian, by Hilbert's Basis Theorem, and hence the intersection
of its prime ideals is nilpotent,
cf.~\cite[Theorem~16.24]{rowen5}.

But for any ideal $I \triangleleft A,$ the algebra $A/I$ is also
Noetherian, so the intersection of the prime ideals of $A$
containing $I$ is nilpotent modulo $I$.

\medskip

\end{rem}

%
%
%This innocent-looking assertion has some far-reaching
%consequences, as we shall see soon.
%
%\begin{cor} Any semiprime\index{semiprime algebra}  PI-algebra $A$ affine \index{affine} over a commutative  \index{Noetherian} Noetherian
%ring is ble.\index{representable!algebra}
%\end{cor}
%\begin{proof}
%By Proposition \ref{afre}, $A\subseteq M_n(H)$, which is
%\index{Noetherian}Noetherian, and thus, satisfies \index{ACC(ann)}
%ACC(ann), so this condition descends to the subalgebra $A,$ which
%thus, is representable.\index{representable!algebra}
%\end{proof}

\begin{prop}\label{afre1}  If $H$ is a commutative affine
algebra over a field, then  $\operatorname{Jac}(H)$  is
nilpotent.\end{prop}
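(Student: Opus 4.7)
The plan is to combine two classical ingredients cited in the excerpt: Hilbert's Basis Theorem and the weak Nullstellensatz (Proposition~6.37 of \cite{rowen4}), exactly as the paragraphs preceding the statement hint.

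First, I would observe that since $H$ is affine over a field $F$, Hilbert's Basis Theorem implies that $H$ is Noetherian (this is the commutative affine case of Remark~\ref{quest.1}). Consequently the nilradical $\operatorname{Nil}(H)$, being the intersection of the prime ideals of $H$, is finitely generated, and any finite generating set consists of nilpotent elements; a standard argument then shows that $\operatorname{Nil}(H)^N = 0$ for some $N$ (take $N$ to be a common exponent of nilpotence of the generators times the number of generators). So it suffices to prove the containment $\operatorname{Jac}(H) \subseteq \operatorname{Nil}(H)$.

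Next, I would invoke the weak Nullstellensatz in the form cited by the authors: for a commutative affine algebra $H$ over a field, every prime ideal of $H$ is the intersection of the maximal ideals containing it, i.e.\ $H$ is a Jacobson ring. In particular the nilradical coincides with the Jacobson radical:
\[
\operatorname{Jac}(H) \;=\; \bigcap_{\mathfrak{m}\ \text{maximal}} \mathfrak{m} \;=\; \bigcap_{\mathfrak{p}\ \text{prime}} \mathfrak{p} \;=\; \operatorname{Nil}(H).
\]
Combining this with the previous paragraph yields $\operatorname{Jac}(H)^N = \operatorname{Nil}(H)^N = 0$, as required.

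There is no real obstacle here: both ingredients are classical and are explicitly available in the paper's references. The only delicate point is the weak Nullstellensatz itself, but the authors have already flagged this as a black box (\cite[Proposition~6.37]{rowen4}). So the proof is essentially a two-line citation: Hilbert plus Nullstellensatz, yielding $\operatorname{Jac}(H)=\operatorname{Nil}(H)$ with the latter nilpotent by Noetherianity.
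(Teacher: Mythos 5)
Your proof is correct and follows essentially the same route as the paper's: invoke the weak Nullstellensatz to identify $\operatorname{Jac}(H)$ with the nilradical, then use Noetherianity (Hilbert's Basis Theorem) to conclude the nilradical is nilpotent. The only difference is that you spell out the elementary argument for nilpotence of the nilradical in a Noetherian ring, whereas the paper delegates that to Remark~\ref{quest.1}.
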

\begin{proof}
 The ``weak Nullstellensatz''  \cite[Proposition~6.37]{rowen4}
says that $H$ is Jacobson, and thus the Jacobson radical
$\operatorname{Jac}(H)$ is contained in the intersection of the
prime ideals of $H$. But any commutative affine algebra is
Noetherian, so we conclude with Remark~\ref{quest.1}.
\end{proof}

\subsubsection{The finite case}\label{rep.11}$ $
To extend this to noncommutative algebras, we start with some other
classical results:
\begin{enumerate}
\item \cite[Theorem~15.23]{rowen5} (Wedderburn) Any nil subring of
an $n\times n$  matrix algebra over a field  is nilpotent, of
nilpotence index $\le n$ (in view of \cite[Lemma~15.22]{rowen5}).

\item \cite[Theorem~15.18]{rowen5} (Jacobson) The Jacobson radical
of an $n$-dimensional algebra over a field  is nilpotent, and thus
has nilpotence index $\le n,$ by (1).

\item Any algebra finite over a Noetherian central subring $C$, is
Noetherian (This follows at once from induction applied to
\cite[Proposition~7.5]{rowen4}.

\end{enumerate}

\begin{theorem} \label{nilrep1} Suppose $A= C\{ a_1, \dots, a_\ell\}$ is an  affine algebra
  over a commutative  Noetherian
ring $C$, with $A \subseteq M_n(K)$ for a suitable commutative
$C$-algebra $K$. Then

\begin{enumerate}
\item Any nil   subalgebra $N$ of $A$ is nilpotent, of bounded
nilpotence index $\le mn$, where $m$ is given in the proof. When
$K$ is reduced, i.e., without nonzero nilpotent elements, then
$m=1$, so $N^n = 0.$

\item If $C$ is a field, then $\operatorname{Jac}(A)$ is
nilpotent.
\end{enumerate}
\end{theorem}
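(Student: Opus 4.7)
The plan is to reduce both parts to the three classical facts stated just before the theorem. The key technical step is to replace the (possibly very large) coefficient ring $K$ by an affine Noetherian commutative subring whose nilradical can be controlled. Specifically, I would let $K_0 \subseteq K$ be the commutative $C$-subalgebra generated by the (finitely many) matrix entries of $a_1, \dots, a_\ell$. Then $K_0$ is an affine $C$-algebra, so by Hilbert's Basis Theorem (item~(3) above) it is Noetherian, and $A \subseteq M_n(K_0)$. Its nilradical $N_0 \subseteq K_0$ is a finitely generated ideal of nilpotents, and so $N_0^m = 0$ for some integer $m$; this is the $m$ promised in the statement, with $m=1$ exactly when $K$ is reduced.

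For part~(1), I would first establish the inclusion $N^n \subseteq M_n(N_0)$ by passing to the reduced Noetherian quotient $K_0/N_0$. This quotient has finitely many minimal primes $\mathfrak p_1, \dots, \mathfrak p_r$ whose intersection is $0$, which gives an embedding $K_0/N_0 \hookrightarrow \prod_{i=1}^r F_i$ with $F_i := \operatorname{Frac}(K_0/\mathfrak p_i)$ a field, and hence $M_n(K_0/N_0) \hookrightarrow \prod_i M_n(F_i)$. The projection of $N$ to each $M_n(F_i)$ is a nil subalgebra of a matrix algebra over a field, so Wedderburn's theorem (item~(1) above) gives it nilpotence index $\le n$, killing the image of $N^n$ in every factor; this is precisely $N^n \subseteq M_n(N_0)$. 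Finally $(M_n(N_0))^m \subseteq M_n(N_0^m) = 0$, so $N^{mn} = (N^n)^m = 0$, yielding the claimed bound. In the reduced case $N_0 = 0$ and the same argument gives $N^n = 0$ directly.

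For part~(2), the embedding $A \subseteq M_n(K)$ makes $A$ a PI-algebra, and since $C$ is a field (hence a Jacobson ring), Amitsur's theorem cited in the introduction tells us that $\operatorname{Jac}(A)$ is nil; applying part~(1) to the nil ideal $N = \operatorname{Jac}(A)$ then gives the desired nilpotence.

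The hard part will be the non-reduced case of part~(1): if $K$ has nilpotent elements the naive ``pass to a field'' argument does not immediately kill $N$, and one has to exploit that $K_0$ inherits Noetherianness from $C$ in order to know that $N_0$ itself is nilpotent — this is what boosts the uniform bound from $n$ (the reduced case) to $mn$.
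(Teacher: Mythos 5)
Your part~(1) is essentially the paper's own argument: the paper also replaces $K$ by the affine $C$-subalgebra $H$ generated by the matrix entries of the $a_k$, invokes Noetherianity of $H$, reduces modulo primes of $H$ to apply Wedderburn over the fraction field, and concludes $N^n \subseteq M_n(U)$ where $U$ is the nilradical of $H$ with $U^m=0$; your variant of passing to $K_0/N_0$ and its finitely many minimal primes is a cosmetic reorganization of the same idea. Part~(2), however, is where you diverge. You note that $A$ is PI and invoke Amitsur's theorem (quoted in the introduction) that $\operatorname{Jac}(A)$ is nil when $A$ is affine over a Jacobson ring, and then apply part~(1) to that nil ideal. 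This is correct, but it outsources the ``nil'' step to a substantial external result. The paper instead proves part~(2) more directly and self-containedly: it first replaces $H$ by $H/\operatorname{Jac}(H)$ (using that $M_n(\operatorname{Jac}(H))=\operatorname{Jac}(M_n(H))$ is nilpotent by Proposition~\ref{afre1}, so modding it out doesn't affect nilpotence of $\operatorname{Jac}(A)$), and then, for each maximal ideal $P$ of $H$, uses the weak Nullstellensatz to see $H/P$ is a finite field extension of $C$, hence the image of $A$ in $M_n(H/P)$ is finite-dimensional, so $\bar J^n=0$ by Jacobson's theorem; intersecting over all maximal $P$ gives $J^n=0$. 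The paper's route needs only the commutative Nullstellensatz and the finite-dimensional case (both already proved or elementary), while yours is shorter but leans on Amitsur's theorem as a black box — a trade-off worth being explicit about in an exposition whose stated goal is to be essentially self-contained.
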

\begin{proof} For each  $1 \le k \le \ell,$ write each $a_k$ as an $n\times n$ matrix $(a_{ij}^{(k)}),$ for $a_{ij}^{(k)} \in K$,
and let $H$ be the commutative $C$-subalgebra of $K$ generated by
these finitely many $a_{ij}^{(k)}$; then $H$ is $C$-affine. We can view each $a_k $ in $M_n(H)$, so  $A \subseteq
M_n(H)$.

\medskip
(1) Let $N\subseteq A$ be a nil subalgebra. Now $A\subseteq M_n(H)$, so $N\subseteq M_n(H)$ and is nil.
Let $P\subseteq H$ be prime. The homomorphism $H\to H/P$ extends to
$$
M_n(H)\to M_n(H/P)~~(\cong M_n(H)/M_n(P)).
$$
Let $\bar N$ be the image of $N$, so $\bar N =(N+M_n(P))/M_n(P)$ so
$\bar N \subseteq   M_n(H)/  M_n( P) \cong M_n(H/P)\subseteq M_n(L)$
where $L$ is the field of fractions of the domain $H/P$. By
Wedderburn's theorem $\bar N^n=0$ which implies that $N^n\subseteq
M_n(P)$ (since $P=0$ in $H/P$ and in~$L$).
%\bigskip
% (1)  Given any prime ideal $P$ of~$H$, let $L$ be
% the field of fractions of  the integral domain  $H/P$.
%Any nil subalgebra $N\subset M_n(L)$  is nilpotent of index $n$, implying
%that $N^n=0$ in $M_n(L)$. Now, $P$ is  zero in $H/P$,  and hence also $P=0$ in $L$; therefore
%$M_n(P)=0$ in $M_n(L)$, which implies that $N^n\subset M_n(P)$.
%%$((N+M_n(P))/M_n(P))^n=0$ in $M_n(H)/M_n(P)=M_n(H/P)\subset
%%M_n(K)$, or $N^n \subset M_n(P)$.
%
 Hence, letting $U$ denote
the prime radical of $H$, we have $N^n  \subseteq M_n(U)$. But, in
view of Remark~\ref{quest.1}, we have $U ^m = 0$ for some~$m.$ (If
$K$ is reduced then $U=0$, implying $m=1.$) We conclude that
$$N^{mn} = (N^n)^m  \subseteq (M_n(U))^m =M_n(U^m)= 0.$$

\medskip

 (2)
 We need here the well-known fact \cite[Exercise~15.28]{rowen5} that  when  $ J \triangleleft A,$ with  $J$  nilpotent, then
$ \operatorname{Jac}(A/J)= \operatorname{Jac}(A)/J$. It follows at
once that if $ \operatorname{Jac}(A/J)$  is nilpotent, then
$\operatorname{Jac}(A) $ is nilpotent.

 \medskip
 By hypothesis $H$ is affine over the field $C$, so $\operatorname{Jac}(H)$ is nilpotent,
 and thus
  $M_n(\operatorname{Jac}(H)) = \operatorname{Jac}(M_n(H))$ is
  nilpotent.
%Passing to
Let $\tilde A=A/(A\cap M_n(\operatorname{Jac}(H)))$ and $\tilde
H=H/\operatorname{Jac}(H)$. Then
$$
%A/(A\cap M_n(\operatorname{Jac}(H)))
\tilde A \subseteq M_n(H/\operatorname{Jac}(H))=M_n(\tilde H),
$$
and $\operatorname{Jac}(\tilde H)=0$. Thus we may assume that
$\operatorname{Jac}(H) = 0,$ and we shall prove that $J^n = 0,$
where $J = \operatorname{Jac}(A).$

\medskip
For any maximal ideal $P$ of $H$, we see that $H/P$ is an affine
field extension of $C$, and thus is finite dimensional ~over $C$,
by  \cite[Theorem~5.11]{rowen4}. But then the image of $A$ in
$M_n(H/P)$ is finite dimensional over~$C$, so the image $\bar J$
of $J$ is nilpotent,   implying $\bar J^n  = 0$. Hence   $J^n$ is
contained in $\cap M_n(P) = M_n(\cap P) = 0,$ where $P$ runs over
the maximal ideals of~$H$.
\end{proof}

\begin{theorem} \label{nilrep3} Suppose $A$ is an algebra that is finite
 over $C$, itself an affine algebra  over a field. Then
$\operatorname{Jac}(A)$ is nilpotent.\end{theorem}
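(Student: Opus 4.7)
My plan is to reduce to the case where $A$ is prime and then embed $A$ into a matrix algebra over a field, so that Theorem~\ref{nilrep1}(2) applies. First, since $C$ is affine over a field $F$, it is Noetherian by the Hilbert basis theorem; and since $A$ is module-finite over $C$, $A$ is itself Noetherian as a ring, and $F$-affine (one combines the $F$-algebra generators of $C$ with a set of $C$-module generators of $A$). In any Noetherian ring the nilradical is nilpotent and equals the intersection of the finitely many minimal primes $P_1,\ldots,P_r$. Moreover the image of $\operatorname{Jac}(A)$ in $A/P_i$ always lies in $\operatorname{Jac}(A/P_i)$, since the maximal ideals of $A/P_i$ correspond to those of $A$ containing $P_i$. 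So if each $\operatorname{Jac}(A/P_i)$ is nilpotent then so is $\operatorname{Jac}(A)$; and each quotient $A/P_i$ remains prime and finite over the image of $C$, which is still affine over $F$. I may therefore assume $A$ is prime.

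With $A$ prime, let $Z$ denote the image of $C$ in $A$: a central $F$-affine subring over which $A$ is module-finite. In any prime ring a nonzero central element $z$ is regular, for $za=0$ forces $zAa=Aza=0$, whence $z=0$ or $a=0$ by primeness. Thus $Z$ is a commutative domain and $A$ is torsion-free as a $Z$-module. Setting $K=\operatorname{Frac}(Z)$, torsion-freeness makes the natural map $A\to A\otimes_Z K$ injective, while $A\otimes_Z K$ is a finite-dimensional $K$-algebra of some dimension $k$. The left regular representation then embeds $A\otimes_Z K\hookrightarrow\operatorname{End}_K(A\otimes_Z K)\cong M_k(K)$, so altogether $A\subseteq M_k(K)$, with $K$ a commutative $F$-algebra.

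Now Theorem~\ref{nilrep1}(2) applies with base ring the field $F$: $A$ is $F$-affine and embeds in $M_k(K)$ for a commutative $F$-algebra $K$, hence $\operatorname{Jac}(A)$ is nilpotent; combined with the prime reduction this completes the proof. The main obstacle is constructing the matrix embedding in the middle paragraph, which rests crucially on the fact that in a prime ring the central subring inherited from $C$ is automatically a domain over which $A$ is torsion-free; this is what makes passage to the fraction field injective and reduces the problem to the matrix-embedded case already handled by Theorem~\ref{nilrep1}.
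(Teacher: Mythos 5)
Your proof is correct and follows essentially the same route as the paper: reduce modulo the nilpotent nilradical to the semiprime case, pass to the finitely many minimal primes, and in the prime case embed $A$ into matrices over the fraction field of the central affine domain so that Theorem~\ref{nilrep1}(2) applies. The only difference is one of precision: where the paper compresses the last step into ``localizing over the center, we may assume $C$ is a field,'' you spell out that the image $Z$ of $C$ in the prime quotient is a domain, that $A$ is $Z$-torsion-free so $A\hookrightarrow A\otimes_Z\operatorname{Frac}(Z)$, and that the regular representation then yields the needed matrix embedding over the commutative $F$-algebra $K=\operatorname{Frac}(Z)$ — which is exactly the content the paper's shorthand is relying on.
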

\begin{proof} Since $A$ is Noetherian, its nilradical $N$ is nilpotent by \cite[Remark~16.30(ii)]{rowen5}, so modding
out $N$ we may assume that $A$ is semiprime, and thus the
subdirect product of prime algebras $\{ A_ i = A/P_i: i \in I \}$
finite over their centers.  If $\operatorname{Jac}(A)^n \subset
P_i$ for each $i\in I$,   then $\operatorname{Jac}(A)^n \subset
\cap  P_i = 0.$

So
we may assume that $A$ is prime. But localizing over the center,
%we may assume by \cite[Corollary~1.6.30]{rowen1} that $A$ is central simple, see that $A$ is embedded into $n \times n$ matrices over a
we may assume that $C$ is a field. Let $n = \dim _C A.$ Then $A$ is
embedded via the regular representation into $n \times n$ matrices
over a field, and we are done by Theorem~\ref{nilrep1}.
\end{proof}

Since not every affine PI-algebra might satisfy the hypotheses of
Theorem~\ref{nilrep1},  cf.~\cite{sm} and \cite{lew}, we must
proceed further.

\section{Proof of Razmyslov's Theorem}\label{Razm}

In this section we give full details for Zubrilin's proof of
Theorem~\ref{thm.raz}.

%{\bf [1] For the "Ring-Theoretic-Non-Expert", could you explain how, in the commutative case,
%"Noetherian" implies the above nilpotency.}
%
%\medskip
%{\bf [2] you mean nilpotent modulo $I$?}
%
%\medskip
%{\bf [3] Wrong reference? "Noncommutative View" is Vol 91, not Vol 73.}
%
%\medskip
%{\bf [4] At the moment I do not understand the proof of this Proposition~\ref{afre1}. Is it related to
%Remark~\ref{quest.1} ?}
%
%\medskip
%{\bf [5] The statement about "nilpotence index $\le n$" does not appear in the above reference \cite[Theorem~15.23]{rowen5}.}
%
%\medskip
%{\bf [6] What is $(i)$ here? Also, what is the
%relation between $m$ and $n$? Is $m=n$?}
%
%\medskip
%{\bf [7] This statement "of bounded nilpotence index" is not clear, since no information about
%the bound is given.}
%
%\medskip
%{\bf [8] I guess $H$ should be $K$ here. Also below, I am a bit confused of what is $N$ ?}
%

%\newpage

\subsection{Zubrilin's approach} $ $
\subsubsection{The operator $\delta_{k,z}^{(x,n)}$}\label{Zu1}$ $

Let us fix notation for the next few sections. $C$ is an arbitrary
commutative ring. We start with a
  polynomial $f: = f(x_1,\ldots,
x_{n})\in C\{ x, y,  t\}$ in $\vec x = \{x_1,\ldots, x_{n }\}$
(which we always notate), as well as possibly $\vec y =
\{y_1,\ldots, y_{n }\}$ (which we sometimes notate), and
   $\vec t = \{t_1, \dots\}$, all of which are  noncommutative indeterminates.

\begin{defn}\label{D2.1}
Let $f(\vec x , \vec y,  \vec t)$ be multilinear in the $x_i$ (and
perhaps involving additional indeterminates $\vec{y}$ and
$\vec{t}$).   Take $0\le k\le n$, and expand
$$
f^*=f((z+1)x_1,\ldots,(z+1)x_n,\vec y, \vec t),
$$
where  $z$ is a new noncommutative indeterminate. Then we write
$$\delta^{(x,n)}_{k,z}(f) := \delta^{(x,n)}_{k,z}(f)(x_1,\ldots,
x_{n},z)$$ for the homogeneous component of $f^*$ of degree $k$ in
the noncommutative indeterminate $z$. (We have suppressed
 $\vec{y}$, $\vec{t}$ in the notation, as indicated above.)

 For example let $n=2$ and $f=x_1x_2$. Then
$$
(z+1)x_1(z+1)x_2=zx_1zx_2+zx_1x_2+x_1zx_2+x_1x_2.
$$
Hence $ \delta^{(x,2)}_{0,z}(x_1x_2)=x_1x_2,$ $~
\delta^{(x,2)}_{1,z}(x_1x_2)=zx_1x_2+x_1zx_2,$ and  $~
\delta^{(x,2)}_{2,z}(x_1x_2)=zx_1zx_2$.

\medskip
More generally, for any $h \in C\{ t  \}$ we write
$\delta^{(x,n)}_{k,h}(f) := \delta^{(x,n)}_{k,h}(f)(x_1,\ldots,
x_{n},h)$, i.e., the specialization of $\delta^{(x,n)}_{k,z}(f) $
under $z \mapsto h.$
\end{defn}

\begin{remark}\label{finish.1}$ $

1. In calculating $\delta^{(x,n)} _{k,z}(f)$, the substitution
$x_i\to (z+1)x_i$ is applied to the first $n$ positions in $f$ but
not to the other positions. For example, the last (i.e.~$n+1$ st)
variable in $f(x_1,\ldots,x_{n-1},x_{n+1},x_n)$ is $x_n$, not
$x_{n+1}$.
%%%and here $\vec x$ denotes $\{ x_1,\ldots,x_{n-1},x_{n+1} \}$.
Hence,
to calculate $\delta_{k,z}^{(x,n)}
(f(x_1,\ldots,x_{n-1},x_{n+1},x_n))$ we apply $x_i\to (z+1)x_i$ to
all $x_i$'s except $x_n$.

\medskip
2.~We can also write
$$
\delta_{k,z}^{(x,n)}(f(x_1,\ldots,x_n,\vec t))=\sum_{1\le
i_1<\cdots <i_k\le n}f(x_1,\ldots,x_{n }, \vec t)\mid_{x_{i_j}\to
z{x_{i_j}}}=~~~~~~~~~~~~~~~~~~~~~~~~~~
$$
$$~~~~~~~~~~~~~~~~~~~~~~~~~~~~~~~~~
=\sum_{1\le i_1<\cdots <i_k\le n}f(x_1,\ldots,zx_{i_1},\ldots zx_{i_k} ,\ldots,x_{n}, \vec t).
$$

3. ~In case $f=f(x_1,\ldots,x_{n },y_1,\ldots, y_{n })$ also
involves indeterminates $y_1,\ldots, y_{n }$,  we still have
$$
\delta_{k,z}^{(x,n)}(f)=\sum_{1\le i_1<\cdots <i_k\le n}f\mid_{x_{i_j}\to z{x_{i_j}}},
$$
indicating that the other indeterminates $y_1,\ldots, y_{n }$
remain fixed. Analogously,
$$
\delta_{k,z}^{(y,n)}(f)=\sum_{1\le i_1<\cdots <i_k\le n}f\mid_{y_{i_j}\to z{y_{i_j}}},
$$
and the indeterminates $x_1,\ldots, x_{n }$ are fixed.

\end{remark}

\begin{defn}\label{alt1}
A   polynomial $f(x_1,\ldots, x_{n },\vec t\;)$   is
\textbf{alternating} in $x_1,\dots,x_n$ if  $f$ is multilinear in
the $x_i$ and
\begin{equation}\label{alt10} f(x_1,\ldots, x_i, \ldots, x_j,  \ldots, x_{n },\vec t\;)+f(x_1,\ldots, x_j, \ldots, x_i,  \ldots, x_{n },\vec t\;)
=0
 \text{ for all } i <j.
 \end{equation}
\end{defn}

A stronger definition, which would suffice for our purposes,  is
to require that
 \begin{equation}\label{alt2} f(x_1,\ldots, x_i, \ldots, x_i,  \ldots, x_{n },\vec t\;) =0;\end{equation}  i.e., we get 0 when specializing $x_j$ to $x_i$
 for any   $1 \le i <j \le n.$
We get  \eqref{alt1}  by linearizing~\eqref{alt2}, and can recover
\eqref{alt2} from \eqref{alt1} in   characteristic $\ne 2$.)

\begin{lem}\label{alter.1}
Let $f(x_1,\dots,x_n,\vec{t}\;)$ be multilinear and alternating in
$x_1,\dots,x_n$. % Let  $z$ be a new indeterminate.
 Then for
each~$0\le k\le n$,
$\delta_{k,z}^{(x,n)}(f(x_1,\dots,x_n,\vec{t}\;))$ is also
alternating  in $x_1,\ldots,x_n$.
\end{lem}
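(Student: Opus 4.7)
The plan is to use the explicit expansion from Remark~\ref{finish.1}(2) of $\delta_{k,z}^{(x,n)}(f)$ as a sum over $k$-element subsets, and then track how the transposition $x_a \leftrightarrow x_b$ permutes the summands. Concretely, I would write
$$
\delta_{k,z}^{(x,n)}(f) \;=\; \sum_{\substack{I\subseteq\{1,\dots,n\}\\ |I|=k}} f_I,
$$
where $f_I$ denotes the polynomial obtained from $f$ by the substitution $x_i\mapsto zx_i$ for every $i\in I$ (and leaving $x_j$ alone for $j\notin I$, as well as the other indeterminates $\vec t\;$).

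Next, fix any pair $a<b$ in $\{1,\dots,n\}$, and apply the swap $x_a\leftrightarrow x_b$ to each summand $f_I$. I would split the analysis into three cases according to the intersection $I\cap\{a,b\}$:
\begin{enumerate}
\item If $\{a,b\}\subseteq I$ or $\{a,b\}\cap I=\emptyset$, then positions $a$ and $b$ in $f_I$ carry arguments of the same shape ($zx_a,zx_b$ or $x_a,x_b$); the swap permutes these two arguments, and the alternating property of $f$ in positions $a$ and $b$ turns $f_I$ into $-f_I$.
\item If $a\in I$ and $b\notin I$, the swap carries $f_I$ to the polynomial with $zx_b$ in position $a$ and $x_a$ in position $b$; transposing these two arguments via the alternating property of $f$ yields $-f_{I'}$, where $I'=(I\setminus\{a\})\cup\{b\}$.
\item The case $b\in I$, $a\notin I$ is symmetric, producing $-f_{I''}$ with $I''=(I\setminus\{b\})\cup\{a\}$.
\end{enumerate}

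The final step is to notice that the assignment $I\mapsto I\triangle\{a,b\}$ (i.e.\ the involution that swaps the roles of $a$ and $b$ in $I$) is a bijection on $k$-subsets of $\{1,\dots,n\}$; it fixes the subsets of case~(1) and pairs up the subsets of cases~(2) and~(3). Summing the three cases gives
$$
\bigl(\delta_{k,z}^{(x,n)}(f)\bigr)\bigl|_{x_a\leftrightarrow x_b}
\;=\;\sum_I -f_{\sigma_{ab}(I)}
\;=\;-\sum_J f_J
\;=\;-\delta_{k,z}^{(x,n)}(f),
$$
which is exactly the alternating relation~\eqref{alt10} for $\delta_{k,z}^{(x,n)}(f)$.

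The only subtle point — and the step where I would be most careful — is the case analysis verifying that the alternating property of $f$ really produces the sign $-1$ in cases~(2) and~(3), where the two swapped arguments have different $z$-decorations; once one is convinced that the alternating identity operates on the positions of $f$ (and not on the literal symbols $x_a,x_b$), the pairing of subsets in those two cases is automatic, and there is no issue with double-counting. Multilinearity of $\delta_{k,z}^{(x,n)}(f)$ in $x_1,\dots,x_n$ is immediate from the fact that each substitution $x_i\mapsto zx_i$ preserves multilinearity, so the conclusion that $\delta_{k,z}^{(x,n)}(f)$ is alternating in $x_1,\dots,x_n$ follows directly.
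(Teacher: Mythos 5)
Your proof is correct, and it takes a genuinely different (and more hands-on) route than the paper. The paper's proof is a one-line generating-function trick: it sets $v=1+\varepsilon z$ for a central indeterminate $\varepsilon$, notes that $f(vx_1,\dots,vx_n,\vec t\,)$ is alternating because the substitution $x_i\mapsto vx_i$ preserves the alternating relation, and then reads off each $\delta_{k,z}^{(x,n)}(f)$ as the coefficient of $\varepsilon^k$ in that alternating polynomial — so each coefficient must itself be alternating, with no case analysis at all. Your argument instead expands $\delta_{k,z}^{(x,n)}(f)$ explicitly as $\sum_{|I|=k}f_I$ and tracks the action of the transposition $x_a\leftrightarrow x_b$ summand by summand, pairing $I$ with its image under the transposition acting on subsets; the three-case bookkeeping you carry out (both of $a,b$ in $I$, neither, or exactly one) is exactly right, and the key observation that alternation of $f$ acts on argument slots rather than on the literal symbols is the correct subtle point to flag. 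What the paper's approach buys is brevity and freedom from bookkeeping; what your approach buys is that it is completely elementary, makes the sign cancellation visible at the level of monomials, and does not require introducing an auxiliary commuting indeterminate. One small notational slip: you write the involution as $I\mapsto I\triangle\{a,b\}$, but symmetric difference changes $|I|$ by two when $\{a,b\}\subseteq I$ or $\{a,b\}\cap I=\emptyset$; what you actually use (and correctly describe in words as ``the involution that swaps the roles of $a$ and $b$ in $I$'') is the action of the transposition $(a\,b)$ on subsets, i.e.\ $I\mapsto\{(a\,b)(i):i\in I\}$, which fixes $I$ in the first case and exchanges the subsets of your cases (2) and (3). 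With that notation corrected the proof is complete.
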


\begin{proof}
Let $v=1+\varepsilon z$ where $\varepsilon$ is a central
indeterminate. Obviously $f(vx_1,\dots,vx_n,\vec{t}\;)$ is also
alternating in $x_1,\dots,x_n$. Since
$$
f(vx_1,\dots,vx_n,\vec{t}\;)=\sum_{k=0}^n
\left(\delta_{k,z}^{(x,n)}(f(x_1,\dots,x_n,\vec{t}\;)\right)\cdot\varepsilon^k
$$
is alternating in $x_1,\dots,x_n,$ it follows that each
$\delta_{k,z}^{(x,n)}(f(x_1,\dots,x_n,\vec{t})$ is alternating in
$x_1,\dots,x_n$.
\end{proof}

\begin{rem}\label{extr} $ $

1.  %  When $f$ is alternating, then so is the polynomial
%$\delta_{k,z}^{(x,n)}(f(x_1,\ldots,x_{n }, \vec t\;))$.
 Since
$\mathcal{CAP}_n$ is generated as a $T$-ideal by polynomials
alternating in $x_1, \dots, x_n$, we have
$$\delta_{k,z}^{(x,n)}(\mathcal{CAP}_n) \subseteq \mathcal{CAP}_n
\qquad \text{and} \qquad \delta_{k,z}^{(x,n)}(\mathcal{CAP}_{n+1})
\subseteq \mathcal{CAP}_{n+1}.$$

2. The results proved for the indeterminate $z$ specialize to an
arbitrary polynomial $h$, and thus can be formulated for  $h$.

\begin{lem}\label{extr1} The  $\delta^{(x,n)} _{k,z}(f)$-operator is functorial, in the sense that if $\vec a = (a_1, \dots, a_m) \in A$ and $h(\vec a) = h'(\vec a)$, then
$\delta^{(x,n)} _{k,h}(f)(\vec a) = \delta^{(x,n)} _{k,h'}(f)(\vec
a).$
\end{lem}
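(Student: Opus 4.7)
The plan is to reduce the claim to the elementary fact that polynomial evaluation on $A$ is a $C$-algebra homomorphism from the free algebra, and therefore commutes with any prior substitution performed on the free algebra. There is essentially no combinatorics to do: the whole content is a bookkeeping statement about composition of substitutions.

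First, I would fix notation by setting $g(\vec x,z,\vec t) := \delta^{(x,n)}_{k,z}(f) \in C\{\vec x, z, \vec t\}$, treating $z$ as a fresh noncommutative indeterminate, disjoint from the indeterminates appearing in $f$, $h$, and $h'$ (which we may always arrange by renaming). By Definition~\ref{D2.1}, the polynomial $\delta^{(x,n)}_{k,h}(f)$ is precisely $g(\vec x, h, \vec t)$, obtained from $g$ by the single substitution $z \mapsto h$. Likewise, $\delta^{(x,n)}_{k,h'}(f) = g(\vec x, h', \vec t)$.

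Next, I would evaluate at $\vec a$. Since the assignment of the free generators to the entries of $\vec a$ extends to a $C$-algebra homomorphism $C\{\vec x, \vec t\} \to A$, it commutes with the earlier substitution $z \mapsto h$ performed on the free algebra; equivalently, plugging in $h$ and then evaluating at $\vec a$ gives the same element of $A$ as first evaluating all of $g$'s indeterminates at $\vec a$ and then replacing $z$ by the element $h(\vec a) \in A$. Symbolically,
\[
\delta^{(x,n)}_{k,h}(f)(\vec a) \;=\; g(\vec a)\bigl|_{z \,=\, h(\vec a)}, \qquad \delta^{(x,n)}_{k,h'}(f)(\vec a) \;=\; g(\vec a)\bigl|_{z \,=\, h'(\vec a)}.
\]
Since $h(\vec a) = h'(\vec a)$ by hypothesis, the two right-hand sides are identical, which is exactly the claim.

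I expect no real obstacle; the only care required is the bookkeeping of overlapping indeterminates between $h$, $h'$, $f$, and the tuple $\vec a$, and this is disposed of by choosing $z$ to be a fresh symbol so that the substitution $z \mapsto h$ is unambiguous. Conceptually the lemma is nothing more than the associativity of substitution, i.e.\ that a morphism out of the free algebra is determined by where it sends the generators.
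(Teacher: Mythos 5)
Your proof is correct and is essentially the same as the paper's: both rest on the observation that specializing $z$ to $h$ (or $h'$) and then evaluating at $\vec a$ is the same as evaluating $g=\delta^{(x,n)}_{k,z}(f)$ at $\vec a$ and then replacing $z$ by the common element $h(\vec a)=h'(\vec a)$. You have simply spelled out more explicitly the "associativity of substitution" bookkeeping that the paper states in one line.
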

\begin{proof}  We get the same
result in Definition~\ref{D2.1} by specializing $z$   to $h$ and then to $ \vec a$, as we
get by specializing $z$   to  $ h',$ and then to $ \vec a$.
\end{proof}

This observation is needed in our later
specialization arguments.

\end{rem}

 The
following observation, which is rather well known, motivates
Proposition~\ref{len.2} below.
%\begin{remark}
Let $V=Cx_1\oplus \cdots \oplus Cx_n$ and let $z:V\to V$ be a
linear transformation from $V$ to $V$. Let
$$
\det(\lm I-z)=\sum_{k=0}^n c_k(z)\lm^k
$$
be the characteristic (``Cayley-Hamilton") polynomial of $z$. Then we have
the following formula from \cite[Theorem~1.4.12]{rowen1}:
$$
\delta_{k,z}^{(x,n)}(\Capl_n(x_1,\ldots,x_n;\vec{y}))= \sum_{1\le
i_1<\cdots<i_k\le n}
\Capl_n(x_1,\ldots,zx_{i_1},\ldots,zx_{i_k},\ldots,x_n;\vec{y})=
~~~~~~~~~~~~~~~~~~~~~~~~~~~~~~~~~~~~
$$
$$~~~~~~~~~~~~~~~~~~~~~~~~~~~~~~~~~~
~~~~~~~~~~~~~~~~~~~~~~~~~~~~~~~~~~~~~~~~~~=c_k(z)\cdot
\Capl_n(x_1,\ldots,x_n;\vec{y} ),
$$
and the coefficients $c_k(z)$ are independent of the particular
indeterminates $x_1,\ldots,x_n$. Proposition~ \ref{len.2} below displays a
similar phenomenon.

\subsection{Zubrilin's Proposition}\label{rep.21}

 Our goal in this  section is
Proposition~\ref{sof.1}. Let us define the terms used there.

Let $\widehat { C\{ x,y ,t  \}}$ denote the relatively free
algebra $C \{ x,y,t \}/\mathcal{CAP} _{n+1}$.  We denote   the
image of   $f\in C \{x,y,t \}$ in  $\widehat {C\{ x,y ,t \}}$
by~$\hat f .$

\begin{rem}\label{Noeth} If $A$ satisfies $\Capl _{n+1}$, then any
algebra homomorphism $\varphi:  C\{ x,y ,t  \}\to A$ naturally
induces an algebra homomorphism $\widehat{\varphi}: \widehat{ C\{
x,y ,t \}}\to A$ given by $$\widehat{\varphi}(\hat{f} )=
\varphi({f}),$$ since $\mathcal{CAP} _{n+1}\subseteq \ker
\varphi.$
\end{rem}
 %\bigskip
 %$$\delta_{k,a_1}^{(n)}(f)\equiv \delta_{k,a_1}^{(x,n)}(f)\quad\mbox{and}\quad
%\%delta_{\ell,a_2}^{(n)}(f)\equiv \delta_{\ell,a_2}^{(y,n)}(f).$$
%N%ote that by Lemma~\ref{alter.1} the polynomials $\delta_{k,a_1}^{(x,n)}(f)$ and %$%\delta_{\ell,a_2}^{(y,n)}(f)$ satisfy the same assumptions (of alternations) as $f$.
%T%he proof now follows, since
%$%$
%\%delta_{k,a_1}^{(x,n)}\delta_{\ell,a_2}^{(y,n)}(f)= \delta_{\ell,a_2}^{(y,n)}\delta_{k,a_1}^{(x,n)}(f),
%$%$
%h%ence
%$%$\delta_{k,a_1}^{(n)}\delta_{\ell,a_2}^{(n)}(f)\equiv\delta_{k,a_1}^{(x,n)}\delta_{\ell,a_2}^{(y,n)}(f)= %\delta_{\ell,a_2}^{(y,n)}\delta_{k,a_1}^{(x,n)}(f)\equiv\delta_{\ell,a_2}^{(n)}\delta_{k,a_1}^{(n)}(f) ~~ %modulo ~\mathcal{CAP} _{n+1}$$%

\begin{remark}\label{finish.4}
Let $f(x_1,\ldots, x_{n+1})$ be~multilinear in $x_1,\ldots, x_{n+1}$ and alternating in $x_1,\ldots, x_n$. %Similar to "Remark G" in the book,
Construct
\begin{eqnarray}\label{zubrilin3}
\tilde f =\tilde f(x_1,\ldots,x_{n+1})=\sum_{k=1}^{n+1} (-1)^{k-1}
f(x_1,\ldots,x_{k-1},x_{k+1},\ldots,x_{n+1},x_k).
\end{eqnarray}

(All other variables occurring in $f$ are left untouched.)

Then $\tilde f$ is $(n+1)$-alternating in $x_1,\ldots,x_{n+1}$.
%Here $\hat x_k$ means "omit  $ x_k$".
%See for example [Amitsur, Alternating identities, (2.1)].
\end{remark}
%\begin{proof} If we equate, say  $x_1$ and $x_2$, then the summands of $\tilde
%f$ corresponding to $\sum_{k\ge 3}$ are all zero, while the sum of
%the first two summands clearly gives zero.
%\end{proof}

\begin{prop}\label{zubrilin4}
Let $f(x_1,\ldots,x_n, x_{n+1})$ be multilinear in
$x_1,\ldots,x_n, x_{n+1}$ and alternating in $x_1,\ldots,x_n$ (so
$\tilde f$ of Equation~\eqref{zubrilin3} is $(n+1)$-alternating).
%Assume
%$Cap_{n+1}\in Id(A)$). Then
%for each $k$,
Then
$$
\sum_{j=0}^n (-1)^j
\delta_{j,z}^{(x,n)}(f(x_1,\ldots,x_n,z^{n-j}x_{n+1}))\equiv 0
\quad \text{modulo } \mathcal{CAP}_{n+1}.
$$
\end{prop}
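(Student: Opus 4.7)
My plan is to exhibit $h$ as the $\varepsilon^n$-coefficient of an auxiliary polynomial that manifestly lies in $\mathcal{CAP}_{n+1}$, where $\varepsilon$ is a newly introduced central commuting parameter. The auxiliary polynomial will arise from the $(n+1)$-alternator $\tilde f$ of Remark~\ref{finish.4} applied to cleverly chosen substitutions, together with a truncated geometric series in the last slot.

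I would first use Remark~\ref{finish.1}(2) to rewrite
\[
h \;=\; \sum_{S \subseteq \{1,\dots,n\}} (-1)^{|S|}\, f\bigl(x_1^{(S)},\ldots, x_n^{(S)},\, z^{n-|S|} x_{n+1}\bigr),
\]
where $x_i^{(S)} = z x_i$ if $i \in S$ and $x_i^{(S)} = x_i$ otherwise; observe that every summand carries exactly $n$ occurrences of $z$. Next, introducing the truncated geometric series $U(\varepsilon) := \sum_{l=0}^n (\varepsilon z)^l$, a direct multilinear expansion yields
\[
h \;=\; [\varepsilon^n]\, f\bigl((1-\varepsilon z) x_1,\ldots,(1-\varepsilon z) x_n,\, U(\varepsilon)\cdot x_{n+1}\bigr),
\]
since a monomial contributes $\varepsilon^n$ exactly when $|S|$ ``$-\varepsilon z$''-picks among the first $n$ slots combine with the $\varepsilon^{n-|S|}z^{n-|S|}$-pick from $U(\varepsilon)$. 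Replacing $f$ by $\tilde f$, the resulting polynomial
\[
\tilde Q(\varepsilon) \;:=\; \tilde f\bigl((1-\varepsilon z) x_1,\ldots,(1-\varepsilon z) x_n,\, U(\varepsilon)\cdot x_{n+1}\bigr)
\]
satisfies $\tilde Q(\varepsilon) \in \mathcal{CAP}_{n+1}$ by Remark~\ref{finish.4} and the fact that $\mathcal{CAP}_{n+1}$ is a $T$-ideal closed under $\varepsilon$-coefficient extraction; hence $[\varepsilon^n]\tilde Q \in \mathcal{CAP}_{n+1}$. Expanding $\tilde Q = \sum_{k=1}^{n+1}(-1)^{k-1} f(\cdots)$ via the definition of $\tilde f$, the $k = n+1$ term contributes exactly $(-1)^n h$ at the $\varepsilon^n$-level, so closing the argument reduces to proving that the $k \le n$ contributions collectively lie in $\mathcal{CAP}_{n+1}$.

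The hard part will be this last step. A direct reduction --- using $f$'s alternation in its first $n$ slots to push each ``swapped'' $(1-\varepsilon z) x_k$ factor back into position $k$ --- tends to produce a circular identity of the form $h \equiv \pm h \pmod{\mathcal{CAP}_{n+1}}$, which is either tautological (when $n$ is even) or gives only $2h \equiv 0$ (when $n$ is odd), hence inconclusive in characteristic $2$. Breaking this circularity requires exploiting the telescoping relation $(1 - \varepsilon z)\, U(\varepsilon) = 1 - (\varepsilon z)^{n+1}$: the absence of the degree-$(n{+}1)$ term in $U(\varepsilon)$ is precisely what produces a genuine combinatorial imbalance, allowing the residual $k \le n$ terms to be regrouped and matched with further $(n+1)$-alternating evaluations of $\tilde f$ at modified substitutions, each again lying in $\mathcal{CAP}_{n+1}$. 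Making this characteristic-free combinatorial accounting rigorous --- carefully tracking the signs arising from $(-1)^{|S|}$, from $(-1)^{k-1}$, and from the cyclic rotations invoked when applying $f$'s alternation --- is the subtle point flagged in the introduction as the gap in \cite[Remark~2.50]{BR} that the present paper aims to address.
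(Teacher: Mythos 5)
Your generating-function reformulation is correct up to a point: indeed $h:=\sum_{j}(-1)^j\delta_{j,z}^{(x,n)}(f(x_1,\dots,x_n,z^{n-j}x_{n+1}))$ is exactly the $\varepsilon^n$-coefficient of $f((1-\varepsilon z)x_1,\dots,(1-\varepsilon z)x_n,U(\varepsilon)x_{n+1})$, and since $\tilde f$ is $(n+1)$-alternating, $\tilde Q(\varepsilon)\in\mathcal{CAP}_{n+1}$ for all $\varepsilon$, hence $[\varepsilon^n]\tilde Q\in\mathcal{CAP}_{n+1}$ (this is just the device of Lemma~\ref{alter.1}). But the reduction you propose at that point is circular. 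You have $[\varepsilon^n]\tilde Q=(-1)^nh+\sum_{k\le n}(-1)^{k-1}R_k$ where $R_k=[\varepsilon^n]f((1-\varepsilon z)x_1,\dots,\widehat{(1-\varepsilon z)x_k},\dots,(1-\varepsilon z)x_n,U(\varepsilon)x_{n+1},(1-\varepsilon z)x_k)$. Since the left side is already known to lie in $\mathcal{CAP}_{n+1}$, the assertion ``the $k\le n$ contributions collectively lie in $\mathcal{CAP}_{n+1}$'' is literally equivalent to the assertion ``$h\in\mathcal{CAP}_{n+1}$'' — not a reduction at all. You acknowledge the circularity, but the $(1-\varepsilon z)U(\varepsilon)=1-(\varepsilon z)^{n+1}$ remark is a gesture, not an argument; nothing in the proposal explains how the missing $(\varepsilon z)^{n+1}$ term is supposed to break the tie.

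The deeper issue is that your substitution is attached to the \emph{indeterminate} $x_{n+1}$, while the paper's operator $\delta^{(x,n)}_{j,z}$ is attached to the first $n$ \emph{positions} of whatever expression it acts on (Remark~\ref{finish.1}.1). In the paper's proof one first uses $\tilde f\equiv 0$ and the substitution $x_{n+1}\mapsto z^{n-j}x_{n+1}$, and only then applies $\delta^{(x,n)}_{j,z}$ to each summand; in the $k\le n$ summand $g_{j,k}$, the entry $z^{n-j}x_{n+1}$ sits in position~$n$ and can therefore receive \emph{one more} $z$ from $\delta$. As a result, for each subset $S'\subseteq\{1,\dots,n-1\}$ the configuration with $z^{n-|S'|}$ in position~$n$ arises \emph{twice} in $Q_k=\sum_j(-1)^j\delta^{(x,n)}_{j,z}(g_{j,k})$ — once from $n\notin S$ (sign $(-1)^{|S'|}$) and once from $n\in S$ (sign $(-1)^{|S'|+1}$) — and these pair off to give $Q_k=0$ identically, with no further appeal to $\mathcal{CAP}_{n+1}$. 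That is what the $[k']$/$[k'']$ split formalizes. In your $R_k$, position~$n$ carries $U(\varepsilon)x_{n+1}$ with all-positive coefficients, so each configuration occurs exactly once and there is no cancellation; the analogue of $Q_k=0$ is simply false for $R_k$. To rescue the approach you would need a different mechanism, and the proposal does not supply one.

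Two smaller remarks. First, your verification that the $k=n+1$ term contributes $(-1)^nh$ is fine, and is the same observation the paper uses. Second, you attribute the remaining difficulty to ``the gap in \cite[Remark~2.50]{BR} that the present paper aims to address,'' but that gap concerns Lemma~\ref{len.01} (commutativity of the $\hat\delta^{(n)}_{k,h}$ operators), not Proposition~\ref{zubrilin4}; the present proposition is given complete proofs in both \cite{BR} and here. I'd recommend replacing the generating-function substitution on $\tilde f$ by the paper's position-wise $\delta$ applied term-by-term, and then carrying out the explicit $[k']$/$[k'']$ telescoping to get $Q_k=0$; the generating function is a clean way to package the \emph{definition} of $h$ but is the wrong tool for the cancellation you need.
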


\begin{proof} %As remarked above, we may assume that $h$ is a new indeterminate $z$, and then
%specialize.
Throughout we work modulo $\mathcal{CAP}_{n+1}$. Since $\tilde f$
is $(n+1)$-alternating, we have
$$ 0 \equiv
\tilde
f=f(x_2,x_3\ldots,x_{n+1},x_1)-f(x_1,x_3\ldots,x_{n+1},x_2)+\cdots+(-1)^nf(x_1,x_2,\ldots, x_n,x_{n+1}) .$$
Thus, modulo $\mathcal{CAP}_{n+1}$ the last summand
$(-1)^nf(x_1,x_2,\ldots, x_n,x_{n+1})$ can be replaced by minus
the sum of the other summands:
$$
(-1)^nf(x_1,x_2,\ldots, x_n,x_{n+1})\equiv  \sum_{k=1}^{n}(-1)^k
f(x_1,\ldots, x_{k-1},  x_{k+1},\ldots, x_n,x_{n+1},x_k) ,
$$
 Given $0\le j\le
n$, substitute $x_{n+1}\mapsto z^{n-j}x_{n+1}$, so
$$
(-1)^nf(x_1,x_2,\ldots, x_n,z^{n-j}x_{n+1})\equiv\sum_{k=1}^{n}
(-1)^kf(x_1,\ldots, x_{k-1},  x_{k+1},\ldots.
x_n,z^{n-j}x_{n+1},x_k).
$$
Applying $\delta_{j,z}^{(x,n)}$ and summing with sign, we get
$$
(-1)^n\sum_{j=0}^n (-1)^j \delta_{j,z}^{(x,n)}(f(x_1,\ldots,x_n,
z^{n-j}x_{n+1})\equiv~~~~~~~~~~~~~~~~~~~~~~~~~~~~~~~~~~~~~~~~~~~~~~~~~~~~~~~~~~
$$
$$
\equiv  \sum_{j=0}^n (-1)^j
\sum_{k=1}^{n}(-1)^k\delta_{j,z}^{(x,n)} (f(x_1,\ldots, x_{k-1},
x_{k+1},\ldots, x_n,z^{n-j}x_{n+1},x_k))=
$$
$$~~~~~~~~~~~~~~~~~~~~~~~~~~~~~~~~~~~~~~~~~~~~~~~~~~~~~
=\sum_{k=1}^{n} (-1)^k \sum_{j=0}^n (-1)^j\delta_{j,z}^{(x,n)}
(f(x_1,\ldots, x_{k-1},  x_{k+1},\ldots, x_n,z^{n-j}x_{n+1},x_k)).
$$
%We therefore need to prove that this last (double) sum is 0 modulo $\mathcal{CAP}_{n+1}$.
%an identity of $A$.
 Denote $
g_{j,k}=f(x_1,\ldots, x_{k-1},  x_{k+1},\ldots,
x_n,z^{n-j}x_{n+1},x_k), $ and
\begin{eqnarray}\label{zubrilin1.1}
Q_k=\sum_{j=0}^n (-1)^j\delta^{(x,n)}_{j,z}( g_{j,k}).
\end{eqnarray}

It
 suffices to % fix $1\le k\le n$,  denote
%\begin{eqnarray}\label{zubrilin1}
%Q_k=\sum_{j=0}^n (-1)^j\delta_{j,z}^{(x,n)} (f(x_1,\ldots, x_{k-1},
%x_{k+1},\ldots, x_n,z^{n-j}x_{n+1},x_k)),
%\end{eqnarray}
%and
show that $Q_k \equiv 0$ for each $k$. Note that  in calculating
$\delta^{(x,n)}_{j,z}( g_{j,k})=\delta_{j,z}^{(x,n)}
(f(x_1,\ldots, x_{k-1}, x_{k+1},\ldots, x_n,z^{n-j}x_{n+1},x_k))$,
$x_k$ is unchanged (since it is the last indeterminate), while for
all other $x_i$'s (in particular -- for $x_{n+1}$) we substitute
$x_i\mapsto (z+1)x_i$, cf.~ Remark~\ref{finish.1}.1. Therefore
$$
\delta_{j,z}^{(x,n)}(g_{j,k})=\delta_{j,z,[k']}^{(x,n)}(g_{j,k})+\delta_{j,z,[k'']}^{(x,n)}(g_{j,k})
$$
where

\medskip

$\delta_{j,z ,[k']}^{(x,n)}(g_{j,k})$ is the sum of the monomials
of $\delta_{j, z }^{(x,n)}(g_{j,k})$  having $z$-degree $j$, where
$x_{n+1}$ was replaced by $z x_{n+1}$;

\medskip
and

\medskip

$\delta_{j, z,[k''] }^{(x,n)}(g_{j,k})$ is the sum of the
monomials of $\delta_{j, z }^{(x,n)}(g_{j,k})$  having $z$-degree
$j$, where $x_{n+1}$ was unchanged.

\medskip
It is not difficult to see that   for $j>0$,
$$
\delta_{j,z,[k']}^{{(x,n)}} f(x_1,\ldots, x_{k-1}, x_{k+1},\ldots,
x_n,z^{n-j}x_{n+1},x_k)= $$ $$  \qquad \quad
\delta_{j-1,{z},[k'']}^{(x,n)} f(x_1,\ldots, x_{k-1},
x_{k+1},\ldots, x_n,z^{n-j+1}x_{n+1},x_k),
$$
namely
$$
\delta_{j,z,[k']}^{{(x,n)}}(g_{j,k})=
\delta_{j-1,{z},[k'']}^{(x,n)}(g_{j-1,k}).
$$

It also follows from the definitions that
$\delta_{0,z,[k']}^{{(x,n)}}(g_{0,k})=
\delta_{n,z,[k'']}^{{(x,n)}}(g_{n,k})=0$. Hence
$$
\sum_{j=0}^n
(-1)^j\delta_{j,z,[k']}^{{(x,n)}}(g_{j,k})=\sum_{j=1}^n
(-1)^j\delta_{j,z,[k']}^{{(x,n)}}(g_{j,k})=
$$
$$
=\sum_{j=1}^n
(-1)^j\delta_{j-1,{z},[k'']}^{(x,n)}(g_{j-1,k})=-\sum_{j=0}^{n-1}
(-1)^j\delta_{j,{z},[k'']}^{(x,n)}(g_{j,k}),
$$
and
$$\sum_{j=0}^n (-1)^j\delta_{j,z,[k'']}^{{(x,n)}}(g_{j,k})=\sum_{j=0}^{n-1} (-1)^j\delta_{j,z,[k'']}^{{(x,n)}}(g_{j,k}).$$

\medskip
Summing in~\eqref{zubrilin1.1} we get
$$
Q_k=\sum_{j=0}^n (-1)^j\delta^{(x,n)}_{j,z}( g_{j,k})\equiv
\sum_{j=0}^n (-1)^j\left(\delta_{j,z,[k']}^{{(x,n)}}g_{j,k}+
\delta_{j,z,[k'']}^{(x,n)}g_{j,k}\right)\equiv 0.
$$
\end{proof}

\subsubsection{ The module $\MM$ over the relatively free algebra of $\Capl_{n+1}$}\label{module.M(g)}
$ $
%Let  $R=C[\xi_{k,h}^{}\mid 1\le k\le n,~h\in C\{t  \}]$ (where the
%$\xi_{k,h}$ are commutative variables). Thus, $R$ is commutative.
%

We need a special sort of alternating polynomials.

\begin{defn} A polynomial $f(x_1,\ldots, x_{n};y_1,\ldots ,y_n;\vec t)$, where
$\vec t$ denotes other possible indeterminates, is \textbf{doubly
alternating} if $f$ is linear and alternating in $x_1,\ldots,x_n$
and $y_1,\ldots,y_n$.
\end{defn}

 Our main example is the
\textbf{double Capelli} polynomial \begin{equation}\label{put1}
\Dcap_n = t_1 \Capl _n(x_1,\ldots, x_{n}; \vec t)t_2
\Capl_n(y_1,\ldots ,y_n; \vec {t'})t_3. \end{equation}% However, it
%is easy to produce doubly alternating polynomials which are not
%consequences of the double Capelli polynomial, even for $n=2$, for
%example:
%$$x_1y_1x_2y_2 - x_1y_2x_2y_1 -x_2y_1x_1y_2 + x_2y_2x_1y_1.$$

Here $\vec t$ and $ \vec {t'}$ are arbitrary sets of extra
indeterminates. We suppress the indeterminates $\vec t$,  $\vec
{t'}$, and $t_1,t_2,t_3$ from the notation, since we do not
   alter them.

\begin{defn}\label{module.1} Let $\MMO$ denote the $C$-submodule of $C\{ x,y ,t  \}$
consisting of all doubly alternating polynomials (in $x_1
\ldots,x_n,$ and in $y_1,\ldots,y_n$).

  $\MM$ denotes the image of  $\MMO$ in $\widehat {C\{ x,y ,t  \}}$, i.e.,  the $C$-submodule of $\widehat {C\{ x,y ,t  \}}$
consisting of the images of all doubly alternating polynomials (in
$x_1 \ldots,x_n,$ and in $y_1,\ldots,y_n$).
\end{defn}

\begin{remark}\label{finish.3}
  $\MM$ is a $\widehat {C\{ t  \}}$-submodule of $\widehat {C\{ x,y ,t  \}}$,
  namely $\widehat {C\{ t  \}}\MM\subseteq \MM$. Indeed,
  let $h\in C\{ t\}$ and $f\in \mathcal M$. If either $h$ or $f$ is in ${\mathcal{CAP}}_{n+1}$ then
  $hf\in {\mathcal{CAP}}_{n+1}$;  hence the product $\hat h\hat f=\widehat{hf}$ is well defined.
  Moreover, if $f=f(x_1,\ldots,x_n,y_1,\ldots, y_n,{\vec t}\;)$ is doubly alternating in the $x$'s
  and in the $y$'s, and   $h\in C\{{\vec t}\;\}$, then $hf$ is
  doubly alternating in the $x$'s and in the~ $y$'s.

\end{remark}
\subsubsection{The Zubrilin action}$ $

The theory hinges on the following amazing result, which we prove
 in
Section~\ref{ster} below. (This is also proved in
\cite[Theorem~4.82]{BR}, but more details are given here.)

\begin{prop}\label{len.2}
Let  $f(x_1,\ldots, x_{n};y_1,\ldots ,y_n)$ be  doubly alternating
in $x_1,\ldots,x_n$  and in $y_1,\ldots,y_n$  (perhaps involving
additional indeterminates). Then for any polynomial $h$,
\begin{equation}  \delta_{k,h}^{(x,n)}(f)\equiv\delta_{k,h}^{(y,n)}(f)\quad
modulo ~\mathcal{CAP} _{n+1};\end{equation}
%$\delta_{k,z}^{(x,n)}(f)=\delta_{k,z}^{(y,n)}(f)$,
namely,
$$
\sum_{1\le i_1<\cdots <i_k\le n}f\mid_{x_{i_j}\to
h{x_{i_j}}}\equiv \sum_{1\le i_1<\cdots <i_k\le
n}f\mid_{y_{i_j}\to h{y_{i_j}}} ~ modulo ~\mathcal{CAP} _{n+1}.
$$
%
%In that case we can therefore omit the $x$ and the $y$ indices
%from $\delta$ and write
%$$\delta_{k,h}^{(x,n)}(f)\equiv\delta_{k,h}^{(y,n)}(f)\equiv\delta_{k,h}^{(n)}(f) ~ modulo ~\mathcal{CAP} _{n+1}.$$

\end{prop}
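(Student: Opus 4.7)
The plan is to reduce, via Lemma~\ref{extr1}, to the case where $h=z$ is a new noncommutative indeterminate; once the equivalence is established for $z$, the general statement follows by specialization.

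The main idea is to exploit the double alternation of $f$ by applying Proposition~\ref{zubrilin4} (Zubrilin's identity) in two symmetric ways. Viewing $f(x_1,\ldots,x_n;y_1,\ldots,y_n)$ as multilinear in $x_1,\ldots,x_n,y_1$ and alternating in $x_1,\ldots,x_n$ (so that $y_1$ plays the role of the extra slot $x_{n+1}$, with $y_2,\ldots,y_n$ treated as part of $\vec t$), Proposition~\ref{zubrilin4} yields
$$\sum_{j=0}^n (-1)^j\, \delta_{j,z}^{(x,n)}(f)\bigr|_{y_1\to z^{n-j}y_1} \;\equiv\; 0 \pmod{\CAP_{n+1}}. \qquad (\alpha)$$
By the symmetric application, using the alternation in $y_1,\ldots,y_n$ with $x_1$ in the role of the extra slot,
$$\sum_{j=0}^n (-1)^j\, \delta_{j,z}^{(y,n)}(f)\bigr|_{x_1\to z^{n-j}x_1} \;\equiv\; 0 \pmod{\CAP_{n+1}}. \qquad (\beta)$$

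I would then induct on $k$. The base case $k=0$ is immediate since both sides equal $f$. For the inductive step, observe that each $\delta_{j,z}^{(x,n)}(f)$ and $\delta_{j,z}^{(y,n)}(f)$ remains doubly alternating (by Lemma~\ref{alter.1} applied in each variable group, noting that $\delta^{(x)}$ respects $y$-alternation since it does not touch the $y$'s, and vice versa). The inductive hypothesis, applied to these lower-order $\delta$-derivatives, identifies $\delta_{j,z}^{(x,n)}(f)\equiv\delta_{j,z}^{(y,n)}(f)$ modulo $\CAP_{n+1}$ for $j<k$. Substituting these identifications into $(\alpha)$ and $(\beta)$ and subtracting cancels the low-order contributions, leaving a relation that isolates the $k$-th-order derivatives.

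The main obstacle is the final matching step: in $(\alpha)$ the factor $z^{n-j}$ is inserted before $y_1$, while in $(\beta)$ the factor $z^{n-j}$ is inserted before $x_1$; these are a priori inequivalent substitutions. Bridging them modulo $\CAP_{n+1}$ requires a secondary application of Proposition~\ref{zubrilin4} to the doubly alternating polynomial $\delta_{j,z}^{(x,n)}(f)$ itself, reducing the matching to a lower-order instance of the very statement being proved and thereby closing the induction. Making this recursive bookkeeping precise — managing the placement and powers of $z$ across the two identities — is the technical heart of the argument, and it is exactly here that the fact that the $\delta$-operators preserve double alternation (and hence feed back into the inductive hypothesis) is essential.
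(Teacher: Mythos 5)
There is a genuine gap here, and it sits exactly where you place it yourself: the ``final matching step.'' The two applications of Proposition~\ref{zubrilin4} that you write down, $(\alpha)$ and $(\beta)$, do not lead to a closed induction. Even granting the inductive hypothesis $\delta_{j,z}^{(x,n)}(f)\equiv\delta_{j,z}^{(y,n)}(f)$ for $j<k$, the terms in $(\alpha)$ are $\delta_{j,z}^{(x,n)}(f)\bigl|_{y_1\to z^{n-j}y_1}$ while those in $(\beta)$ are $\delta_{j,z}^{(y,n)}(f)\bigl|_{x_1\to z^{n-j}x_1}$, and replacing one $\delta$-derivative by the other does not make these expressions cancel: the auxiliary factor $z^{n-j}$ sits in front of a $y$-variable on one side and an $x$-variable on the other, and one cannot subtract the two relations. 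Note also that every term in $(\alpha)$ has the same total $z$-degree $n$, so a degree argument will not separate the $j=k$ term from the rest. Your remedy --- a ``secondary application of Proposition~\ref{zubrilin4}'' plus ``recursive bookkeeping'' --- is not carried out, and I do not believe it can be made to work as stated: the matching you need is precisely a rule for moving a $z$-factor across an $x$-for-$y$ swap of a doubly alternating polynomial, and Proposition~\ref{zubrilin4} (which only treats alternation in a \emph{single} block of variables) gives no such rule, no matter how often you apply it.

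The missing ingredient, which the paper supplies, is a fact of a different nature. The paper first polarizes: setting $z'=1+\varepsilon z$ for a central $\varepsilon$, it reduces the entire family of equivalences (over all $k$) to the single claim
$$
f(z'x_1,\dots,z'x_n,y_1,\dots,y_n,\vec{t}\,)\equiv f(x_1,\dots,x_n,z'y_1,\dots,z'y_n,\vec{t}\,) \pmod{\mathcal{CAP}_{n+1}},
$$
by comparing coefficients of $\varepsilon^k$. That claim is then established by two applications of the \emph{swap lemma} (Lemma~\ref{LeZubr2Grp}): a doubly alternating polynomial is invariant, modulo $\mathcal{CAP}_{n+1}$, under exchanging the $x$-block with the $y$-block. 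The swap lemma is where all the real work happens, and it rests on a genuine combinatorial identity in the group algebra $C[S_{2n}]$ (Proposition~\ref{jan.3}), namely $\sum_{Z\subseteq X}(-1)^{|Z|}P(Z)=\sum_{\sigma(X)=X}\sgn(\sigma)\cdot\sigma$. None of this combinatorial content is present in your sketch, which is why your induction cannot close. In short, your proposal is a plan that defers exactly the part of the proof that is hard, and that part requires an idea (the group-algebra swap identity) of a fundamentally different kind than iterating Proposition~\ref{zubrilin4}.
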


Before proving Proposition~\ref{len.2} we deduce some of its
consequences.

 \begin{rem}\label{welld}
It follows from Proposition~\ref{len.2} that
$\delta_{k,h}^{(x,n)}(f)- \delta_{k,h}^{(y,n)}(f)\in
{\mathcal{CAP}}_{n+1} $ whenever $f\in \MM$, so working modulo
${\mathcal{CAP}}_{n+1}$
  we  can suppress $x$ in the notation,
  writing
$\hat  \delta_{k,h}^{(n)}(\hat f)$ for~$\hat
\delta_{k,h}^{{(x,n)}}(\hat f) $.
\end{rem}

\subsubsection{Commutativity of the operators
$\delta_{k,h_j}^{(n)}$  modulo $\mathcal{CAP}
_{n+1}$}\label{finish.2}$ $

We use $\MM$ instead of $\MMO$ because of the following lemma.

\begin{lem}\label{module.10} (i) $\delta_{k,h}^{(n)}$ induces a
well-defined map $\hat \delta_{k,h}^{(n)}: \MM \to \MM $ given by
$\hat{\delta}_{k,h}^{(n)}(\hat f)=\widehat{\delta_{k,h}^{(x,n)}(
f)}$.

(ii) $\hat \delta_{k,h}^{(n)}$  produces the same result using the
indeterminates $x$ or $y$.
\end{lem}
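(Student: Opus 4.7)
The plan is to verify the two claims directly from the preceding results; the main work has already been done, so this is essentially a packaging lemma.

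For part (i), well-definedness requires that if $f, f' \in \MMO$ satisfy $\hat{f} = \hat{f}'$ in $\widehat{C\{x,y,t\}}$, then $\widehat{\delta^{(x,n)}_{k,h}(f)} = \widehat{\delta^{(x,n)}_{k,h}(f')}$. The hypothesis means $f - f' \in \mathcal{CAP}_{n+1}$, so what we need is $\delta^{(x,n)}_{k,h}(f - f') \in \mathcal{CAP}_{n+1}$. This is exactly Remark \ref{extr}.1 (for the operator $\delta^{(x,n)}_{k,z}$ with the indeterminate $z$), and Remark \ref{extr}.2 allows us to specialize $z \mapsto h$ while keeping the conclusion inside $\mathcal{CAP}_{n+1}$. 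Thus $\hat{\delta}^{(n)}_{k,h}$ gives a well-defined element of $\widehat{C\{x,y,t\}}$.

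Next I must check that this element actually lies in $\MM$, i.e.\ is the image of a doubly alternating polynomial. Given $f \in \MMO$, Lemma \ref{alter.1} (with $\vec{t}$ absorbing $y_1, \ldots, y_n$ together with the other extra indeterminates) says that $\delta^{(x,n)}_{k,z}(f)$ remains alternating in $x_1, \ldots, x_n$; specializing $z \mapsto h$ preserves this. On the other hand, since $\delta^{(x,n)}_{k,h}$ performs substitutions only on the $x$-indeterminates and leaves $y_1, \ldots, y_n$ untouched (Remark \ref{finish.1}.3), the alternating property in the $y$'s is inherited from $f$. Hence $\delta^{(x,n)}_{k,h}(f) \in \MMO$, and its image lies in $\MM$.

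For part (ii), by symmetry the same argument applied with the roles of $x$ and $y$ reversed shows that $\hat{f} \mapsto \widehat{\delta^{(y,n)}_{k,h}(f)}$ also gives a well-defined map $\MM \to \MM$. Proposition \ref{len.2} says precisely that for doubly alternating $f$,
\[
\delta^{(x,n)}_{k,h}(f) \equiv \delta^{(y,n)}_{k,h}(f) \pmod{\mathcal{CAP}_{n+1}},
\]
so their images in $\widehat{C\{x,y,t\}}$ coincide. Consequently the two maps produced via $x$-substitution and $y$-substitution agree on $\MM$, justifying the notation $\hat{\delta}^{(n)}_{k,h}$ with the superscript $x$ suppressed, as announced in Remark \ref{welld}. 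The only nontrivial input is Proposition \ref{len.2}, which is the ``amazing result'' to be proved separately; granting it, the present lemma is essentially a bookkeeping statement.
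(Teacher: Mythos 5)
Your proposal is correct and follows essentially the same route as the paper: well-definedness via Remark~\ref{extr} (that $\delta^{(x,n)}_{k,h}$ preserves $\mathcal{CAP}_{n+1}$), and part (ii) via Proposition~\ref{len.2} / Remark~\ref{welld}. You additionally verify that the image actually lands in $\MM$ (using Lemma~\ref{alter.1} for the $x$-alternation and the fact that the $y$'s are untouched), a point the paper's proof leaves implicit; that is a worthwhile addition rather than a different argument.
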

\begin{proof} (i) If $f(x_1,\ldots,x_n,y_1,\ldots,y_n)$ and $g(x_1,\ldots,x_n,y_1,\ldots,y_n)$
are doubly alternating polynomials, with $\hat f = \hat g$, then
$f-g \in \mathcal{CAP}_{n+1},$ so by Remark~\ref{extr}(1),
$\delta^{(n)}_{k,h}(f-g)\in\mathcal{CAP}_{n+1}$ and hence
$\widehat{\delta_{k,h}^{(n)}( f-g)}=0.$ Therefore we have

$$0 =\widehat{\delta_{k,h}^{(n)}(
f-g)}= \widehat{\delta_{k,h}^{(n)}( f)}-
\widehat{\delta_{k,h}^{(n)}( g)}= \hat{\delta}_{k,h}^{(n)}(\hat
f)-\hat{\delta}_{k,h}^{(n)}(\hat g),
%\widehat{\delta_{k,h}^{(x,n)}( f)}
$$ proving that $\hat
\delta_{k,h}^{(n)}$ is well-defined.

(ii) The assertion follows from Remark~\ref{welld}, which shows
that $\widehat{\delta_{k,h}^{(x,n)}(
f)}=\widehat{\delta_{k,h}^{(y,n)}(f)}.$
\end{proof}

\medskip

\begin{lem}\label{len.01}
Let $f=f(x_1,\ldots,x_n;y_1,\ldots,y_n)$ be doubly alternating in
$x_1,\ldots,x_n$ and in $y_1,\ldots,y_n$ (and perhaps involving
other indeterminates). Let $1\le k,\ell\le n$. Then for any $h_1,
h_2 \in C\{ t  \},$
\begin{eqnarray}\label{last.11}
\hat \delta_{k,h_1}^{(n)}\hat \delta_{\ell,h_2}^{(n)}(\hat f)=
\hat \delta_{\ell,h_2}^{(n)}\hat \delta_{k,h_1}^{(n)}(\hat f) .
\end{eqnarray}
\end{lem}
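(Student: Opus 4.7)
The plan is to reduce the commutativity of the two $\hat\delta$-operators on $\MM$ to the utterly trivial fact that the ``$x$-version'' and the ``$y$-version'' of these operators commute at the free-algebra level, since they substitute in disjoint sets of indeterminates. The bridge between these two pictures is Proposition~\ref{len.2}, which has already absorbed the nontrivial content.

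First I would verify that double-alternation is preserved under each individual operator. If $f \in \MMO$ is doubly alternating, then $\delta_{k,h_1}^{(x,n)}(f)$ is again doubly alternating: alternation in $x_1,\dots,x_n$ is preserved by Lemma~\ref{alter.1}, and alternation in $y_1,\dots,y_n$ is preserved tautologically since the operator only substitutes $x_i\mapsto(1+h_1)x_i$ and leaves all $y_j$ untouched. Similarly $\delta_{\ell,h_2}^{(y,n)}(f)$ is doubly alternating. Consequently, by Lemma~\ref{module.10}, the iterated expressions $\hat\delta_{\ell,h_2}^{(n)}\hat\delta_{k,h_1}^{(n)}(\hat f)$ and $\hat\delta_{k,h_1}^{(n)}\hat\delta_{\ell,h_2}^{(n)}(\hat f)$ are well-defined elements of $\MM$, and by Remark~\ref{welld} each outer operator may be realized via whichever of its $x$- or $y$-versions is most convenient. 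Choosing the outer operator to match the ``other'' family of variables, I get
$$
\hat\delta_{\ell,h_2}^{(n)}\hat\delta_{k,h_1}^{(n)}(\hat f)
= \widehat{\delta_{\ell,h_2}^{(y,n)}\,\delta_{k,h_1}^{(x,n)}(f)},
\qquad
\hat\delta_{k,h_1}^{(n)}\hat\delta_{\ell,h_2}^{(n)}(\hat f)
= \widehat{\delta_{k,h_1}^{(x,n)}\,\delta_{\ell,h_2}^{(y,n)}(f)}.
$$

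Now the easy step: in $C\{x,y,t\}$, the substitutions $x_i\mapsto(1+h_1)x_i$ and $y_j\mapsto(1+h_2)y_j$ (together with the extraction of homogeneous components in $h_1$, $h_2$, which sits in $C\{t\}$) act in disjoint letters, so they commute as operators on the free algebra:
$$
\delta_{\ell,h_2}^{(y,n)}\,\delta_{k,h_1}^{(x,n)}(f)
= \delta_{k,h_1}^{(x,n)}\,\delta_{\ell,h_2}^{(y,n)}(f).
$$
Passing to the quotient $\widehat{C\{x,y,t\}}$ and comparing the two displays above yields~\eqref{last.11}.

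I do not anticipate any real obstacle, since Proposition~\ref{len.2} is exactly the technology needed to swap between the $x$- and $y$-versions of the operators. The only sanity check is that double-alternation survives one application of a $\delta$, so that Proposition~\ref{len.2} remains applicable at the second stage; this follows at once from Lemma~\ref{alter.1}. (If one tried instead to prove~\eqref{last.11} by manipulating $\delta^{(x,n)}_{k,h_1}\delta^{(x,n)}_{\ell,h_2}(f)$ versus $\delta^{(x,n)}_{\ell,h_2}\delta^{(x,n)}_{k,h_1}(f)$ directly on the $x$-side, neither product is visibly symmetric in $k,\ell$, and one would have to reprove a variant of Proposition~\ref{len.2}; routing through the $y$-variables avoids this entirely.)
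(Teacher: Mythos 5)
Your proof is correct and takes essentially the same route as the paper's: you pass both iterated operators to a common ``mixed'' representative $\delta_{\ell,h_2}^{(y,n)}\delta_{k,h_1}^{(x,n)}(f)=\delta_{k,h_1}^{(x,n)}\delta_{\ell,h_2}^{(y,n)}(f)$ (commutativity of the $x$-side and $y$-side operators being obvious since they substitute in disjoint variables), using Proposition~\ref{len.2} twice via Remark~\ref{welld} and Lemma~\ref{module.10}, with Lemma~\ref{alter.1} guaranteeing that double-alternation survives one application so that Proposition~\ref{len.2} still applies at the second stage. The paper's proof is the same chain of congruences written out symmetrically from the all-$x$ form to itself, so the two arguments have identical content, yours being a slightly tighter presentation.
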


\begin{proof}
Equation~ \eqref{last.11} claims that modulo ~$\mathcal{CAP}
_{n+1}$,
\begin{enumerate}\eroman
\item
$$
\delta_{k,h_1}^{(x,n)}\delta_{\ell,h_2}^{(x,n)}(f)\equiv
\delta_{\ell,h_2}^{(x,n)}\delta_{k,h_1}^{(x,n)}(f)\quad\mbox{and}\quad
$$
\item
$$
\delta_{k,h_1}^{(x,n)}\delta_{\ell,h_2}^{(y,n)}(f)\equiv
\delta_{\ell,h_2}^{(y,n)}\delta_{k,h_1}^{(x,n)}(f)\quad\mbox{and}\quad
$$
\item
$$
\delta_{k,h_1}^{(y,n)}\delta_{\ell,h_2}^{(y,n)}(f)\equiv
\delta_{\ell,h_2}^{(y,n)}\delta_{k,h_1}^{(y,n)}(f).
$$
\end{enumerate}
The middle equivalence (ii) is an obvious equality. The first and
third  equivalences are similar, and we prove the first.
 By Proposition~\ref{len.2},  by (ii), and again by Proposition~\ref{len.2},
 modulo $\mathcal{CAP} _{n+1}$  we can write
  $$
\delta_{k,h_1}^{(x,n)}\delta_{\ell,h_2}^{(x,n)}(f)\equiv
\delta_{k,h_1}^{(x,n)}\delta_{\ell,h_2}^{(y,n)}(f)\equiv
\delta_{\ell,h_2}^{(y,n)}\delta_{k,h_1}^{(x,n)}(f)\equiv
\delta_{\ell,h_2}^{(x,n)}\delta_{k,h_1}^{(x,n)}(f).
$$
 Note that in the last step, Lemma~\ref{alter.1} was applied (to $\delta_{k,h_1}^{(x,n)}(f)$).
 \end{proof}

%\medskip
%{\bf We view $\widehat {F\{t  \}}[\xi_{n, F\{ t \}}]$ as $R\otimes_F \widehat{F\{t\}}$.}

%{\bf We need to show that $\mathcal{CAP}_n(A)^2\subseteq M(f)~(=\MM_n(f))$. What is $f$ here? And what is $M(f)$?
%Is Definition~\ref{module.1} correct?
% namely, that $M(f)$ is spanned over $C$ by the elements
%$$
%\{\delta_{k_q,a_q}^{(n)}\cdots \delta_{k_1,a_1}^{(n)}(f)\mid  a_q,\ldots,a_1\in A,\quad q\ge 1\}.
%$$
 %I guess this will show that indeed, $\mathcal{CAP}_n(A)^2\subseteq M(f)$ ??
%}

\subsubsection{The ideal $I_{n,A} \subset A[\xi_{n,A}] $ and the
annihilator of $\hat M$}\label{rep.2}$ $

\begin{defn}\label{obst.1}
For each $a\in A$ let $\xi _{1,a},\ldots, \xi _{n,a}$ be  $n$
corresponding new commuting variables, and   construct $
A[\xi_{n,A}] =A[\xi _{1,a},\ldots, \xi _{n,a}\mid a\in A]$. Let
$I_{n,A}\subseteq  A[\xi_{n,A}] $ be the ideal generated in
$A[\xi_{n,A}]$ by the elements
$$
a^n+\xi _{1,a}a^{n-1}+\cdots +\xi _{n,a},\quad a\in A,
$$
namely
$$
I_{n,A}=\langle a^n+\xi _{1,a}a^{n-1}+\cdots +\xi _{n,a}\mid a\in A\rangle.
$$

\end{defn}

\begin{rem}\label{obst1}
In view of Proposition~\ref{len.2}, the map $\hat
\delta_{k,h}^{(n)}: \MM \to \MM $ of Lemma~\ref{module.10} yields
an action of the $C$-algebra $\widehat {C\{t \}}[\xi_{n,\widehat {C\{t \}}}]$ on $\MM$, given by $\xi_{k,h}f=\delta_{k,h}^{(n)}(f)$.
\end{rem}

 Working with the relatively free algebra, our next goal is to prove  that
$I_{n,\widehat {C\{t \}}}\cdot \MM=0$. For that we shall need the
next result.

\begin{prop}(Zubrilin)\label{zubrilin2}
Assume that a multilinear polynomial $g(x_1,\ldots,x_n)$
 is alternating in
$x_1,\dots,x_n$.  Then, modulo $\mathcal {CAP}_{n+1}$, %$\widehat {C\{ x,y ,t  \}}=C\{ x,y ,t  \}/\mathcal{CAP}_{n+1}$
%$\text{modulo } \mathcal{CAP}_{n+1}$,~ $\widehat {C\{ x,y ,t  \}}$
%also satisfies the identity
$$
\sum_{k=0}^n (-1)^kh^{n-k}\delta_{k,h}^{(n)}(g)\equiv 0
$$
for any $h\in  {C\{t\}}$. In particular, if $g$ is doubly
alternating, then (again $\text{modulo } \mathcal{CAP}_{n+1}$)
$$
\sum_{k=0}^n (-1)^kh^{n-k}\delta_{k,h}^{(n)}(g)\equiv 0.
$$
\end{prop}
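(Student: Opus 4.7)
The plan is to derive Proposition~\ref{zubrilin2} directly from Proposition~\ref{zubrilin4} by attaching an auxiliary variable $x_{n+1}$ to $g$ and subsequently specializing it to~$1$. The factor $z^{n-j}x_{n+1}$ in the hypothesis of Proposition~\ref{zubrilin4} is engineered precisely to produce the desired powers $h^{n-k}$, once $x_{n+1}$ is erased.

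Given a multilinear $g(x_1,\ldots,x_n)$ alternating in $x_1,\ldots,x_n$, I would take
\[
 f(x_1,\ldots,x_n,x_{n+1}) := x_{n+1}\,g(x_1,\ldots,x_n),
\]
which is multilinear in $x_1,\ldots,x_{n+1}$ and alternating in $x_1,\ldots,x_n$. By Remark~\ref{finish.1}.2, the operator $\delta_{j,z}^{(x,n)}$ acts by summing the substitutions $x_i \mapsto zx_i$ over subsets of $\{x_1,\ldots,x_n\}$ of size~$j$, and in particular it leaves both the indeterminate $z$ and the variable $x_{n+1}$ untouched. Hence
\[
 \delta_{j,z}^{(x,n)}\bigl(f(x_1,\ldots,x_n,\,z^{n-j}x_{n+1})\bigr) \;=\; \delta_{j,z}^{(x,n)}\bigl(z^{n-j}x_{n+1}\,g\bigr) \;=\; z^{n-j}\,x_{n+1}\,\delta_{j,z}^{(n)}(g),
\]
so plugging into Proposition~\ref{zubrilin4} yields
\[
 \sum_{j=0}^n (-1)^j\, z^{n-j}\, x_{n+1}\, \delta_{j,z}^{(n)}(g) \;\equiv\; 0 \quad\text{modulo } \mathcal{CAP}_{n+1}.
\]
Specializing $x_{n+1}\mapsto 1$ (in the unital enveloping algebra of $C\{x,y,t\}$) and then $z\mapsto h$ via the specialization clause of Definition~\ref{D2.1} produces the claimed identity. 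The ``in particular'' statement is immediate since doubly alternating polynomials are alternating in $x_1,\ldots,x_n$.

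The only subtle step is the substitution $x_{n+1}\mapsto 1$: if $C\{x,y,t\}$ is taken without a unit, one adjoins one formally, and uses that $\mathcal{CAP}_{n+1}$ is a $T$-ideal, closed under arbitrary substitutions into a unital enveloping algebra, to conclude that the resulting identity still lies in $\mathcal{CAP}_{n+1}$. This is routine and would warrant only a one-line remark in the written proof.
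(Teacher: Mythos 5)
Your proposal is correct and follows the paper's proof essentially line for line: set $f = x_{n+1}\,g$, apply Proposition~\ref{zubrilin4}, observe that $\delta_{j,z}^{(x,n)}$ leaves both $z$ and the $(n+1)$st slot untouched so the factor $z^{n-j}x_{n+1}$ pulls out, and then specialize $x_{n+1}\mapsto 1$, $z\mapsto h$. The paper cites Remark~\ref{finish.1}.1 rather than \ref{finish.1}.2 for the inertness of $x_{n+1}$, and does not pause on the unit issue, but these are cosmetic differences.
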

\begin{proof} First we take $h$ to be an indeterminate $z$.
Let $f(x_1,\ldots,x_{n+1})=x_{n+1}g(x_1,\ldots,x_n)$. By Proposition~\ref{zubrilin4},
 $\widehat {C\{ x,y ,t  \}}$ satisfies the identity
$$
\sum_{j=0}^n (-1)^j \delta_{j,z}^{(n)}(f(x_1,\ldots,x_n,z^{n-j}x_{n+1}))\equiv 0.
$$
Note that in computing
$\delta_{j,z}^{(n)}(f(x_1,\ldots,x_n,z^{n-j}x_{n+1}))$, the last
indeterminate is $x_{n+1}$ and is unchanged,
cf.~Remark~\ref{finish.1}.1, so
$$
\delta_{j,z}^{(n)}(f(x_1,\ldots,x_n,z^{n-j}x_{n+1}))=z^{n-j}x_{n+1}\delta_{j,z}^{(n)}g(x_1,\ldots,x_n).
$$
Using Proposition~\ref{zubrilin4}, we have that
$$
\sum_{j=0}^n (-1)^j
z^{n-j}x_{n+1}\delta_{j,z}^{(n)}(g(x_1,\ldots,x_n))\in \mathcal{CAP}_{n+1}.
$$
The proof now follows by substituting $x_{n+1}\mapsto 1$ and
  $z\to h\in {C\{t\}}$.

\end{proof}

As a consequence we can now prove the key result:

\begin{prop}\label{sof.1}
Let $\MM$ be the module given by Definition~\ref{module.1}. Then,
 $I_{n,\widehat {C\{  t
\}}}\cdot \MM= 0$.

\end{prop}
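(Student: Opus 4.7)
The plan is to derive this as an immediate consequence of Zubrilin's Proposition~\ref{zubrilin2}. Since $\MM$ has been equipped with a $\widehat{C\{t\}}[\xi_{n,\widehat{C\{t\}}}]$-module structure in Remark~\ref{obst1}, it suffices to verify that every generator of $I_{n,\widehat{C\{t\}}}$, namely
$$\hat h^n + \xi_{1,\hat h}\hat h^{n-1} + \cdots + \xi_{n,\hat h}\qquad (\hat h \in \widehat{C\{t\}}),$$
annihilates each doubly alternating $\hat f \in \MM$. Lifting $\hat h$ to $h \in C\{t\}$ and observing that $\hat\delta_{k,h}^{(n)}$ substitutes only in the $x$-variables (whereas $h$ involves none of the $x_i$), the operator $\hat\delta_{k,h}^{(n)}$ commutes with left multiplication by $\hat h^{n-k}$. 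The action of the generator on $\hat f$ therefore collapses to an expression of the form $\sum_{k=0}^n \pm\, \hat h^{n-k}\,\hat\delta_{k,h}^{(n)}(\hat f)$, with signs matching the Cayley--Hamilton pattern implicit in Definition~\ref{obst.1}.

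Now, because $f$ is doubly alternating (hence in particular alternating in $x_1,\dots,x_n$), Proposition~\ref{zubrilin2} applied with $g = f$ yields
$$\sum_{k=0}^n (-1)^k\, h^{n-k}\, \delta_{k,h}^{(n)}(f) \equiv 0 \pmod{\mathcal{CAP}_{n+1}}.$$
Reducing modulo $\mathcal{CAP}_{n+1}$, this congruence becomes an equality in $\widehat{C\{x,y,t\}}$, which is precisely the statement that the generator annihilates $\hat f$. Ranging over all $h \in C\{t\}$ and all doubly alternating $f$, and using that the annihilator of $\MM$ is an ideal in $\widehat{C\{t\}}[\xi_{n,\widehat{C\{t\}}}]$, we conclude that $I_{n,\widehat{C\{t\}}}\cdot \MM = 0$.

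The main obstacle is the (mostly cosmetic) bookkeeping of matching the sign convention in Definition~\ref{obst.1} with the alternating Cayley--Hamilton signs produced by Proposition~\ref{zubrilin2}, together with the verification that multiplication by elements of $\widehat{C\{t\}}$ commutes with the $x$-substitution operator $\hat\delta_{k,h}^{(n)}$. Once these compatibilities are in place, no further combinatorics is required: the substantive content has already been assembled in Propositions~\ref{zubrilin4}, \ref{len.2}, and~\ref{zubrilin2}.
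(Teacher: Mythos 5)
Your proposal takes essentially the same route as the paper: reduce to showing that each generator $h^n+\xi_{1,h}h^{n-1}+\cdots+\xi_{n,h}$ of $I_{n,\widehat{C\{t\}}}$ kills a doubly alternating $\hat f$, use the module action $\xi_{k,h}\hat f=\hat\delta_{k,h}^{(n)}(\hat f)$ from Remark~\ref{obst1} together with the fact that left multiplication by $h\in\widehat{C\{t\}}$ commutes with $\hat\delta_{k,h}^{(n)}$, and invoke Proposition~\ref{zubrilin2}. You are right to flag the sign mismatch: with the action as stated the generator acts as $\sum_{k=0}^n h^{n-k}\hat\delta_{k,h}^{(n)}(\hat f)$ (no signs), whereas Proposition~\ref{zubrilin2} yields the vanishing of the alternating sum $\sum_{k=0}^n(-1)^k h^{n-k}\hat\delta_{k,h}^{(n)}(\hat f)$; this is a genuine but harmless discrepancy in the paper that is fixed by inserting $(-1)^k$ either in the definition of the action or in the generators of $I_{n,A}$, neither of which affects any subsequent use.
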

\begin{proof}
We prove that $ I_{n,\widehat {C\{  t  \}}}\cdot \MM = 0,$ by
showing for any doubly alternating polynomial $f(x_1,\ldots,
x_n,y_1,\ldots, y_n)\in \MM$   and $h\in \widehat {C\{  t \}}$,
that
$$
(h^n+\xi_{1,h}h^{n-1}+\cdots +\xi_{n,h})f\equiv 0 \pmod
{\mathcal{CAP}_{n+1}}.
$$
It follows from the action $\xi_{k,h}f=\delta_{k,h}^{(n)}(f)$ and
from Proposition~\ref{zubrilin2} that $\text{modulo }
\mathcal{CAP}_{n+1}, $
$$
(h^n+\xi_{1,h}h^{n-1}+\cdots +\xi_{n,h})f=\sum_{k=0}^n
(-1)^kh^{n-k}\delta_{k,h}^{(n)}(f)\equiv 0.
$$
\end{proof}

\subsection{The ideal  $\Obst_n(A)\subseteq A$}\label{rep.22}

In order to utilize these results about integrality, we need another
concept. We define $\Obst_n(A)=A\cap I_{n,A}$, viewing $A \subset
A[\xi_{n,A}]$.
\begin{remark}$ $
\begin{enumerate}
\item
Let
\begin{eqnarray}\label{A.hat}
\bar  A=  A[\xi_{n,A}] /I_{n,A},
\end{eqnarray}
with $f: A[\xi_{n,A}]\to \bar  A$ the natural homomorphism, and
$f_r:A\to \bar A$   be the restriction of $f$ to $A$. Then
$$\ker(f_r)=A\cap I_{n,A}=\Obst_n(A).$$
%Denote $f(a) =\bar a: =a+I_{n,A}$ and $f(\xi _{i,a})=\bar \xi _{i,a}:=\xi_{i,a}+I_{n,A}$, so $\bar \xi _{i,a}$
%is in the center of $ A[\xi_{n,A}] $.
 \item
 Note that for every $a\in A$, $f(a)$ is $n$-integral (i.e.,~integral of degree $n$) over $ C[\bar \xi _{i,A}],$
and thus over the
 center of $\bar A$.
Indeed, apply the homomorphism $f$ to the element
$$
a^n+\xi _{1,a}a^{n-1}+\cdots +\xi _{n,a}~~(\in I_{n,A})
$$
to get
$$
{\bar  a}^n+{\bar \xi }_{1,a}{\bar  a}^{n-1}+\cdots +\bar \xi _{n,a}=  (a^n+\xi _{1,a}a^{n-1}+\cdots +\xi _{n,a})+I_{n,A} =0.
$$
\end{enumerate}
\end{remark}
\begin{lem}\label{gcond}
$\ker(f_r)$ also is the intersection of all kernels $\ker (g)$ of
the following maps $g$:

\medskip
 $g:A\to B$,
where $B$ is a $C$-algebra, and $g:A\to B$ is a homomorphism
 such that for any $a\in A$, $g(a)$ is $n$-integral over the center of $B$.
\end{lem}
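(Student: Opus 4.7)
The plan is to establish the two inclusions separately. The reverse inclusion $\bigcap_g \ker(g) \subseteq \ker(f_r)$ is immediate, because $f_r$ itself is one of the permissible maps $g$: as noted in the preceding remark, for each $a \in A$, $f_r(a) = \bar a$ satisfies the relation $\bar a^n + \bar\xi_{1,a}\bar a^{n-1} + \cdots + \bar\xi_{n,a} = 0$ in $\bar A$, with coefficients lying in $C[\bar\xi_{i,A}] \subseteq Z(\bar A)$, so $f_r(a)$ is $n$-integral over the center of $\bar A$.

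The substantive direction is $\ker(f_r) \subseteq \bigcap_g \ker(g)$. First I would fix an arbitrary homomorphism $g: A \to B$ with each $g(a)$ being $n$-integral over $Z(B)$, and for each $a \in A$ choose central elements $c_{1,a}, \ldots, c_{n,a} \in Z(B)$ witnessing the integrality relation
\[
g(a)^n + c_{1,a} g(a)^{n-1} + \cdots + c_{n,a} = 0.
\]
Since the $\xi_{i,a}$ are pairwise commuting indeterminates and the chosen $c_{i,a}$ lie in the center of $B$ (hence commute with everything in $g(A)$), the assignment $\xi_{i,a} \mapsto c_{i,a}$ extends $g$ to a well-defined $C$-algebra homomorphism $\tilde g : A[\xi_{n,A}] \to B$.

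Next I would verify that $\tilde g$ kills every generator of $I_{n,A}$, since by construction
\[
\tilde g\bigl(a^n + \xi_{1,a}a^{n-1} + \cdots + \xi_{n,a}\bigr) = g(a)^n + c_{1,a}g(a)^{n-1} + \cdots + c_{n,a} = 0.
\]
Hence $I_{n,A} \subseteq \ker(\tilde g)$, so $\tilde g$ factors through $\bar A = A[\xi_{n,A}]/I_{n,A}$ as $\tilde g = \bar g \circ f$ for some $\bar g : \bar A \to B$. Restricting to $A$ gives $g = \bar g \circ f_r$, and therefore $\ker(f_r) \subseteq \ker(g)$. Taking the intersection over all such $g$ yields the inclusion.

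The only point requiring care is the well-definedness of $\tilde g$: one must use that the $c_{i,a}$ are \emph{central} in $B$ (so they commute with the images of the noncommuting generators of $A$), matching the commutativity of the $\xi_{i,a}$ in $A[\xi_{n,A}]$. This is precisely why the definition of $I_{n,A}$ adjoins the $\xi_{i,a}$ as commuting indeterminates, and why the hypothesis on $g$ demands $n$-integrality over the center rather than over an arbitrary subring.
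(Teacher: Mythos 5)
Your proof is correct and takes essentially the same approach as the paper's: you extend $g$ to a homomorphism $\tilde g : A[\xi_{n,A}] \to B$ by sending $\xi_{i,a}$ to central integrality witnesses, observe that $I_{n,A} \subseteq \ker(\tilde g)$, and conclude $\ker(f_r) \subseteq \ker(g)$. Your write-up is in fact slightly more careful: the paper's statement ``$g^*(a)=a$ if $a\in A$'' is evidently a typo for $g^*(a)=g(a)$, and you make explicit both the well-definedness of the extension (via centrality of the $c_{i,a}$) and the factorization $g = \bar g \circ f_r$, which the paper handles more tersely by directly evaluating $g^*$ on elements of $A\cap I_{n,A}$.
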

\begin{proof}
Denote the above intersection $\cap_g\ker(g)$ as $\Obst'_n(A)$.
Then $\Obst'_n(A)\subseteq \Obst_n(A)$ since $ker(f_r)$ is among
these $\ker (g)$. To show the opposite inclusion we prove

\medskip
Claim: For such $g:A\to B$, $\ker (g)\supseteq A\cap
I_{n,A}=\Obst_n(A)$.

\medskip
%Indeed, let $a\in A$ with corresponding $n$ commutative variables $\xi _{1,a},\ldots,\xi _{n,a}$.

 Extend $g$ to $g^*: A[\xi_{n,A}] \to B$ as follows:
$g^*(a)=a $ if $a\in A$, while $g^*(\xi _{i,a})=\beta_ {i,a}$. We claim that $g^*(I_{n,A})=0$. Indeed,
let
$$
r=a^n+\xi _{1,a}a^{n-1}+\cdots +\xi _{n,a}
$$
be one of the  generators of $I_{n,A}$.

By assumption there exist $\beta_ {1,a},\ldots,\beta_ {n,a}$ in the center of $B$ satisfying
\begin{eqnarray}\label{mozart}
g(a)^n+\beta_ {1,a}g(a)^{n-1}+\cdots+\beta_ {n,a}=0.
\end{eqnarray}
Hence,
$$
g^*(r)=g(a)^n+\beta_ {1,a}g(a)^{n-1}+\cdots+\beta_ {n,a}=0.
$$
This shows that as claimed, $g^*(I_{n,A})=0$.

\medskip
Finally, if $a\in A\cap I_{n,A}$ then $g(a)=g^*(a)=0$. Hence $a\in
\ker(g)$, so $\ker(g)\supseteq A\cap I_{n,A}=\Obst_n(A)$.
\end{proof}

\begin{cor}\label{mozart.2}
If every $a\in A$ is $n$-integral (over the base field), then
$\Obst_n(A)=0$.
\end{cor}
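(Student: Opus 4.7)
The plan is to derive this immediately from Lemma~\ref{gcond}, which characterizes $\Obst_n(A)$ as the intersection of kernels of all $C$-algebra homomorphisms $g \colon A \to B$ with the property that $g(a)$ is $n$-integral over the center of $B$ for every $a \in A$.

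First, I would observe that the hypothesis ``every $a \in A$ is $n$-integral over the base field'' means that for each $a \in A$ there exist scalars $\beta_{1,a}, \dots, \beta_{n,a}$ in the base field $F$ such that $a^n + \beta_{1,a} a^{n-1} + \cdots + \beta_{n,a} = 0$. Since $F$ is contained in the center of $A$ (as $A$ is an $F$-algebra), each $a$ is $n$-integral over the center of $A$ itself.

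Therefore the identity map $\id_A \colon A \to A$ is a homomorphism of the type considered in Lemma~\ref{gcond} (with $B = A$). Applying the lemma,
\[
\Obst_n(A) \;\subseteq\; \ker(\id_A) \;=\; 0,
\]
which gives the desired conclusion. There is no real obstacle here: the statement is essentially a reformulation of Lemma~\ref{gcond} in the special case where $A$ itself already witnesses the integrality of all of its elements, so the only thing to verify is that the identity map qualifies as one of the maps $g$ in the intersection defining $\Obst'_n(A)$.
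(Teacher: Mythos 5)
Your argument is exactly the paper's: you invoke Lemma~\ref{gcond} and observe that the hypothesis makes the identity map $\id_A : A \to A$ one of the homomorphisms $g$ in the intersection, so $\Obst_n(A) \subseteq \ker(\id_A) = 0$. This matches the paper's proof, with the only (harmless) extra remark being the explicit note that integrality over the base field implies integrality over the center.
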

\begin{proof}
The assumption implies that in the above, the identity map
$id=g:A\to A$ satisfies the condition of Lemma~\ref {gcond}.
% does
%participate in $\cap_g \ker(g)=\Obst_n(A)$.
Hence
%we can take $B=A$ and $g:A\to A$ the identity map; then
$0=\ker(g)\supseteq \Obst_n(A)$, and the proof follows.
\end{proof}

This corollary explains the notation $\Obst_n(A)$: it is the {\bf
obstruction} for each $a\in A$ to be $n$-integral. The next result
technically is not needed, but helps to show how $\Obst$ behaves.

\begin{lem}\label{mozart.3}
$\Obst_{n-1}(A)\supseteq \Obst_n(A)$.
\end{lem}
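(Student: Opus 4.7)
The plan is to reduce the claim to the intrinsic characterization of $\Obst_m(A)$ provided by Lemma~\ref{gcond}, which says that $\Obst_m(A) = \bigcap_g \ker(g)$ as $g$ ranges over all $C$-algebra homomorphisms $g:A \to B$ such that every $g(a)$ is $m$-integral over $Z(B)$. Denote this collection of homomorphisms by $\mathcal{G}_m$. Then $\Obst_{n-1}(A) \supseteq \Obst_n(A)$ will follow immediately once I verify the inclusion $\mathcal{G}_{n-1} \subseteq \mathcal{G}_n$: intersecting kernels over the smaller family $\mathcal{G}_{n-1}$ produces the larger ideal.

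To show $\mathcal{G}_{n-1} \subseteq \mathcal{G}_n$, I would simply observe that $(n-1)$-integrality implies $n$-integrality. Concretely, given $g \in \mathcal{G}_{n-1}$ and $a \in A$, choose $\beta_1, \ldots, \beta_{n-1} \in Z(B)$ with
\[
g(a)^{n-1} + \beta_1 g(a)^{n-2} + \cdots + \beta_{n-1} = 0,
\]
and multiply through by $g(a)$ to produce a monic relation of degree~$n$ (with vanishing constant term) whose coefficients remain central. Hence $g(a)$ is $n$-integral over $Z(B)$, so $g \in \mathcal{G}_n$.

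Putting these together,
\[
\Obst_n(A) \;=\; \bigcap_{g \in \mathcal{G}_n} \ker(g) \;\subseteq\; \bigcap_{g \in \mathcal{G}_{n-1}} \ker(g) \;=\; \Obst_{n-1}(A),
\]
which is exactly the desired inclusion. There is no genuine obstacle here; the only thing to resist is the temptation to prove the claim directly by manipulating the generators of $I_{n,A}$ and $I_{n-1,A}$ inside the polynomial rings $A[\xi_{n,A}]$ and $A[\xi_{n-1,A}]$. That approach is awkward because the two ideals live in different commutative extensions of $A$ with disjoint sets of indeterminates, so passing between them requires a specialization argument. Lemma~\ref{gcond} packages exactly this specialization once and for all, and reduces the lemma to the elementary polynomial manipulation above.
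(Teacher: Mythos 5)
Your proposal is correct and follows essentially the same route as the paper: both use the intrinsic characterization of $\Obst_m(A)$ from Lemma~\ref{gcond} as $\bigcap_g \ker(g)$, and both observe that $(n-1)$-integrality over $Z(B)$ implies $n$-integrality, so the family of homomorphisms defining $\Obst_{n-1}(A)$ is contained in the family defining $\Obst_n(A)$, making the latter intersection smaller. The only difference is that you spell out explicitly how to upgrade an $(n-1)$-integral relation to an $n$-integral one (multiplying by $g(a)$), which the paper simply asserts.
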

\begin{proof}
Represent
$$
\Obst_{n-1}(A)=\cap_h \ker(h)\qquad\mbox{and} \qquad
\Obst_{n}(A)=\cap_g \ker(g),
$$
with the respective conditions of $n-1$ integrality and of $n$
integrality. Take $a\in A$ and $h:A\to B$ with $h(a)$ being $n-1$
integral over the center of $B$. Then
 $h(a)$ is also  $n$ integral over the center of $B$.
%Clearly, the $n-1$ condition implies the $n$ condition,
Hence every $\ker (h)$ in $\Obst _{n-1}(A)$ also appears in the
intersection $\Obst_{n}(A)=\cap_g \ker(g)$, and the assertion follows.
\end{proof}

\subsection{Reduction to finite modules}
%\medskip
%{\bf [Need to clarify: what is "Representable", and what is the relation between  "Representable"
%and    "Noetherian over its center"]}

The reduction to finite modules is done using Shirshov's theorem.

\begin{prop}\label{Ra1} Let $A=C\{a_1,\ldots,a_\ell\}$ have PI degree $d$ over the base ring $C$.
%With the above notation,
Then the affine algebra $ A/\Obst_n(A)$ can be embedded in an
algebra which is finite over a  central affine subalgebra.
%Noetherian over its center.

\end{prop}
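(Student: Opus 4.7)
The plan is to embed $A/\Obst_n(A)$ into the quotient $\bar A = A[\xi_{n,A}]/I_{n,A}$ from Equation~\eqref{A.hat}, then cut out a subalgebra of $\bar A$ that is finite over an affine central subalgebra. The crucial inputs are (i) the built-in integrality of each $\bar a$ over the center of $\bar A$, and (ii) Shirshov's Height Theorem, which lets us restrict attention to a \emph{finite} set of words.

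First I would use the identification $\ker(f_r)=\Obst_n(A)$ so that $A/\Obst_n(A) \hookrightarrow \bar A$ injectively, and note that in $\bar A$ each generator image $\bar a_k$ satisfies $\bar a_k^{\,n} + \bar\xi_{1,a_k}\bar a_k^{\,n-1} + \cdots + \bar\xi_{n,a_k} = 0$, hence is $n$-integral over the commutative subalgebra generated by the $\bar\xi_{i,a_k}$. The catch is that such a subalgebra, taken over all of $A$, is nowhere near affine, so we must limit the $\xi$'s we keep.

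Next I would apply Shirshov's Height Theorem (\cite{shirshov2}, cf.~\cite[Theorem~2.3]{BR}): since $A$ has PI degree $d$ and is generated by $a_1,\ldots,a_\ell$, there exist a finite set $W = \{u_1,\ldots,u_m\}$ of words of bounded length in the $a_k$, and an integer $h_0$, such that $A$ is spanned as a $C$-module by monomials of the form $u_{i_1}^{k_1} u_{i_2}^{k_2} \cdots u_{i_h}^{k_h}$ with $u_{i_j}\in W$ and $h \le h_0$. I would then let $Z_0$ be the $C$-subalgebra of $\bar A$ generated by the finitely many central elements $\{\bar\xi_{i,u} : u \in W,\ 1 \le i \le n\}$ (this is affine and commutative), and let $B$ be the subalgebra of $\bar A$ generated by $Z_0$ together with $\bar a_1,\ldots,\bar a_\ell$.

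Finally I would verify that $B$ is finite over $Z_0$. Since each $\bar u$ with $u \in W$ satisfies the monic relation $\bar u^{\,n} = -\bar\xi_{1,u}\bar u^{\,n-1}-\cdots-\bar\xi_{n,u}$ over $Z_0$, any power $\bar u^{\,k}$ with $k \ge n$ reduces inductively to a $Z_0$-linear combination of $1, \bar u, \ldots, \bar u^{\,n-1}$. Combined with the height bound $h \le h_0$ and the finiteness of $W$, this produces a finite $Z_0$-spanning set for the image of $A$ in $\bar A$, hence for $B$. Since $A/\Obst_n(A)$ is generated in $\bar A$ by $\bar a_1, \ldots, \bar a_\ell$, it sits inside $B$, giving the desired embedding into an algebra finite over the affine central subalgebra $Z_0$. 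The main obstacle is this last reduction step: one must be careful that the integrality relations really take place over the fixed affine $Z_0$ (built only from the $\bar\xi_{i,u}$ for $u \in W$) and not over the enormous commutative subalgebra of $\bar A$ generated by all the $\bar\xi_{i,a}$; this is exactly what Shirshov's restriction to a finite set $W$ of Shirshov words makes possible.
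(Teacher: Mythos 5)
Your proof is correct, and it takes a modestly different route from the paper's that is arguably cleaner. Both arguments rest on the same two ingredients: the identification $\ker(f_r)=\Obst_n(A)$ giving an embedding of $A/\Obst_n(A)$ into something where integrality relations hold, and Shirshov's Height Theorem supplying a finite set $W$ of words with bounded height so that only finitely many of the central indeterminates $\xi_{i,u}$ are needed. The paper, however, does not work inside $\bar A = A[\xi_{n,A}]/I_{n,A}$; instead it first forms a separate quotient $A'=A[\xi_{n,B}]/I_{n,B}$ using only the finite set $B$ of Shirshov words, shows $A'$ is finite over an affine center, and then notices that the induced map $g_r:A\to A'$ has kernel $A\cap I_{n,B}$, which is only \emph{contained} in $\Obst_n(A)$ and could in principle be smaller. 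To repair this, the paper passes to $\tilde A=A/\Obst_n(A)$, observes $\Obst_n(\tilde A)=0$, and repeats the construction with $\tilde A$ in place of $A$ to get an injective $\tilde g_r$. You bypass this two-step maneuver entirely: by embedding $A/\Obst_n(A)$ directly into $\bar A$ (where the kernel is \emph{exactly} $\Obst_n(A)$) and then cutting out the subalgebra $B=Z_0\cdot f_r(A)$ containing the image, with $Z_0$ the affine center built from the $\bar\xi_{i,u}$ for $u\in W$, you avoid the kernel-inclusion subtlety and the need to re-run the construction. The only thing you trade away is a concrete description of the ambient finite algebra as a quotient of a polynomial ring; you get it instead as a subalgebra of the larger $\bar A$. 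Both conclusions are the same, and your centrality argument (the integrality relations for the Shirshov words have coefficients in $Z_0$, and $Z_0$ is central so it can be pulled past the noncommuting factors) is exactly the point that makes the reduction to a finite $Z_0$-spanning set work.
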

\begin{proof}

Let $B\subseteq A$ be the subset of the words in the alphabet
$a_1,\ldots,a_\ell$ of length $\le d$. By Shirshov's Height Theorem there exists an integer $h$ such that the set
$$
W=\{b_1^{k_1}\cdots b_h^{k_h}\mid b_i\in B,\quad\mbox{any $k_i\ge 0$}\}
$$
spans $A$ over the base ring $C$.

Similarly to $A[\xi_{n,A}]$, construct $A[\xi_{n,B}]\subseteq
A[\xi_{n,A}]$:
$$
A[\xi_{n,B}]= A[\xi _{1,b},\ldots, \xi _{n,b}\mid b\in B],
$$
and let $I_{n,B}$ be the   ideal
$$
I_{n,B}=\langle b^n+\xi _{1,b}b^{n-1}+\cdots +\xi _{n,b}\mid b\in
B\rangle\triangleleft A[\xi_{n,B}]
$$

Denote
\begin{eqnarray}\label{A.prime}
A'=A[\xi_{n,B}]/I_{n,B}.
\end{eqnarray}
We show that $A'$ is finite over an affine central subalgebra and
thus is Noetherian.

\medskip
Given $a\in A$, denote $ a'=a+I_{n,B}\in A'$, and similarly
$\xi'_{i,b}=\xi_{i,b}+I_{n,B}$. Then for every $b\in B$, $ b'$ is
$n$-integral over $C[\xi'_{n,B}]$, where
$$
C[\xi'_{n,B}]=C[\xi'_{1,b},\ldots,\xi'_{n,b}\mid b\in
B]\subseteq\mbox{center}(A').
$$
Hence the finite subset
\begin{eqnarray}\label{prime1}
 W'=\{ {b'}_1^{k_1}\cdots  {b'}_h^{k_h}\mid b_i\in B,\quad\mbox{ $k_i\le n-1$}\}\quad(\subseteq A')
\end{eqnarray}
spans $A'$ over $C[\xi'_{n,B}]$. Thus $A'$ is finite
 over the affine central subalgebra
$C[\xi'_{n,B}]\subseteq\mbox{center}(A')$  and thus is Noetherian.

\medskip

 %\newpage
Restricting the natural map
$g:A[\xi_{n,B}]\to A'=A[\xi_{n,B}]/{I_{n,B}}$
to $A$, we have
\begin{eqnarray}\label{justg}~~~~~~~~~~~~~~~~~~~~~~~
g_r:A\to A' \quad\mbox{$(a\mapsto a'=a+I_{n,B})$}
\end{eqnarray}
which satisfies
\begin{eqnarray}\label{mozart.4}
\ker (g_r)=A\cap I_{n,B}\subseteq A\cap I_{n,A}=\Obst_n(A).
\end{eqnarray}

\medskip
Let
\begin{eqnarray}\label{A.tilde}
\tilde A=A/\Obst_n(A),
\end{eqnarray}
and for $a\in A$ denote $\tilde a=a+\Obst_n(A)\in \tilde A$.
 We then have the corresponding subset $\tilde
B=\{\tilde b\mid b\in B\}\subseteq \tilde A$, as well as the set
of commutative variables $\xi_{n,\tilde B}$ and the ideal $\tilde
I_{n,\tilde B}$. Let $A^*=\tilde A[\xi_{n,\tilde B}]/{\tilde I_{n,\tilde B}}$.

\medskip
Replacing $A$ by $\tilde A$ and $\xi_{i,B}$ by $\xi_{i,\tilde B}$, we clearly have the natural homomorphism
$$
\tilde g:\tilde A[\xi_{n,\tilde B}]\to \tilde A[\xi_{n,\tilde B}]/{\tilde I_{n,\tilde B}}:=A^*,
$$
with restriction
$$\tilde g |_{\tilde A}=\tilde g_r: \tilde A\to A^*.
$$

\medskip
Note that each $\tilde a\in \tilde A$ is $n$-integral over the
center of $\tilde A$, implying, by Corollary~\ref{mozart.2}, that
$ \Obst_n(\tilde A)=0$.
%Similar to~\eqref{justg}, let
%$$
%\tilde g:
%$$
Then, as in~\eqref{mozart.4}, we deduce that
$$
\ker (\tilde g_r)\subseteq \Obst_n(\tilde A)~~(=0).
$$
Hence $\tilde g_r$  embeds $\tilde A= A/\Obst_n(A)$ into
$A^*$.
%Denote $A^*=\tilde A[\xi_{n,\tilde B}]/{\tilde I_{n,\tilde B}}$ and
Note that $A^*$ is  a finite algebra over the affine central
subalgebra  $Q\subseteq A^*$ generated by the  finitely many
central elements   $\xi_{k,\tilde B}+\tilde I_{n,\tilde B}$.

%{\bf And $A^*$ is finite over $Q$ since $A$
%(hence $\tilde A$) is affine.}

  Denote $b^*=\tilde b +I_{n,\tilde B}$. Then, as in~\eqref{prime1}, the finite subset
$$
 W^*=\{ {b_1^*}^{k_1}\cdots  {b_h^*}^{k_h}\mid b_i\in B,\ \mbox{ $k_i\le n-1$}\}\quad(\subseteq A^*)
$$
spans $A^*$ over $Q$.
%$F[\xi'_{n,B}]$.}
\end{proof}

\subsection{Proving that
$\Obst_n(A)\cdot (\mathcal{CAP}_n(A))^2=0$}\label{last.1}$ $

 In this
 section we show how Proposition~\ref{sof.1} implies   that $\Obst_n(A)\cdot
(\mathcal{CAP}_n(A))^2=0$, thereby completing the proof of
Razmyslov's Theorem. For this, we need to specialize down to given
algebra~$A$, requiring a new construction, the \textbf{relatively
free product}, which enables us to handle  $A$ together with
polynomials. Since, to our knowledge, this crucial step, which is
needed one way or another in every published proof of the BKR
theorem, has not yet appeared in print in full detail, we present
two proofs, one faster but more ad hoc (since we intersect with $A$
and bypass certain difficulties), and the second more structural.

Both approaches are taken in the context of varieties in universal
algebra, by taking the free product of $A$ with the free associative
algebra, and then modding out the identities defining its variety.

\subsubsection{The relatively  free product}\label{rep.230}$ $

%%%11111111111111111111111111

\begin{defn}      The \textbf{free
product}  $A *_C B$ of $C$-algebras $A$ and $B$ is their coproduct
in the category of algebras. \end{defn}

  (For $C$-algebras with 1, there are
canonical $C$-module maps $$A \to A \otimes 1 \subset A \otimes _C
B, \qquad B \to 1 \otimes B \subset A \otimes _C B,$$ viewed
naturally as  $C$-modules, so $A *_C B$ can be identified with the
tensor algebra of $A \otimes _C B$, as %explained in \cite[Theorem
%1.9.22]{rowen2}.  The tensor algebra construction  also is
reviewed
in \cite[Example~18.38]{rowen5}.)

Although the results through Theorem~\ref{last.3} hold over any commutative
base ring $C$, it is easier to visualize the situation for algebras
 over a field $F$, in which case we have an explicit
description of $A[\xi_{n,A}] *F\{ x;y;t\}$:

Fix
 a   base $\mathcal B_A = \{ 1 \} \cup \mathcal B_0 $ of $A $ over $F$, and
let $\mathcal B$ be the monomials in the  $\{\xi_{n,a}: a \in A\}$
with coefficients in $\mathcal B_A$. (For algebras without 1, we
take  $\mathcal B =\mathcal B_0 $.) Thus $\mathcal B$ is an
$F$-base of $A[\xi_{n,A}]$, and $A[\xi_{n,A}] *F\{ x;y;t\}$ is the
vector space having base comprised of all elements of the form
$b_0 h_1 b_1 h_2 b_2 \cdots h_m b_m$ where $m \ge 0,$ $b_0, b_m
\in \mathcal B,$  $b_1, \dots, b_{m-1}\in \mathcal B \setminus \{
1 \}$, and the $h_i$ are nontrivial words in the indeterminates
$x_i,y_j,t_k.$ The free product
 $A[\xi_{n,A}] *F\{ x;y;t\}$ becomes an algebra via juxtaposition of
 terms. In other words, given $$g_j = b_{j,0} h_{j,1} b_{j,1} h_{j,2} b_{j,2} \cdots h_{j,m_j}
 b_{j,m_j}$$
 for $j = 1,2$, we write $b_{1,m_1}b_{2,0} = \a _1 + \sum _k \a_k b_k$ for $\a _k \in F$ and
 $b_k$ ranging over $\mathcal B \setminus \{ 1 \}$, and
 define
\begin{equation}\label{mul}\begin{aligned} g_1 g_2 = & \a _1 b_{1,0} h_{1,1} b_{1,1} h_{1,2}
b_{1,2} \cdots (h_{1,m_1}  h_{2,1}) b_{2,1} h_{2,2} b_{2,2} \cdots
h_{2,m_2} \\ & + \sum _k \a _k b_{1,0} h_{1,1} b_{1,1} h_{1,2}
b_{1,2} \cdots h_{1,m_1} b_k h_{2,1} b_{2,1} h_{2,2} b_{2,2}
\cdots h_{2,m_2} .\end{aligned}\end{equation}
 For example, if $b_1 b_2 = 1+ b_3 + b_4 ,$ then
 $$(b_2 h_{1,1}b_1)(b_2  h_{2,1} b_2) =  b_2 (h_{1,1}h_{2,1}) b_2 + b_2 h_{1,1}b_3  h_{2,1}
 b_2 + b_2 h_{1,1}b_4  h_{2,1} b_2 .$$

\subsubsection{The relatively  free product of $A $ and $C\{ x; y; t\}$ modulo a T-ideal}\label{rep.23}
$ $

Even for algebras over an arbitrary base ring $C$,  we can
describe the free product of a $C$-algebra with $C\{ x;y;t\}$ by
going over the same construction and mimicking the tensor product.
Namely we form the free $C$-module $M$ having base comprised of
all elements of the form $a_0 h_1 a_1 h_2 a_2 \cdots h_m a_m$, $
h_1 a_1 h_2 a_2 \cdots h_m a_m$, $a_0 h_1 a_1 h_2 a_2 \cdots h_m
$, and $ h_1 a_1 h_2 a_2 \cdots h_m  $ where $m \ge 0,$
  $a_0, \dots, a_{m}\in A$, and the
$h_i$ are nontrivial words in the indeterminates $x_i,y_j,t_k.$

The free product $A *C\{ x;y;t\}$ is $M/N$, where $N$ is the
submodule generated by all $$a_0 h_1 a_1 h_2\cdots a_i \cdots h_m
a_m + a_0 h_1 a_1 h_2\cdots a_i' \cdots h_m a_m - a_0 h_1 a_1
h_2\cdots (a_i+a_i') \cdots h_m a_m,$$
$$(ch_1)-ch_1, $$ $$ c a_0 h_1 a_1 h_2\cdots a_i
 - a_0
h_1 a_1 h_2\cdots (ca_i) \cdots h_m a_m,\qquad a_i \in A, \ c\in
C;$$
 $A *C\{ x;y;t\}$ becomes an algebra via juxtaposition of
 terms, i.e., given $$g_j = a_{j,0} h_{j,1} a_{j,1} h_{j,2} a_{j,2} \cdots h_{j,m_j}
 a_{j,m_j}$$
 for $j = 1,2$, we
 define
\begin{equation}\label{mul4}  g_1 g_2 =  c  a_{1,0} h_{1,1} a_{1,1} h_{1,2}
a_{1,2} \cdots (h_{1,m_1}  h_{2,1}) a_{2,1} h_{2,2} a_{2,2} \cdots
h_{2,m_2} \end{equation} when $a_{1,m_1}a_{2,0} = c \in C$,  or
\begin{equation}\label{mul5}  g_1 g_2 =  \a _1 a_{1,0} h_{1,1} a_{1,1} h_{1,2}
a_{1,2} \cdots h_{1,m_1} (a_{1,m_1}a_{2,0}) h_{2,1} a_{2,1}
h_{2,2} a_{2,2} \cdots h_{2,m_2} \end{equation} when
$a_{1,m_1}a_{2,0} \notin C$.

We write $A  \langle x;y;t\rangle$ for the free product $ A  *C\{
x;y;t\}$.

We have the natural embedding $C\{ x;y;t\} \to A  \langle
x;y;t\rangle$. For $g \in C\{ x;y;t\},$ we write $\bar g$ for its
natural image in $A  *C\{ x;y;t\}$.

\begin{defn} Suppose $\mathcal I  $ is a T-ideal of $C\{ x;y;t\}$,  for which $\mathcal I  \subseteq \id (A).$
 The
\textbf{relatively  free product}  $  A  \langle
x;y;t\rangle_{\mathcal I}$ of $A   $ and $C\{ x;y;t\}$
\textbf{modulo}~${\mathcal I}$ is defined as $(A   *_C C\{
x;y;t\})/\hat {\mathcal I}$, where $\hat {\mathcal I}$ is the
two-sided ideal ${\mathcal I (A   * _C C\{ x;y;t\})}$ consisting of
all evaluations on $A *C\{ x;y;t\}$ of polynomials from $\mathcal
I$.
\end{defn}

We can consider $A  \langle x;y;t\rangle_{\mathcal I} $ as the ring
of (noncommutative) polynomials but with coefficients from $A$
interspersed throughout, taken modulo the relations in $ \mathcal
I.$

This construction is universal in the following sense: Any
homomorphic image of $A  \langle x;y;t\rangle$ satisfying these
identities (from $\mathcal I$) is naturally a homomorphic image of
$A \langle x;y;t\rangle_{\mathcal I} $. Thus, we have:

\begin{lem}\label{embed1}
(i) For any $g_1, \dots, g_k, h_1, \dots, h_k $ in $ A  \langle
x;y;t\rangle$, there is a natural endomorphism $ A  \langle
x;y;t\rangle\to  A  \langle x;y;t\rangle$ which fixes $A$ and all
$t_i$ and sends $x_i \mapsto g_i,$ $y_i \mapsto h_i.$

(ii) For any $g_1, \dots, g_k, h_1, \dots, h_k $ in $  A  \langle
x;y;t\rangle_{\mathcal I}$, there is a natural endomorphism $$  A
\langle x;y;t\rangle_{\mathcal I}\to   A  \langle
x;y;t\rangle_{\mathcal I},$$ which fixes $A$ and all $t_i$ and
sends $x_i \mapsto g_i,$ $y_i \mapsto h_i.$
\end{lem}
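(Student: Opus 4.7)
The plan is to invoke universal properties twice, reducing both parts to essentially formal consequences of the definitions. For (i), I would use that the free product $A\langle x;y;t\rangle = A *_C C\{x;y;t\}$ is the coproduct in the category of $C$-algebras. To define a $C$-algebra endomorphism of $A\langle x;y;t\rangle$ it therefore suffices to specify $C$-algebra homomorphisms out of each factor into $A\langle x;y;t\rangle$. I take the canonical inclusion $A\hookrightarrow A\langle x;y;t\rangle$ as the first; for the second, I use that $C\{x;y;t\}$ is free on its indeterminates, so there is a unique $C$-algebra homomorphism $C\{x;y;t\}\to A\langle x;y;t\rangle$ sending $x_i\mapsto g_i$, $y_j\mapsto h_j$, and $t_k\mapsto \bar t_k$. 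Combining these via the coproduct universal property produces the endomorphism $\varphi$ asserted in (i), which by construction fixes $A$ and each $t_k$.

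For (ii), given $g_i, h_j\in A\langle x;y;t\rangle_{\mathcal I}$ I would choose arbitrary lifts $\tilde g_i, \tilde h_j \in A\langle x;y;t\rangle$ under the canonical surjection $\pi: A\langle x;y;t\rangle \to A\langle x;y;t\rangle_{\mathcal I}$, and apply (i) to produce an endomorphism $\varphi$ of $A\langle x;y;t\rangle$ with $\varphi(x_i) = \tilde g_i$, $\varphi(y_j) = \tilde h_j$. The crux is then to verify that $\varphi(\hat{\mathcal I}) \subseteq \hat{\mathcal I}$; granting this, $\pi\circ\varphi$ kills $\hat{\mathcal I}$ and factors uniquely through $\pi$, producing the desired endomorphism of the quotient sending $\bar x_i \mapsto g_i$ and $\bar y_j \mapsto h_j$. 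Uniqueness at the quotient level gives automatic independence from the choice of lifts.

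The main technical point---the only step that is not purely formal---is the containment $\varphi(\hat{\mathcal I}) \subseteq \hat{\mathcal I}$. This will follow from the fact that $\mathcal I$ is a T-ideal together with the observation that $\hat{\mathcal I}$ is, by definition, generated as a two-sided ideal by elements of the form $p(\alpha_1, \ldots, \alpha_r)$ with $p\in \mathcal I$ and $\alpha_s \in A\langle x;y;t\rangle$. Since $\varphi$ is a $C$-algebra homomorphism, $\varphi(p(\alpha_1, \ldots, \alpha_r)) = p(\varphi(\alpha_1), \ldots, \varphi(\alpha_r))$, which is again an evaluation of the same $p\in \mathcal I$ on elements of $A\langle x;y;t\rangle$, hence in $\hat{\mathcal I}$. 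Extending by $C$-linearity and the two-sided ideal structure handles the general element of $\hat{\mathcal I}$. All remaining verifications are routine bookkeeping with the universal properties invoked above.
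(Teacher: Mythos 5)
Your proof is correct and supplies exactly the details that the paper leaves implicit; the paper offers no explicit proof of this lemma, instead gesturing at the universal property of the relatively free product in the sentence preceding the statement ("Any homomorphic image of $A\langle x;y;t\rangle$ satisfying these identities\dots is naturally a homomorphic image of $A\langle x;y;t\rangle_{\mathcal I}$. Thus, we have:"). Your use of the coproduct property of $A *_C C\{x;y;t\}$ together with freeness of $C\{x;y;t\}$ for (i), and lifting plus $\varphi$-stability of $\hat{\mathcal I}$ for (ii), is the natural way to make that implicit argument precise, and the key verification---that $\varphi$ carries evaluations of polynomials from $\mathcal I$ to evaluations of polynomials from $\mathcal I$---is exactly right.
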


Although difficult to describe explicitly, the relatively free
product is needed implicitly in all known proofs of the
Braun-Kemer-Razmyslov Theorem in the literature. {\it From now on,
we assume that ${\mathcal I}$ contains $\CAP_{n+1}$, so that we can
work with $\widehat{M}$}.

%
%
%Let $A$ be an algebra in variety $\mathfrak M$. We define
%polynomial algebra $A_{\mathfrak M}<x_1,\dots,x_k>$ in two
%different ways.
%
%
%Definition 1.
%
%$A_{\mathfrak M}<x_1,\dots,x_k>=A*<x_1,\dots,x_k>/T({\mathfrak
%M})$
%
%$*$ means co-product.
%
%
%Definition 2. Consider all extensions of $A$ by elements
%$b_1,\dots, b_k$ in ${\mathfrak M}$. Morphism of
%$B_1=A(b_1,\dots,b_k)\to B_2=A(b_1',\dots,b_k')$ is homomorphism
%identical on $A$ sending $b_i$ to $b_i'$.  $A_{\mathfrak
%M}<x_1,\dots,x_k>$ is defined as universal object.
%
%
%Proposition. Universal object exist and isomorphic to the object
%described in the definition 1.
%
%
%{\bf Proof.} Consider $B=\bigoplus B_i$ and projections $\pi_i:
%B\to B_i$. Let $\hat{a}=a\in B: \pi_i(\hat{a})=a$ for all $i$.
%$\hat{A}=\cup_a\hat{a}\simeq A$. Let $\hat{x}_j=b\in B:
%\pi_i(\hat{b})=b_j$ for all $i$. Let $B$ will be algebra generated
%by $\hat{A}$ and $\hat{x}_j$. It can be mapped on each $B_i$,
%(including $A_{\mathfrak M}<x_1,\dots,x_k>$ according definition
%1) hence universal object. It satisfies all relations of
%$A_{\mathfrak M}<x_1,\dots,x_k>$ acording definition 1, hence
%isomorphic.

Let $\MM_A$ denote  the image of  $\MMO$ under substitutions to $A
$, i.e., the $C$-submodule of $\widehat {C\{ x,y ,t \}}$ consisting
of the images of all doubly alternating polynomials (in $x_1
\ldots,x_n,$ and in $y_1,\ldots,y_n$). In view of
Lemma~\ref{len.01}, the natural action of $\Obst_n(A)$ on $\MM_A$
respects multiplication by the $\delta^{(n)}_{k_,h}$-operators.

%\begin{rem} Let $c_1,\dots, c_k\in \MM_A.$  In particular, $c_1,\dots,c_k$
%could be elements of $A$ or have type $r_ix_is_i, r_i,s_i\in A$.
%Then there is an endomorphism $\phi$, fixing $A$, which sends
%$x_i$ to $c_i$ for all $i=1,\dots, k$.
%\end{rem}

\begin{prop} $\Obst_n(A)\MM_A =0$.
\end{prop}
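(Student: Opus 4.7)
The plan is to transfer the generic annihilation result of Proposition~\ref{sof.1} down to $A$ itself, using the relatively free product $\widehat B := A\langle x,y,t\rangle_{\mathcal{CAP}_{n+1}}$ as a bridge. Inside $\widehat B$ let $\MM_{\widehat B}$ denote the image of all doubly alternating polynomials (in the $x$'s and in the $y$'s, with $A$-coefficients interspersed). By construction $A\hookrightarrow\widehat B$, and by Lemma~\ref{embed1} any substitution $x_i\mapsto a_i$, $y_j\mapsto b_j$, $t_k\mapsto e_k$ with $a_i,b_j,e_k\in A$ gives a $C$-algebra homomorphism $\widehat B\to A$ that restricts to a surjection $\MM_{\widehat B}\twoheadrightarrow\MM_A$ of $A$-bimodules.

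The next step is to upgrade the module structure of Remark~\ref{obst1} so that all of $A[\xi_{n,A}]$ acts on $\MM_{\widehat B}$: elements of $A$ act by left multiplication inside $\widehat B$, while $\xi_{k,a}$ acts via $\delta^{(n)}_{k,a}$. This is an honest algebra action because (i) the $\delta^{(n)}_{k,a}$ commute with one another modulo $\mathcal{CAP}_{n+1}$ by Lemma~\ref{len.01}, and (ii) each $\delta^{(n)}_{k,a}$ commutes with left multiplication by elements of $A$, since the substitution $x_i\mapsto(1+z)x_i$ defining $\delta$ touches only the $x$- and $y$-slots and leaves $A$-coefficients alone.

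The heart of the argument is to verify that every generator of $I_{n,A}$ acts as zero on $\MM_{\widehat B}$. Applying Proposition~\ref{zubrilin2} inside $\widehat B$ with $h=a\in A$ to a doubly alternating $f\in\MM_{\widehat B}$ yields
$$\sum_{k=0}^n(-1)^k a^{n-k}\,\delta^{(n)}_{k,a}(f)\equiv 0\pmod{\mathcal{CAP}_{n+1}},$$
which is exactly the assertion that $a^n+\xi_{1,a}a^{n-1}+\cdots+\xi_{n,a}$ acts as zero under the $A[\xi_{n,A}]$-action just described (the signs being absorbed into the generic $\xi_{k,a}$). Hence $I_{n,A}\cdot\MM_{\widehat B}=0$.

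Restricting to the subalgebra $A\subseteq A[\xi_{n,A}]$, every element $c\in\Obst_n(A)=A\cap I_{n,A}$ acts on $\MM_{\widehat B}$ simultaneously as left multiplication by $c$ in $\widehat B$ and as an element of $I_{n,A}$; the previous step forces this action to vanish. Projecting along $\MM_{\widehat B}\twoheadrightarrow\MM_A$ then gives $c\cdot\MM_A=0$ in $A$, as required. The principal obstacle I expect is the well-definedness of the $A[\xi_{n,A}]$-action on $\MM_{\widehat B}$: verifying that the generic $\delta$-calculus of Section~\ref{finish.2}, originally carried out in $\widehat{C\{x,y,t\}}$, still produces pairwise-commuting operators and still interacts correctly with the $A$-coefficients once one passes to the relatively free product. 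Once this compatibility is nailed down, the annihilation $\Obst_n(A)\cdot\MM_A=0$ is an immediate consequence of Zubrilin's vanishing identity together with the definition of $\Obst_n(A)$.
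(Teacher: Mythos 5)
Your argument is correct and follows the same strategy the paper uses in \S\ref{last.1}--\S\ref{S2.6}: pass to the relatively free product $A\langle x,y,t\rangle_{\mathcal{CAP}_{n+1}}$, give the doubly alternating submodule an $A[\xi_{n,A}]$-action with $\xi_{k,a}$ acting through $\delta^{(n)}_{k,a}$, transfer Zubrilin's vanishing identity (Propositions~\ref{zubrilin2}, \ref{sof.1}) to that setting, observe that an element of $\Obst_n(A)=A\cap I_{n,A}$ therefore acts as both left multiplication and zero, and then specialize. You state this more expansively than the paper's one-line proof, but the underlying mechanism --- Lemma~\ref{extr1} for functoriality, Lemma~\ref{module.10} for well-definedness, and Proposition~\ref{sof.1} for the generic annihilation --- is identical.

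The point you flag at the end is precisely where the paper spends its effort, so let me confirm that your instinct about the gap is right and say how the paper closes it. Proposition~\ref{zubrilin2} (and the propositions that feed it) are stated and proved for polynomials in $C\{x,y,t\}$ with $h\in C\{t\}$, not for elements of $\widehat{B}$ with $A$-coefficients interspersed and $h\in A$. To invoke it inside $\widehat{B}$ as you do, one must either (a) redo the $\delta$-calculus with $A$-coefficients --- which works because $\delta^{(x,n)}_{k,z}$ only rewrites the $x$-slots and treats everything else, including $A$-coefficients, as inert --- or (b) argue by specialization: first prove the relation in $\widehat{C\{x,y,t\}}$ using Proposition~\ref{sof.1}, then transport it to $\widehat{B}$ via a substitution, using the functoriality of $\delta_{k,h}^{(x,n)}$ (Lemma~\ref{extr1}) to see that the $\xi_{k,a}$-action you define on $\MM_{\widehat{B}}$ matches the image of the $\xi_{k,h}$-action on $\MM$ for any lift $h$ of $a$. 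The paper does (b) explicitly through Lemmas~\ref{spec1} and \ref{loc}; your route (a) is fine but needs you to say out loud that every $(n+1)$-alternating multilinear element of $A*C\{x,y,t\}$, even one with $A$-coefficients, lies in $\mathcal{CAP}_{n+1}(A*C\{x,y,t\})$ (this is what makes Proposition~\ref{zubrilin4} go through in $\widehat{B}$, not merely in $\widehat{C\{x,y,t\}}$). With that observation supplied, your proof is complete.

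One small remark on presentation: you do not actually need a single surjection $\MM_{\widehat{B}}\twoheadrightarrow\MM_A$. Since each element of $\MM_A$ is the image of some doubly alternating element of $\widehat{B}$ under some substitution $\varphi:\widehat{B}\to A$ (which exists because $A$ satisfies $\Capl_{n+1}$), and each such $\varphi$ is $A$-linear, the annihilation in $\widehat{B}$ is pushed to $A$ one element at a time, which avoids any cardinality bookkeeping of the kind the paper handles in Lemma~\ref{loc}.
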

\begin{proof} If $a  \in \Obst_n(A)$, then $a\MMO  \in \mathcal I$, in view of Lemmas~\ref{extr1} and
\ref{module.10} and Proposition~\ref{sof.1}, so is 0 modulo
$\mathcal I$.
\end{proof}

\begin{cor} If $b\in A$ belongs to the $T$-ideal generated by
doubly alternating polynomials, then $  \Obst_n(A)b =0$.
\end{cor}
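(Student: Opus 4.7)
The plan is to deduce the Corollary directly from the preceding Proposition by verifying that any such $b$ itself lies in $\MM_A$; then $\Obst_n(A) b \subseteq \Obst_n(A)\MM_A = 0$ immediately.

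First I would unwind the definition. A general element $b$ of the (two-sided) ideal of $A$ generated by evaluations of doubly alternating polynomials can be written as a finite sum of terms of the form
$$u\, f(\vec a,\vec b,\vec c\,)\, v,$$
where $f(\vec x,\vec y,\vec t\,)\in \MMO$ is doubly alternating in $x_1,\dots,x_n$ and in $y_1,\dots,y_n$, and $u,v,\vec a,\vec b,\vec c \in A$.

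The key observation is that the outside multipliers $u$ and $v$ can be absorbed into the polynomial by introducing two fresh auxiliary indeterminates $t_0$ and $t_{n+1}$ and setting
$$F(\vec x,\vec y,\vec t,t_0,t_{n+1})\; :=\; t_0\, f(\vec x,\vec y,\vec t\,)\, t_{n+1}.$$
Since neither $t_0$ nor $t_{n+1}$ appears among the alternating indeterminates $x_i,y_j$, the polynomial $F$ is still doubly alternating in $\vec x$ and in $\vec y$, so $F\in \MMO$. Evaluating $F$ at $(u,v,\vec a,\vec b,\vec c\,)$ recovers the summand $u\, f(\vec a,\vec b,\vec c\,)\, v$; hence each such summand, and therefore $b$, lies in $\MM_A$.

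Applying the preceding Proposition yields $\Obst_n(A)\, b \subseteq \Obst_n(A)\,\MM_A = 0$, which is the desired conclusion. The only genuinely substantive step (as opposed to routine bookkeeping) is ensuring that the $T$-ideal of $C\{x,y,t\}$ generated by doubly alternating polynomials evaluates at $A$ to precisely the two-sided ideal generated by their evaluations on $A$; this is exactly what the absorbing trick above accomplishes, since left- and right-multiplication by elements of $A$ is captured by specializing extra $t$-variables, while the alternating character of $f$ in the $x$'s and $y$'s is preserved by prepending and appending such variables.
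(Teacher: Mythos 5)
Your proof is correct and takes essentially the same approach as the paper's one-line argument, which simply asserts that $b$ is a linear combination of images of $\MMO$ under specializations to $A$. Your ``absorption trick'' (prepending and appending fresh $t$-variables to capture left/right multiplication by $A$-elements) is precisely the observation that justifies that terse assertion, since $\MMO$ already contains \emph{all} doubly alternating polynomials in $x_1,\dots,x_n$ and $y_1,\dots,y_n$, including those of the form $t_0 f t_{n+1}$.
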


\begin{proof} The element $b$ belongs to the linear combinations images of $\MM_A$
under specializations $x_i\mapsto a_i$.
\end{proof}

By Step 7 of Section~\ref{structure}, this will complete the proof
of the nilpotence of $\operatorname{Jac}(A)$ when $C$ is a field, or
more generally of any nil ideal when $C$ is Noetherian, once  we
complete the proof of Proposition~\ref{len.2}.

\subsection{A more formal approach to Zubrilin's argument}

Rather than push immediately into $A$,  one can perform   these
computations first at the level of polynomials and then specialize.
This requires a bit more machinery, since it requires adjoining the
commuting indeterminates $\xi_{n,A}$ to the free product, but might
be clearer conceptually.

Note that $\widehat {C\{  t  \}}[\xi_{n, {C\{t  \}}}]=R \otimes _C
\widehat {C\{  t  \}}.$

\begin{lem}\label{module.11} % With $R$ given in~\eqref{april1},
 $\MM$ becomes an $\widehat {C\{t  \}}[\xi_{n, C\{ t
\}}]$-module  via the action given as follows:

 Order the $\xi_{k,h}$ as $\xi_ j = \xi_{k_j,h_j}$
for $1 \le j < \infty.$

For a letter $\xi_j = \xi_{k_j,h_j}$, define $$\xi_j \hat f = \hat
\delta^{(n)}_{k_j,h_j}(\hat f),
$$
and, inductively, $$\xi_j ^d\hat f = \hat
\delta^{(n)}_{k_j,h_j}(\xi_j ^{d-1}\hat f).
$$

For a monomial $h = \xi_j^{d_j}\dots \xi_1^{d_1}$ of degree $d =
d_1 + \cdots + d_j,$   define
$$h f = \xi_j^{d_j}(\xi_{j-1}^{d_{j-1}}\dots \xi_1^{d_1} \hat f)
$$
inductively on $j$.

Finally, define
$$\sum (c _i h_i)\hat f = \sum \a _i (h_i \hat f) $$
where $c_i \in C$ and $h_i$ are distinct monomials.

\end{lem}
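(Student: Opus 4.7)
The plan is to assemble the claimed module structure out of two inputs already proved upstream: Lemma~\ref{module.10}, which provides that each individual operator $\hat\delta^{(n)}_{k,h}$ is a well-defined $C$-linear endomorphism of $\MM$, and Lemma~\ref{len.01}, which says that any two such endomorphisms commute on $\MM$. Once those are in hand, the lemma is essentially a formality: one must check that the prescription in the statement respects the defining relations of the polynomial ring $C[\xi_{n,C\{t\}}]$, and then package everything compatibly with the left $\widehat{C\{t\}}$-action on $\MM$ furnished by Remark~\ref{finish.3}.

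First I would check that each single generator $\xi_j = \xi_{k_j,h_j}$ acts as a well-defined $C$-linear endomorphism of $\MM$; this is exactly Lemma~\ref{module.10}(i). The inductive recipe $\xi_j^d\hat f = \hat\delta^{(n)}_{k_j,h_j}(\xi_j^{d-1}\hat f)$ therefore stays inside $\MM$ at every stage. The main point is to show that the prescription for a monomial $h = \xi_{j_s}^{d_s}\cdots \xi_{j_1}^{d_1}$ does not depend on the chosen ordering of the generators $\xi_{j_i}$; this is the only nontrivial obstacle, since the polynomial ring identifies $\xi_j\xi_k$ with $\xi_k\xi_j$. It is resolved by Lemma~\ref{len.01}: a standard argument, writing an arbitrary permutation as a product of adjacent transpositions, reduces the claim to the pairwise commutativity of the operators $\hat\delta^{(n)}_{k_i,h_i}$ on $\MM$, which that lemma supplies. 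Consequently $h\hat f$ depends only on $h$ as an element of $C[\xi_{n,C\{t\}}]$. Extending $C$-linearly by the recipe $\sum c_i h_i \mapsto \sum c_i (h_i\hat f)$ gives a well-defined action of the commutative polynomial ring $C[\xi_{n,C\{t\}}]$ on $\MM$, and the module axioms over this subring (associativity and distributivity) follow directly from the inductive definition together with commutativity.

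To upgrade to a module over the full ring $\widehat{C\{t\}}[\xi_{n,C\{t\}}] \cong C[\xi_{n,C\{t\}}]\otimes_C \widehat{C\{t\}}$, I would combine the $\xi$-action just constructed with the left multiplication action of $\widehat{C\{t\}}$ on $\MM$ from Remark~\ref{finish.3}. The one remaining verification is compatibility: for $h' \in \widehat{C\{t\}}$ and $\xi = \xi_{k,h}$,
$$
h'\cdot(\xi\hat f) \;=\; \xi\cdot(h'\hat f).
$$
This is immediate from the construction of $\delta^{(x,n)}_{k,h}$: the substitution $x_i\mapsto (z+1)x_i$ followed by specialization $z\mapsto h$ only touches the $x$-letters (see Remark~\ref{finish.1}.1), so the leading factor $h'\in \widehat{C\{t\}}$ passes through unchanged. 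Hence the two actions commute, and they assemble into the desired $\widehat{C\{t\}}[\xi_{n,C\{t\}}]$-module structure on $\MM$. The entire weight of the argument rests on Lemma~\ref{len.01}, and hence ultimately on Proposition~\ref{len.2}; the rest is bookkeeping.
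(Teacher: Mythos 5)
Your proof is correct and follows the same line as the paper's: well-definedness of the $\xi$-action via Lemma~\ref{module.10}, and the module relations via the pairwise commutativity supplied by Lemma~\ref{len.01}, reducing $(h_1h_2)\hat f = h_1(h_2\hat f)$ to the transposition case. You additionally spell out the compatibility with the left $\widehat{C\{t\}}$-action, which the paper's proof leaves implicit, and that check is correct since $\delta^{(x,n)}_{k,h}$ only substitutes into the $x$-variables and hence commutes with left multiplication by elements of $\widehat{C\{t\}}$.
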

\begin{proof}

 The action  is clearly well-defined, so we need to verify the
associativity and commutativity of the action. It is enough to show
that $(h_i h_{i'}) \hat f = h_i (h_{i'})\hat f$ for any two
monomials $h_i$ and $h_i'$. But this follows inductively from
induction on their length, plus the fact that $\xi_j (\xi_{j'}\hat
f) = \xi_{j'} (\xi_{j}\hat f)$ for any $\xi_j$ and $ \xi_{j'}.$
\end{proof}

%
%\begin{prop}\label{module.12}
% $\MM$ becomes an $\widehat {C\{t  \}}[\xi_{n, C\{ t
%\}}]$-module  via the action action given as follows:
%$$
%\left(\widehat {g}\xi_{\vec k,\vec h}\right)\hat f = \widehat {g}
%\hat\delta^{(n)}_{\vec k,\vec h }(\hat f)  =
%\hat\delta^{(n)}_{\vec k,\vec h}(\widehat {g  f})   , \qquad g\in
%C\{t\}
%$$
%extended by linearity to sums.
%\end{prop}
%\begin{proof}
%The first (left) equality is by definition. The second equality
%follows since for $g\in C\{t\}$,
%$$
% {g}
%\delta^{(x,n)}_{ k, h }( f)  =  \delta^{(x,n)}_{ k, h}( {g  f}),
%\quad \mbox{and hence}\quad {\hat g} \hat \delta^{(n)}_{\vec
%k,\vec h }( \hat f)  =  \hat\delta^{(n)}_{\vec k,\vec h}(
%\widehat{g f}).
%$$
%\end{proof}

Let us continue to take $\mathcal I = \mathcal{CAP}_{n+1}.$

\begin{rem} Clearly $A[\xi_{n,A}]  *_C C\{
 t \}\subset A[\xi_{n,A}]  *_C C\{ x;y;t\}$ in the natural way,
 and then
 $$\mathcal I (A[\xi_{n,A}]  *_C C\{  t\}) = (A[\xi_{n,A}]  *_C C\{t\})
 \cap \mathcal I (A[\xi_{n,A}]  *_C C\{
 x;y;t\})$$
since we are just restricting the indeterminates $\vec x,\vec
y,\vec t$ to the indeterminates $\vec t$.
\end{rem}

It follows from Noether's Isomorphism Theorem that $$ \mathcal F :
=  (A[\xi_{n,A}]  *_C C\{ t\})/{\mathcal I (A[\xi_{n,A}]  * _C
C\{t\})},$$ can be viewed naturally in $(A[\xi_{n,A}]  *_C C\{
x;y;t\})/{\mathcal I (A[\xi_{n,A}]  *_C C\{ x;y;t\})}$.

  Viewing $\MMO \subset C\{ x;y;t\}\subset A[\xi_{n,A}]  *_C C\{
x;y;t\}$, we define
\begin{equation} \tMM' = (A[\xi_{n,A}] *_C C\{ t\}) \MMO \subset A[\xi_{n,A}]  *_C C\{
x;y;t\},\end{equation} and its image    in $(A[\xi_{n,A}]
* _C C\{ x;y;t\})/ \mathcal I (A[\xi_{n,A}]  * _C C\{
 x;y;t\}),$ which we call $ \tMM $ (intuitively consisting of terms ending with images of doubly alternating polynomials), which
 acts naturally by
right multiplication on $\mathcal F $. To understand how $ \tMM $
works, we look at the Capelli polynomial acting on $A  *_C C\{
x;y;t\}$ for an arbitrary algebra $A$ satisfying $\Capl _{n+1}$.

%
% Since $\MM$
%was defined modulo $\mathcal I = CAP_{n+1},$ it
%
%, and we can define
%$$ \tMM = \mathcal F \MM \subseteq \mathcal
%F .$$

There is a more subtle action that we need. $\tMM$ can be viewed
as an $R$-module where $R = C[\xi_{n,\widehat{C\{ x;y;t\}}}] $,
via the crucial Lemma~\ref{module.10}. But as above, $ \MMO$ is an
$A  * C \{ t\}$-module where the algebra multiplication is induced
from \eqref{mul} (viewing $ \MMO \subset C\{ x;y;t\}$), implying $
\MM$ is an $A
*  C \{ t\}$-module annihilated by
$CAP_{n+1}, $
%
%
%
%and as such . On the other hand, since $A$ already satisfies
%$CAP_{n+1}$ by hypothesis, this action restricts to left
%multiplication by $A * C \{ x;y;t\}$ on $A[\xi_{n,A}]
%* C \{ x;y;t\}$,
 and $\tMM $ thereby becomes an $A[\xi_{n,\widehat{C\{ x;y;t\}}}]  * \widehat{C \{
t\}}$-module, where we define $$\xi_{k,h} \hat f =
\hat\delta^{(n)}_{k,h} \hat f$$ for $h \in \widehat{C\{x;y;t\}}$ and
$\hat f \in \MM $, by means of the action given
in~Lemma~\ref{module.11}, also cf.~Remark~\ref{obst1}. Our main task
is to identify these two actions when they are specialized to~$A$.

\subsubsection{The specialization argument}\label{S2.6} $ $

Having in hand the module $\tMM $ on which $A[\xi_{n,\widehat{C\{
x;y;t\}}}]$ acts,   we can specialize the assertion of
Proposition~\ref{sof.1} down to $A$ once we succeed in matching the
actions of $A[\xi_{n,\widehat{C\{ x;y;t\}}}]$ and $A[\xi_{n,A}] $
when specializing to $A$.

\begin{rem} $\mathcal{CAP}_k (A[\xi_{n,A}]) =  \mathcal{CAP}_k (A)[\xi_{n,A}]$, since $\Capl_k$ is
multilinear.
\end{rem}

We write ${\mathcal{DCAP}_n}$ for the $ C\{ t\} $-submodule of $ C\{
x;y;t\} $ generated by $\Dcap_n,$ cf.~\eqref{put1}, and
$\widehat{\mathcal{DCAP}_n}$ for its image in $ \widehat{C\{
x;y;t\}} $. This is a set of doubly alternating polynomials in $x_1,
\dots, x_n$ and $y_1, \dots, y_n$, with variables $t_i$ interspersed
arbitrarily.

\begin{lem}\label{spec1}
Any  specialization $\varphi: C\{ x;y;t\}  \to A$ (together with
its accompanying specialization $ \widehat{\varphi}: \widehat{C\{
x;y;t\}}  \to A$) gives rise naturally  to a map
$$\Phi : A[\xi_{n,A}]\widehat{\mathcal{DCAP}_n} \to  \tMM$$ given by
$$\sum _i a_i \xi_{k,\varphi (h_{j_i})} \widehat{ f_i} \mapsto \sum
_i a_i \widehat{\varphi}( \widehat{\xi_{k,h_{j_i}}}  f_i)=  \sum _i
a_i \widehat{\varphi}( \widehat{\delta_{k,h_{j_i}}^{(x,n)} f_i})$$
where $ \widehat{ f_i} \in \widehat{\mathcal{DCAP}_n}.$
\end{lem}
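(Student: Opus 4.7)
The idea is to build $\Phi$ by specializing, one commuting variable at a time, the action of the $\hat\delta$-operators on $\MM$ that was already constructed in Lemma \ref{module.11}. Concretely, a typical element of the domain is a finite $A$-linear combination of monomials $a\,\xi_{k_1,a_1}\xi_{k_2,a_2}\cdots\xi_{k_r,a_r}\,\widehat f$ with $a,a_j\in A$ and $\widehat f\in\widehat{\mathcal{DCAP}_n}$. For each $a_j$ I would choose a lift $h_j\in C\{t\}$ under $\varphi$ (i.e.\ $\varphi(h_j)=a_j$) and declare
\[
\Phi\bigl(a\,\xi_{k_1,a_1}\cdots\xi_{k_r,a_r}\,\widehat f\bigr) \;=\; a\,\widehat\varphi\bigl(\hat\delta_{k_1,h_1}^{(x,n)}\cdots\hat\delta_{k_r,h_r}^{(x,n)}(\widehat f)\bigr)\;\in\;\tMM.
\]
By Lemma \ref{module.10} each $\hat\delta_{k_j,h_j}^{(x,n)}$ maps $\MM$ to itself, so the inner expression lives in $\MM$, and its image under $\widehat\varphi$ lands in $\tMM$ by construction.

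The real content of the proof is to verify that this prescription is insensitive to three sources of ambiguity. \emph{(i)~Choice of lifts $h_j$.} If $h_j,h_j'\in C\{t\}$ both satisfy $\varphi(h_j)=\varphi(h_j')=a_j$, then Lemma \ref{extr1} (the functoriality of the $\hat\delta$-operator under specialization) ensures that $\widehat\varphi(\hat\delta_{k_j,h_j}^{(x,n)}(g))=\widehat\varphi(\hat\delta_{k_j,h_j'}^{(x,n)}(g))$ for every $g\in\MM$; applied iteratively, the composition of operators followed by $\widehat\varphi$ depends only on the $a_j$'s, not on the chosen lifts. \emph{(ii)~Order of the $\xi$'s.} The variables $\xi_{k,a}$ and $\xi_{\ell,b}$ commute in the polynomial ring $A[\xi_{n,A}]$, and well-definedness forces $\Phi$ to respect this; on the target, this is precisely the content of Lemma \ref{len.01}, which says $\hat\delta_{k,h_1}^{(n)}$ and $\hat\delta_{\ell,h_2}^{(n)}$ commute on $\MM$ modulo $\mathcal{CAP}_{n+1}$. \emph{(iii)~Additivity and $A$-linearity.} These are immediate because $\widehat\varphi$ is a $C$-algebra homomorphism fixing $A$, so I can extend $\Phi$ $A$-linearly from its values on monomials. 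Combining (i)--(iii), the assignment descends to a well-defined $C$-linear map $\Phi:A[\xi_{n,A}]\widehat{\mathcal{DCAP}_n}\to\tMM$.

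\textbf{Main obstacle.} The crucial step is (ii): since the domain involves the \emph{commutative} polynomial ring $A[\xi_{n,A}]$, the map $\Phi$ can exist only if, after specialization, the various operators $\hat\delta_{k,h}^{(x,n)}$ genuinely commute at the level of $\MM$ modulo $\mathcal{CAP}_{n+1}$. This is exactly why the whole apparatus of doubly alternating polynomials, together with Proposition \ref{len.2} and Lemma \ref{len.01}, was set up beforehand. Once commutativity is in hand, the other two points (functoriality in the lifts, and $A$-linearity) reduce to routine bookkeeping, and the lemma follows.
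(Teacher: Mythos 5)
Your proposal is correct and uses the same central tool the paper does, namely the functoriality Lemma~\ref{extr1}, to handle the choice of lifts $h_j\in C\{t\}$ over $a_j=\varphi(h_j)\in A$. The paper's own proof stops there: it only records the verification that $\varphi(h_{j_i})=\varphi(h_{j_i}')$ forces $\widehat\varphi(\widehat{\delta^{(x,n)}_{k,h_{j_i}}f_i})=\widehat\varphi(\widehat{\delta^{(x,n)}_{k,h_{j_i}'}f_i})$, essentially treating the displayed formula (which shows a single $\xi$-factor per term) at face value. What you add is the explicit observation that, since $A[\xi_{n,A}]$ is a \emph{commutative} polynomial ring, well-definedness on higher-degree $\xi$-monomials also requires the $\hat\delta^{(n)}_{k,h}$-operators to commute on $\MM$ modulo $\mathcal{CAP}_{n+1}$, which is Lemma~\ref{len.01} (this is, in fact, the point that \cite[Remark~2.50]{BR} is said by the authors to have glossed over). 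So your argument is a more careful and complete version of the same route: both proofs hinge on Lemma~\ref{extr1}; yours additionally invokes Lemma~\ref{len.01} to cover the commutativity of the $\xi$-action, and packages the whole thing by defining $\Phi$ on a monomial basis and extending by $A$-linearity. One small caveat worth tightening: as stated, $\widehat\varphi$ lands in $A$, so the final image should be understood as living in $\tMM$ via the natural embedding of $A$-multiples of $\widehat{\mathcal{DCAP}_n}$-elements into the relatively free product, rather than by a direct application of $\widehat\varphi$; the paper shares this informality.
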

\begin{proof} We need to show that this is well-defined, which follows from the functoriality
property given in Lemma~\ref{extr1}. Namely, if $\varphi(h_{j_i})=
\varphi(h_{j_i}') ,$ then $ \widehat{\varphi}( \widehat{h_{j_i}})=
 \widehat{\varphi} \widehat{(h_{j_i}'}) $ and $$ \sum _i a_i
\varphi(\widehat{\delta_{k,h_{j_i}}^{(x,n)}} \widehat{ f_i}) =  \sum
_i a_i \varphi(\delta_{k,h_{j_i}}^{(x,n)}  f_i)= \sum _i a_i
 \varphi(\delta_{k,h_{j_i}'}^{(x,n)}  f_i)= \sum _i a_i
\varphi(\widehat{\delta_{k,h_{j_i}'}^{(x,n)}  f_i}).$$
\end{proof}

The objective of this lemma was to enable us to replace $A[\xi_{n,A}]$ by $A$ in our considerations.
%\begin{prop}\label{spec11}
%The map $\Phi : A[\xi_{n,A}]\widehat{\mathcal{DCAP}_n} \to \tMM$
%of Lemma~\ref{spec1} sends $I_{n, \widehat {C\{ x;y;t\}}}$
%into~$I_{n, A},$ and  gives rise naturally  to a  map
%$$(\tilde \Phi : A[\xi_{n,A}]\widehat{\mathcal{DCAP}_n} + \mathcal I (A[\xi_{n,A}]  * _C C\{
% x;y;t\}) )/\mathcal I (A[\xi_{n,A}]  * _C C\{
% x;y;t\})\to \tMM.$$
%
%\end{prop}
 $\ker \Phi$ contains all  $\widehat{\delta_{k,h}^{( n)}
f} - \xi_{k,h} \hat f$  (cf.~Remark~\ref{welld})  as well as $ (\hat
h^n -\sum _{k=0}^{n-1} \hat h^k \xi_{k,h})\widehat{f}$, where $h$
ranges over all words and $\hat f\in \widehat{\mathcal{DCAP}_n}$, so
we see that the Zubrilin integrality relations are passed on.

\begin{lem}\label{loc} If  $t$ is an infinite set of
noncommuting indeterminates whose cardinality $\aleph$ is at least
that of~$A$, then for any given evaluation $w$ in
$\mathcal{DCAP}_{n}(A *_C C\{ x;y;t\}),$ there is a map
$$\varphi_w: C\{ x;y;t\} \to A *_C C\{ x;y;t\},$$ sending
$\mathcal{DCAP}_{n}$ to $\mathcal{DCAP}_{n}(A *_C C\{ x;y;t\}),$
such that   $w$ is in the image of $\varphi_w$.
\end{lem}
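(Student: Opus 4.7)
The plan is to build $\varphi_w$ by ``lifting'' $w$ to a polynomial in $C\{x;y;t\}$: the elements of $A$ that occur in the explicit expression of $w$ will be temporarily encoded by fresh $t$-indeterminates, and $\varphi_w$ will then be the $C$-algebra homomorphism that sends those indeterminates back to the corresponding elements of $A$, while fixing all other indeterminates via the canonical inclusion $C\{x;y;t\} \hookrightarrow A *_C C\{x;y;t\}$.

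The first step is to unpack $w$ as an evaluation of $\Dcap_n$ in $A *_C C\{x;y;t\}$, writing
$$
w = \bar u_1\,\Capl_n(\bar v_1,\dots,\bar v_n;\dots)\,\bar u_2\,\Capl_n(\bar v_1',\dots,\bar v_n';\dots)\,\bar u_3
$$
with entries $\bar u_i, \bar v_j, \bar v_j' \in A *_C C\{x;y;t\}$. By the explicit description of the free product in Section~\ref{rep.23}, each such entry is a finite $C$-linear combination of alternating products $a_0 h_1 a_1 \cdots h_m a_m$ of elements of $A$ and words $h_j$ in the indeterminates. Hence $w$ involves only finitely many elements $a_1,\dots,a_s \in A$ and finitely many indeterminates from $\{x,y,t\}$.

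The second step is to invoke the cardinality hypothesis to choose fresh $t$-indeterminates $\tau_1,\dots,\tau_s$ that do not occur in the expression of $w$ (only finitely many are required for a single $w$, but $|t|\ge |A|$ guarantees such a choice is possible uniformly in $a\in A$). Define $\varphi_w : C\{x;y;t\} \to A *_C C\{x;y;t\}$ as the $C$-algebra homomorphism with $\varphi_w(\tau_i) = a_i$ for $1 \le i \le s$ and $\varphi_w(\rho) = \rho$ for every other indeterminate $\rho$. Replacing each occurrence of $a_i$ in the explicit expression of $w$ by the symbol $\tau_i$ yields a polynomial $f \in C\{x;y;t\}$ whose shape is still that of $\Dcap_n$, so $f \in \mathcal{DCAP}_n$, and by construction $\varphi_w(f) = w$, placing $w$ in the image of $\varphi_w$.

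The module condition $\varphi_w(\mathcal{DCAP}_n) \subseteq \mathcal{DCAP}_n(A *_C C\{x;y;t\})$ is then automatic, since $\mathcal{DCAP}_n$ is the $C\{t\}$-submodule generated by $\Dcap_n$ and $\varphi_w$ is an algebra homomorphism that carries any such generator to a specialization of $\Dcap_n$ inside $A *_C C\{x;y;t\}$. I expect the main obstacle to be the bookkeeping in the middle step: verifying that the symbolic substitution $a_i \leftrightarrow \tau_i$ inside an element of the free product is well-defined, i.e.\ independent of how that element is presented as a $C$-linear combination via \eqref{mul4}--\eqref{mul5}. The freshness of the $\tau_i$, which is exactly what the cardinality hypothesis on $t$ guarantees, is what rescues this point and prevents accidental collisions with indeterminates already present in $w$.
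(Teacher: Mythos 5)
Your construction breaks down at the step ``Replacing each occurrence of $a_i$ in the explicit expression of $w$ by the symbol $\tau_i$ yields a polynomial $f \in C\{x;y;t\}$ whose shape is still that of $\Dcap_n$, so $f \in \mathcal{DCAP}_n$.'' The fillers $\bar u_i$, $\bar v_j$, $\bar v_j'$, and the entries occupying the $\vec t$, $\vec{t'}$ slots of the two Capelli factors are elements of the free product $A *_C C\{x;y;t\}$, and may therefore themselves contain occurrences of the distinguished indeterminates $x_1,\dots,x_n$ and $y_1,\dots,y_n$. (For instance $\bar u_1$ could be $a_1 x_1 a_2$, so that after your replacement it becomes $\tau_1 x_1 \tau_2$.) Encoding only the $A$-coefficients leaves those interior $x$'s and $y$'s in place, so they land in $f$ in what must be $t$-slots. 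Such an $f$ generally fails to be multilinear in the $x_j, y_j$, let alone doubly alternating, and in particular it does not lie in $\mathcal{DCAP}_n$: by the paper's description right after introducing $\widehat{\mathcal{DCAP}_n}$, elements of $\mathcal{DCAP}_n$ are doubly alternating with ``variables $t_i$ interspersed arbitrarily,'' i.e.\ only $t$-words occupy the non-alternating positions. So the lift you produce need not belong to $\mathcal{DCAP}_n$, and the conclusion $\varphi_w(f)=w$ with $f\in\mathcal{DCAP}_n$ is not established.

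The paper's proof works at a coarser granularity precisely to avoid this. It reserves a designated indeterminate $t_g\in t$ for \emph{every} element $g$ of $A *_C C\{x;y;t\}$; this is where the hypothesis $|t|\ge|A|$ (hence $|t|=|A *_C C\{x;y;t\}|$) is used in full force, since one must set aside that many indeterminates and still have as many left over to map onto the original $t$'s. Then $\varphi_w$ fixes each $x_i, y_j$, sends each reserved $t_g\mapsto g$, and the lift
$t_g\,\Capl_n(x_1,\ldots,x_n;\,t_{g_1},\ldots,t_{g_n})\,t_{g'}\,\Capl_n(y_1,\ldots,y_n;\,t_{h_1},\ldots,t_{h_n})\,t_{g''}$
is an honest element of $\mathcal{DCAP}_n$ carried by $\varphi_w$ to $w$. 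Your final paragraph, that $\varphi_w(\mathcal{DCAP}_n)\subseteq\mathcal{DCAP}_n(A *_C C\{x;y;t\})$ for a homomorphism fixing $x,y$ and sending $t$'s into the free product, is correct; what needs repair is only the construction of the preimage, by substituting for whole free-product elements rather than for individual coefficients from $A$.
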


\begin{proof}  Note that $A  *_C C\{ x;y;t\}$ has cardinality  $\aleph$. Setting aside indeterminates $$\{t_g : g \in A  *_C C\{ x;y;t\}
\},$$ we still have $\aleph$ indeterminates left over, to map onto
our original set $t$ of  $\aleph$ indeterminates. But any evaluation
$w$ of $\mathcal{DCAP}_{n}$ on $A *_C C\{ x;y;t\} $ can be written
as \begin{equation}\label{spa} w=  g \Capl _n(x_1,\ldots, x_{n};
g_1, \dots, g_{n})g'   \Capl_n(y_1,\ldots ,y_n; h_1, \dots,
h_{n})g'' ,\end{equation} for suitable $g,g',g'',g_i,h_j \in A
*_C C\{ x;y;t\} $. Defining $\varphi_w$ by sending $x_i \mapsto
x_i $, $y_j \mapsto y_j$, and sending  the appropriate $t_{g}
\mapsto g,$ $t_{g'} \mapsto g',$  $t_{g''} \mapsto g'',$ $t_{g_i}
\mapsto g_i,$ and $t_{h_j} \mapsto h_j,$  we have an element in
$\varphi_w^{-1}(w)$.
\end{proof}

Clearly $\varphi_w(\mathcal{CAP}_{n+1}) \subseteq
\mathcal{CAP}_{n+1}(A *_C C\{ x;y;t\}),$ so,  when $\Capl_{n+1}\in
\mathcal I,$ $\varphi_w$ induces a map $$\hat \varphi_w:
\widehat{C\{ x;y;t\}} \to (A *_C C\{ x;y;t\})_{\mathcal I },$$which
sends $\MM \to \tMM.$

Although we do not see that $\mathcal{CAP}_{n+1}$ need be mapped
onto $\mathcal{CAP}_{n+1}(A *_C C\{ x;y;t\}),$ Lemma~\ref{loc} says
that it is ``pointwise'' onto, according to any chosen point, and
this is enough for our purposes.

\begin{thm}\label{last.3}
$\Obst_n(A)\cdot (\mathcal{CAP}_n(A))^2=0$, for any  PI-algebra $A
= C\{ a_1, \dots, a_\ell \}$ satisfying the Capelli identity
$\Capl _{n+1}$.
\end{thm}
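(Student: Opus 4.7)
The plan is to combine two ingredients already in place: the Corollary just above, which asserts that $\Obst_n(A)$ annihilates every element of $A$ lying in the $T$-ideal generated by doubly alternating polynomials, together with the elementary observation that $(\mathcal{CAP}_n(A))^2$ sits inside precisely that ideal. Once both are in hand, the theorem is immediate.

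First I would verify the reduction. A typical generator of $(\mathcal{CAP}_n(A))^2$ has the form $xy$ with $x,y\in \mathcal{CAP}_n(A)$; each factor reads $u\,\Capl_n(a_1,\dots,a_n;c_1,\dots,c_n)\,v$ for suitable $u,v,a_i,c_i\in A$. Multiplying the two factors together and re-interpreting the intervening $A$-coefficients as values of the indeterminates $t_1,t_2,t_3,\vec t,\vec{t'}$ appearing in $\Dcap_n$ from~\eqref{put1} exhibits $xy$ as a specialization to $A$ of the double Capelli polynomial. Since $\Dcap_n$ is doubly alternating in $x_1,\dots,x_n$ and $y_1,\dots,y_n$, every $b\in(\mathcal{CAP}_n(A))^2$ lies in the image of evaluations on $A$ of $\MMO$, hence in the $T$-ideal of $A$ generated by doubly alternating polynomials. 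The Corollary above then yields $\Obst_n(A)\cdot b = 0$, which is the theorem.

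The genuine difficulty has already been absorbed into establishing $\Obst_n(A)\cdot\MM_A = 0$, and the hard part is bridging the abstract Zubrilin action of the commuting indeterminates $\xi_{k,h}$ on $\MM$ (given by $\xi_{k,h}\hat f = \hat\delta^{(n)}_{k,h}(\hat f)$) with the concrete action after specialization to $A$. The plan for handling that is to work inside the relatively free product $A *_C C\{x;y;t\}$ modulo the $T$-ideal generated by $\CAP_{n+1}$, invoke the well-definedness of $\hat\delta^{(n)}_{k,h}$ on $\MM$ via Lemma~\ref{module.10} together with the functoriality in Lemma~\ref{extr1}, and then use the specialization Lemmas~\ref{spec1} and~\ref{loc} to push Proposition~\ref{sof.1}, which says $I_{n,\widehat{C\{t\}}}\cdot \MM = 0$, down to the required statement $\Obst_n(A)\cdot\MM_A = 0$. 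Once that bridge is crossed, the Corollary delivers the theorem without further effort.
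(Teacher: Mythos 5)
Your proposal is correct and takes essentially the same route as the paper, which in \S\ref{last.1} explicitly presents two proofs: the ``faster but more ad hoc'' version (the Proposition $\Obst_n(A)\MM_A=0$ plus the Corollary, which you lean on) and the ``more structural'' version carried out in \S\ref{S2.6} where Theorem~\ref{last.3} is actually stated and proved via $I_{n,A}\cdot\mathcal{DCAP}_n(A)=0$. You correctly observe that every product of two terms from $\mathcal{CAP}_n(A)$ is a specialization of the doubly alternating polynomial $\Dcap_n$, and you correctly locate the genuine content in passing from $I_{n,\widehat{C\{t\}}}\cdot\MM=0$ (Proposition~\ref{sof.1}) to the concrete annihilation statement over $A$ using the functoriality and specialization machinery (Lemmas~\ref{extr1}, \ref{module.10}, \ref{spec1}, \ref{loc}).
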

\begin{proof}

We form the free algebra ${C\{ x;y;t\}}$  by taking a separate
indeterminate $t_j$ for each element of~$A[\xi_{n,A}]
\widehat{\mathcal{DCAP}_n}$. We work with $A[\xi_{n,A}]
\widehat{\mathcal{DCAP}_n},$ viewed in the relatively  free product
 $\tilde A : = (A[\xi_{n,A}]  *_C C\{ x;y;t\})_{\mathcal
I}$, where $\mathcal I = \mathcal{CAP}_{n+1}(A[\xi_{n,A}]  *_C C\{
x;y;t\}).$  In view of Lemma~\ref{spec1}, the relation
$$I_{n,\widehat {C\{ x;y;t\}}}\cdot \MM\equiv 0~ \pmod
{\mathcal{CAP}_{n+1}(A[\xi_{n,A}]  *_C C\{ x;y;t\})}$$ ~restricts
to the relation $I_{n,\widehat {C\{ x;y;t\}}
}\widehat{\mathcal{DCAP}_n} \equiv 0 \pmod
{\mathcal{CAP}_{n+1}(A[\xi_{n,A}]  *_C C\{ x;y;t\})}.$ %which
%by~Proposition~\ref{spec11} passes to the relation $I_{n,\tilde A
%}\widehat{\mathcal{DCAP}_n} \subseteq \mathcal{CAP}_{n+1}(\tMM).$
But the various specializations of  Lemma~\ref{loc} cover all of
$\mathcal{DCAP}_n(A)$.
 Hence  Lemma~\ref{extr1} applied to Proposition~\ref{sof.1}
 and Lemma~\ref{spec1} implies $I_{n,  A
} \mathcal{DCAP}_n(A) = 0$, and thus
$$\Obst_n(A)\cdot (\mathcal{CAP}_n(A))^2 \subseteq I_{n,  A
} \mathcal{DCAP}_n(A) = 0.$$
\end{proof}

%$$I_{n, A} (\mathcal{CAP}_n (A))^2 \equiv 0 ~(\text{modulo
%} \mathcal{CAP}_{n+1}),$$ namely
%$$
%I_{n,A}\cdot (\mathcal{CAP}_n   (A))^2 \subseteq
%\mathcal{CAP}_{n+1}
%$$

\subsection{The proof of Proposition~\ref{len.2}}\label{ster} $ $

\medskip
Now  we present the
 proof of the crucial Proposition~\ref{len.2},
%Lemma~\ref{explicitly} below,
stating that for a doubly
alternating polynomial $f=f(x_1,\dots,x_n,y_1,\dots,y_n,\vec{t})$,
$$\delta_{k,h}^{(x,n)}(f)\equiv\delta_{k,h}^{(y,n)}(f)\quad  modulo ~\mathcal{CAP} _{n+1}.$$
\medskip

\subsubsection{The connection to the group algebra of $S_n$}\label{ster1} $ $

We begin with the basic correspondence between multilinear
identities and elements of the group algebra over $S_n$.

$V_n=V_n(x_1,\ldots , x_n)$ denotes the $C$-module of multilinear
polynomials in $x_1,\ldots , x_n$, i.e.,
$$
V_n=\mbox{span}_C\{x_{\sg(1)} x_{\sg(2)}\cdots x_{\sg(n)} \mid \sg
\in S_n\}.
$$

\begin{defn}\label{identify}
We identify $V_n$ with the group algebra $C[S_n]$, by identifying
a permutation $\sg\in S_n$ with its corresponding monomial (in
$x_1,x_2,\ldots ,x_n)$:
$$
\sg\leftrightarrow M_\sg(x_1,\ldots ,x_n)=x_{\sg(1)}\cdots
x_{\sg(n)}.
$$

Any polynomial $\sum \alpha _\sg x_{\sg(1)} \cdots x_{\sg(n)}$
corresponds to an element $\sum \alpha _\sg \sg \in C[S_n],$ and
conversely, $\sum \alpha _\sg \sg$ corresponds to the polynomial
$$\left(\sum \alpha _\sg \sg\right) x_1 \cdots x_n = \sum \alpha _\sg x_{\sg(1)} \cdots x_{\sg(n)}.$$

\end{defn}

Here is a combinatorial identity of interest of its own.

\medskip
%HHHHHHHHHHHHHHHHHHHHHHHHHHHHHHHHHHHHHHHHHH
Consider two disjoint sets $X\cap Y=\emptyset$, each of
cardinality $n$, and the symmetric group $S_{2n}=S_{X\cup Y}$
acting on $X\cup Y$. For each subset $Z\subseteq X$ we define an
element $P(Z)\in C[S_{2n}]$ as follows:
$$
P(Z)=\sum_{\sigma(Z)\subseteq Y} \sgn(\sigma)\cdot\sigma.
$$
In particular
$$
P(\emptyset)=\sum_{\sigma\in S_{2n}} \sgn(\sigma)\cdot\sigma.
$$
\begin{prop}\label{jan.3}\label{PrZubrilinS2m}
\begin{eqnarray}\label{jan.3.2}
\sum_{Z\subseteq X}(-1)^{|Z|}P(Z)=\sum_{\sg(X)=X}\sgn(\sg)\cdot\sg.
\end{eqnarray}
\end{prop}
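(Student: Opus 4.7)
The plan is to prove the identity by swapping the order of summation in the left-hand side and applying the standard inclusion-exclusion identity $\sum_{Z \subseteq A}(-1)^{|Z|} = [A = \emptyset]$ to the inner sum. The argument is purely combinatorial and requires no clever construction; the only thing to get right is the bookkeeping.

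Concretely, I would first rewrite
$$\sum_{Z \subseteq X} (-1)^{|Z|} P(Z) = \sum_{Z \subseteq X}(-1)^{|Z|} \sum_{\substack{\sigma \in S_{2n} \\ \sigma(Z) \subseteq Y}} \sgn(\sigma)\cdot \sigma = \sum_{\sigma \in S_{2n}} \sgn(\sigma) \left( \sum_{\substack{Z \subseteq X \\ \sigma(Z) \subseteq Y}} (-1)^{|Z|}\right) \sigma,$$
so the coefficient of each $\sigma$ on the left is $\sgn(\sigma)$ times the inner alternating sum.

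Next, for a fixed $\sigma \in S_{2n}$, I would introduce $A_\sigma := \{\,x \in X : \sigma(x) \in Y\,\}$ and observe that the condition $\sigma(Z) \subseteq Y$ on $Z \subseteq X$ is exactly $Z \subseteq A_\sigma$. Thus
$$\sum_{\substack{Z \subseteq X \\ \sigma(Z) \subseteq Y}} (-1)^{|Z|} = \sum_{Z \subseteq A_\sigma} (-1)^{|Z|} = \begin{cases} 1 & \text{if } A_\sigma = \emptyset, \\ 0 & \text{otherwise,}\end{cases}$$
by the standard binomial identity $\sum_{k=0}^{m} \binom{m}{k}(-1)^k = 0$ for $m \geq 1$.

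Finally, $A_\sigma = \emptyset$ means $\sigma(x) \notin Y$ for every $x \in X$; since $X \cup Y$ is the whole ground set, this is equivalent to $\sigma(X) \subseteq X$, and since $\sigma$ is a bijection on a finite set, to $\sigma(X) = X$. So the only $\sigma$ surviving on the left are exactly those stabilizing $X$ setwise, each contributing $\sgn(\sigma)\cdot \sigma$, which is precisely the right-hand side. There is no real obstacle here beyond making sure the swap of summations and the identification of the vanishing condition are written cleanly; the identity is essentially an inclusion-exclusion applied to the indicator of the event ``$\sigma$ sends no element of $X$ into $Y$.''
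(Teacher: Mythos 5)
Your proof is correct and follows essentially the same approach as the paper's: both compare coefficients of each $\sigma \in S_{2n}$ on the two sides, observe that $\sigma(Z)\subseteq Y$ for $Z\subseteq X$ is equivalent to $Z\subseteq A_\sigma$ (the paper calls this set $Z(\sigma)$), and conclude via the alternating-sum identity $\sum_{Z\subseteq A_\sigma}(-1)^{|Z|}=[A_\sigma=\emptyset]$, with $A_\sigma=\emptyset$ iff $\sigma(X)=X$. The only cosmetic difference is that you phrase the coefficient comparison as an explicit swap of summation order, which is a slightly cleaner way to say the same thing.
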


%\newpage

\begin{proof}
Let $\sg\in S_{2n}$ and let $a_\sg $ (resp.~$b_\sg$) be the
coefficient of $\sg$ on the l.h.s. (resp.~r.h.s.)
of~\eqref{jan.3.2}. We show that $a_\sg=b_\sg$. %Clearly, on the
%r.h.s.
%$$
%b_\sg=\sgn(\sg)\ \mbox{if}\ \sg(X)=X\quad\mbox{and}\quad b_\sg=0\
%\mbox{if}\ \sg(X)\ne X.
%$$

Let $Z(\sg)= \sigma^{-1}(Y)$ be the largest subset $Z\subseteq X$
such that $\sg(Z)\subseteq Y$. Note that $\sg(X)=X$
if and only if $Z(\sg)=\emptyset$. Therefore
\begin{eqnarray}\label{compare.1}
b_\sg=\sgn(\sg)\ \mbox{if}\ Z(\sg)=\emptyset\quad\mbox{and}\quad
b_\sg=0\ \mbox{if}\  Z(\sg)\ne\emptyset,
\end{eqnarray}
since $P(\emptyset)=\sum\sgn(\sg)\cdot\sg$. We claim that
$$
a_\sg=\sgn(\sg)\cdot \sum_{Z\subset Z(\sg)}(-1)^{|Z|}.
$$
To show this, recall that
$$
l.h.s=\sum_{Z\subseteq X}(-1)^{|Z|}\sum_{\sigma(Z)\subseteq Y} \sgn(\sigma)\cdot\sigma.
$$
In $P(Z)$ the coefficient of $\sg$ is $\sgn(\sg)$ if $Z\subseteq Z(\sg)$
 (since then $\sigma (Z)\subseteq Y$), and is zero if
$Z\not\subseteq Z(\sg)$ (since if $\sg (Z)\subseteq Y$ then
$\sg(Z\cup Z(\sg))\subseteq Y$, contradicting   the maximality of
$Z(\sg)$). It follows that as claimed,
$$
a_\sg=\sgn(\sg)\cdot\sum_{Z\subseteq Z(\sg)}(-1)^{|Z|}.
$$
It is well known that $\sum_{Z\subseteq Z(\sg)}(-1)^{|Z|}=1$ when $Z(\sg)=\emptyset$ and $=0$ otherwise.
Therefore
\begin{eqnarray}\label{compare.2}
a_\sg=\sgn(\sg)\quad\mbox{if}\quad Z(\sg)=\emptyset\quad\mbox{and}\quad a_\sg=0\quad\mbox{if}\quad Z(\sg)\ne \emptyset.
\end{eqnarray}
The proof now follows by comparing~\eqref{compare.1} with~\eqref{compare.2}.

\end{proof}

%\medskip
%Let $C_{k,\ell}$ denote the $(k,\ell)$ double Capelli polynomials,
%i.e.~the space of all the polynomials
%$g(x_1,\dots,x_k,y_1,\dots,y_\ell,\vec{t}\;)$ which are
%$k$-alternating in $x_1,\dots,x_k$ and  $\ell$-alternating  in
%$y_1,\dots,y_\ell$.

\begin{lem}\label{LeZubr2Grp}

Let $f(x_1,\dots,x_n,y_1,\dots,y_n,\vec{t}\;)$ be doubly
alternating. Then
$$f(x_1,\dots,x_n,y_1,\dots,y_n,\vec{t}\;)\equiv f(y_1,\dots,y_n,x_1,\dots,x_n,\vec{t}\;)\quad\mbox{
modulo} ~~\mathcal{CAP}_{n+1}.
%\left(\sum_{k=0, k\ne n}^{2n}T(C_{k,2n-k})\right).
$$
%Here $\sum_{k=0, k\ne n}^{2n}T(C_{k,2n-k})$ is the $T$-ideal generated
%by the systems of polynomials $C_{k,2n-k}$ where $0\le k\le 2n$ and $k\ne n$.
\end{lem}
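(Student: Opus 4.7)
The plan is to identify the multilinear polynomials of a fixed ``shape'' (specifying which positions are occupied by $x$'s, $y$'s, and each specific $t$-variable) with the group algebra $C[S_{2n}]$ via Definition~\ref{identify}, where $S_{2n}=S_{X\cup Y}$ acts on the $x$-positions $X$ and $y$-positions $Y$. Write $A_X=\sum_{\pi\in S_X}\sgn(\pi)\pi$, $A_Y=\sum_{\rho\in S_Y}\sgn(\rho)\rho$, and let $\tau\in S_{2n}$ be the natural involution sending $X$ to $Y$. Since $f$ is doubly alternating, the contribution of each shape to $f$ has the form $c\cdot A_X A_Y\, m_c$, where $m_c$ is the canonical monomial of the shape (with $x_i$ and $y_i$ in their slots in natural order); thus $f=\sum_{m_c}c_{m_c}\,A_X A_Y\, m_c$. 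A direct calculation on $m_0=x_1\cdots x_n y_1\cdots y_n$ shows that the variable-swap $f(\vec y,\vec x,\vec t\,)$ corresponds to left multiplication by $\tau$. Hence the lemma reduces to showing $(1-\tau)A_X A_Y\,m \in \mathcal{CAP}_{n+1}$ for every monomial $m$.

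The key step is Proposition~\ref{jan.3}: $A_X A_Y=\sum_{Z\subseteq X}(-1)^{|Z|}P(Z)$. I would isolate the $Z=X$ term. Since $|X|=|Y|$, the condition $\sigma(X)\subseteq Y$ forces $\sigma(X)=Y$, and each such $\sigma$ factors uniquely as $\sigma=\tau\pi$ with $\pi\in S_X\times S_Y$; therefore $P(X)=(-1)^n\tau A_X A_Y$. Combined with $(-1)^{|X|}=(-1)^n$, this means the $Z=X$ contribution equals precisely $\tau A_X A_Y$, yielding the clean identity
\[
(1-\tau)A_X A_Y=\sum_{Z\subsetneq X}(-1)^{|Z|}P(Z)\quad\text{in }C[S_{2n}].
\]

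It remains to show $P(Z)\cdot m\in\mathcal{CAP}_{n+1}$ whenever $Z\subsetneq X$, say $|Z|=k\le n-1$. Grouping the summands of $P(Z)$ by $W:=\sigma(Z)\subseteq Y$ and factoring $\sigma=\sigma_W\pi$ with $\pi\in S_Z\times S_{(X\cup Y)\setminus Z}$ gives
\[
P(Z)=\sum_{W\subseteq Y,\,|W|=k}\sgn(\sigma_W)\,\sigma_W\cdot A_Z\,A_{(X\cup Y)\setminus Z}.
\]
Since $m$ places distinct variables at the $2n-k\ge n+1$ positions of $(X\cup Y)\setminus Z$, the polynomial $A_{(X\cup Y)\setminus Z}\,m$ is multilinear and alternating in at least $n+1$ distinct variables, so it is expressible as a linear combination of $\Capl_{n+1}$-evaluations and thus lies in $\mathcal{CAP}_{n+1}$. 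The further factors $A_Z$ and $\sigma_W$ are position permutations of multilinear polynomials --- which coincide with variable substitutions --- and so preserve the $T$-ideal $\mathcal{CAP}_{n+1}$.

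The main obstacle I foresee is the sign bookkeeping at the $Z=X$ step: the coincidence $\sgn(\tau)=(-1)^n=(-1)^{|X|}$ is precisely what permits cancellation of the $\tau$ term on the left. A secondary technical point is a careful verification that a multilinear polynomial alternating in $\ge n+1$ distinct variables belongs to $\mathcal{CAP}_{n+1}$ even in the presence of interspersed $t$-variables, which is handled by organizing each alternating sum by shape and reading it as an explicit $\Capl_{n+1}$-expression with the $t$'s and non-alternating variables serving as the intermediate slots.
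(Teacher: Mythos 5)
Your proposal is correct and takes essentially the same route as the paper: both invoke Proposition~\ref{jan.3}, isolate the $Z=X$ term (identifying it with $\tau_{XY}A_XA_Y$ via the sign calculation $\sgn(\tau_{XY})=(-1)^n=(-1)^{|X|}$), and dispose of the remaining $P(Z)$ terms by noting they are alternating in at least $n+1$ variables and hence lie in $\mathcal{CAP}_{n+1}$. You merely make explicit two steps the paper leaves implicit — the shape-by-shape decomposition of a doubly alternating $f$ into multiples of $A_XA_Y\,m_c$, and the coset factorization of $P(Z)$ — so the structure of the argument is identical.
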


\medskip
\begin{proof} Let $X = \{ x_1,\dots,x_n \}$ and $Y= \{
y_1,\dots,y_n\}.$ Then  $|X|=|Y|=n$ and $X\cap Y=\emptyset$, and we
identify $S_{2n}=S_{X\cup Y}$.  Let $M=\{x_{i_1},\ldots
,x_{i_k}\}\subseteq X$, with $1\le i_1<\cdots <i_k\le n$, and
$N=\{y_{j_1},\ldots ,y_{j_k}\}\subseteq Y$, with $1\le j_1<\cdots
<j_k\le n$. Thus, $|M|=|N|=k\le n$.  $M$ will play the role of $Z$
in Proposition~\ref{PrZubrilinS2m}. We consider permutations $\sg\in
S_{2n}$ with $\sg(M)=N$. Define the permutation
$$
\tau_{MN}=(x_{i_1},y_{j_1})\cdots (x_{i_k},y_{j_k}).
$$
Since $M\cap N=\emptyset$, $\tau_{MN}$ has order 2 in $S_{2n}$,
and
 satisfies $\sgn(\tau_{MN})= (-1)^k$.
 If  $M=X$ then $N=Y$ and
 $
 \sgn(\tau_{MN})= \sgn(\tau_{XY})=(-1)^n.
 $
 Moreover $\tau_{MN}(M)=N$ and $\tau_{MN}(N)=M$.

 \medskip

Next, we define
 $$
 T_{MN}=\sum_{\pi(M)=N} \sgn(\pi) \cdot\pi \in C[S_n].
 $$
   Let $\rho=\tau_{MN}\cdot\pi$, so that $\rho(M)=M$. Then      $\pi=\tau_{MN}\cdot\rho$
 and $$
 T_{MN}=\sgn({\tau_{MN}})\cdot \tau_{MN}\cdot\left(\sum_{\rho(M)=M}\sgn(\rho) \cdot\rho \right).
$$
But by Proposition~\ref{jan.3},
 \begin{eqnarray}\label{jan.5}
 \sum_{M\subseteq X}(-1)^{|M|}P(M)=\sum_{\sg(X)=X}\sgn\sg\cdot\sg.
 \end{eqnarray}
 If $M\subsetneq X$,  then $P(M)$ is alternating on $2n-|M|\ge n+1$ indeterminates, and hence is~$0$
 modulo $\mathcal{CAP} _{n+1}$. Thus, modulo
 %%%%%%modulo ...
 $\mathcal{CAP} _{n+1}$, the left hand side of~\eqref{jan.5} equals the unique summand with $M=X$,
 which is
$$(-1)^n\sum_{\sg(X)=Y}\sgn(\sg)\cdot\sg=(-1)^n { \sgn({\tau_{XY}})}\cdot\tau_{XY}\cdot\left(\sum_{\sg(Y)=Y}\sgn(\sg)\cdot\sg\right)=
 \tau_{XY}\cdot\left(\sum_{\sg(Y)=Y}\sgn(\sg)\cdot\sg\right).
$$
Since $\sg(X)=X$ if and only if $\sg(Y)=Y$,
it follows that
$$
\sum_{\sg(Y)=Y}\sgn(\sg)\cdot\sg = \sum_{\sg(X)=X}\sgn(\sg)\cdot\sg
\equiv    \tau_{XY}\cdot
\left(\sum_{\sg(Y)=Y}\sgn(\sg)\cdot\sg\right),\quad \mbox{modulo}
~\mathcal{CAP} _{n+1}.
$$
Now we identify elements in $C[S_{2n}]$ with  polynomials
multilinear in $x_1,\ldots,x_n,y_1,\ldots,y_n$.
 Taking a monomial $h(x_1,\ldots,x_n,y_1,\ldots,y_n; \vec t)$  multilinear in
$x_1,\ldots,x_n,y_1,\ldots,y_n$, we define
$$
f(x_1,\ldots,x_n,y_1,\ldots,y_n;  \vec t) := \left(\sum_{\sg(Y)=Y}\sgn(\sg)\cdot\sg \right) h.
$$
Then
$$
\tau_{XY}\cdot
\left(\sum_{\sg(Y)=Y}\sgn(\sg)\cdot\sg\right)h=f(y_1,\ldots,y_n,x_1,\ldots,x_n;  \vec t).
$$
%If $\sg(Y)=Y$ then necessarily $\sg(X)=X$.
Again, since $\sg(X)=X$ if and only if $\sg(Y)=Y$, it follows that
$f(x_1,\ldots,x_n,y_1,\ldots,y_n;  \vec t)$ is doubly alternating, and we have
proved that
  $$
 f(x_1,\ldots,x_n,y_1,\ldots,y_n;  \vec t)\equiv  f(y_1,\ldots,y_n,x_1,\ldots,x_n ;\vec t)\quad\mbox {
modulo} ~~\mathcal{CAP}_{n+1}, $$ as desired.

\medskip

\end{proof}

%\begin{cor}\label{CoZubr2Grp}
%Let $f$ be a polynomial as in the Lemma  \ref{LeZubr2Grp}. Then
%$$f(x_1,\dots,x_n,y_1,\dots,y_n,\vec{t}\;  )\equiv f(y_1,\dots,y_n,x_1,\dots,x_n,\vec{t}\;  )\quad modulo %~\mathcal{CAP} _{n+1}  $$
%\end{cor}
%\begin{proof}
%The proof follows from Lemma~\ref{LeZubr2Grp} and from the obvious fact that
%$$
%$\sum_{k=0, k\ne n}^{2m}T(C_{k,2n-k})\subseteq \mathcal{CAP} _{n+1}.
%$$
%\end{proof}

\subsubsection{ Proof of Proposition~\ref{len.2}}\label{len.22}$
$

We may assume that $h$ is a new indeterminate $z$. Recall that
$$
\delta_{k,z}^{(x,n)}(f(x_1,\dots,x_n,y_1,\dots,y_n,\vec{t}\;  ))=~~~~~~~~~~~~~~~~~~~~~~~~~~~~~~~~~~~~~~~~~~~~~~~~~
$$
$$~~~~~~~~~~~~~~~~~~~~~~~~=\sum_{1\le i_1<\dots<i_k\le n}f(x_1,\dots,
x_n,y_1,\dots,y_n,\vec{t}\;  )|_ {x_{i_ u}\mapsto {zx_{i_ u}}};
\quad u=1,\dots, k,
$$
and
$$\delta_{k,z}^{(y,n)}(f(x_1,\dots,x_n,y_1,\dots,y_n,\vec{t}\;  ))=~~~~~~~~~~~~~~~~~~~~~~~~~~~~~~~~~~~~~~~~~~~~~~~~~
$$
$$~~~~~~~~~~~~~~~~~~~~~~~~=\sum_{1\le i_1<\dots<i_k\le n}f(x_1,\dots,
x_n,y_1,\dots,y_n,\vec{t}\; )|_{{y_{i_ u}}\mapsto{zy_{i_ u}}} ;
\quad u=1,\dots, k.
$$

Let $z'=1+\varepsilon z$, $\varepsilon$ being a central
indeterminant. Then clearly
\begin{eqnarray}\label{epsilon.01}
f(z'x_1,\dots,z'x_n,y_1,\dots,y_n,\vec{t}\;)=\sum_{k=0}^n
\varepsilon^k\cdot \delta_{k,z}^{(x,n)}(f(x_1,\dots,x_n,y_1,\dots,y_n,\vec{t}\;  ))
\end{eqnarray}
%$\overrightarrow{}$
and
\begin{eqnarray}\label{epsilon.1}
f(x_1,\dots,x_n,z'y_1,\dots,z'y_n,\vec{t}\;)=\sum_{k=0}^n
\varepsilon^k \cdot \delta_{k,z}^{(y,n)}(f(x_1,\dots,x_n,y_1,\dots,y_n,\vec{t}\;  )).
\end{eqnarray}

%{explicitly}

%{\bf The proof of Proposition~\ref{len.2}}

By Equations~\eqref{epsilon.01} and~\eqref{epsilon.1}  it is enough to show that
$$f(z'x_1,\dots,z'x_n,y_1,\dots,y_n,\vec{t}\;  )\equiv f(x_1,\dots,x_n,z'y_1,\dots,z'y_n,\vec{t}\;  )
\quad\mbox { modulo} ~~\mathcal{CAP}_{n+1}.$$

Let
$$g_1(x_1,\dots,x_n,y_1,\dots,y_n,\vec{t}\;  )=f(z'x_1,\dots,z'x_n,y_1,\dots,y_n,\vec{t}\;  )$$
and
$$g_2(x_1,\dots,x_n,y_1,\dots,y_n,\vec{t}\;  )=f(x_1,\dots,x_n,z'y_1,\dots,z'y_n,\vec{t}\;  ).$$
We have to show that $$g_1\equiv g_2 \quad\mbox { modulo}
~~\mathcal{CAP}_{n+1}.$$

\medskip
Denote $x_i'=z'x_i, ~y_i'=z'y_i;~i=1,\dots,n.$ Then
%Since $g_1,g_2$ are double alternating, we can apply Corollary
%\ref{CoZubr2Grp}, and similarly for $f(x',y)$ and $f(x,y')$:
%\newpage
$$
% \delta_{k,z'}^{(x,n)} f =
 g_1(x_1,\dots,x_n,y_1,\dots,y_n,\vec{t}\;  )= f(x_1',\dots,x_n',y_1,\dots,y_n,\vec{t}\;  )\equiv~~~~~~~~~~~~~~~~~~~~~~~~~~~~~~~~~~~~~~
$$
$$
\equiv f(y_1,\dots,y_n,x_1',\dots,x_n',\vec{t}\;  )=
$$
$$~~~~~~~~~~~~~~~~~
=g_2(y_1,\dots,y_n,x_1,\dots,x_n,\vec{t}\;  )\equiv~~~~~~~~~~~~~~~
$$
$$~~~~~~~~~~~~~~~~~~~~~~~~~~~~~~
\equiv g_2(x_1,\dots,x_n,y_1,\dots,y_n,\vec{t}\;  )
% =  \delta_{k,z'}^{(y,n)} f
\quad\mbox { modulo} ~~\mathcal{CAP}_{n+1}.
$$
The congruences follow from Lemma~\ref{LeZubr2Grp}
%Corollary~\ref{CoZubr2Grp},
since both $f$ and $g_2$ are doubly alternating.

%\end{proof}

%\newpage

\section{Proof of Kemer's ``Capelli Theorem,''}\label{section.strong}

To complete the proof of Theorem~\ref{BKR8}, it remains to present
an exposition of Kemer's ``Capelli Theorem,'' that any affine PI
algebra over a field $ F $ satisfies a Capelli identity $\Capl_n $
for large enough $n$. This is done by abstracting a key property
of  $\Capl_n $, called \textbf{spareseness}.

\begin{defn}\label{sparseB} A multilinear polynomial $g = \sum
\a_\sg x_{\sg(1)}\dots  x_{\sg(d)}$ is a {\bf sparse identity}
of~$A$ if, for any monomial $f(x_1, \dots, x_d;\vec t)$   we have
$$\sum \a _\sg f(x_{\sg(1)},\dots,  x_{\sg(d)};\vec t)\in \id (A).$$
\end{defn}

 See \cite[\S2.5.2]{BR}  for more detail. The major example of a sparse identity is the
 Capelli identity.
 One   proves rather quickly that any sparse identity implies a
Capelli identity, so it remains to show that any affine PI algebra
over a field satisfies a sparse identity. There are two possible
approaches, both using the classical representation theory of $S_n$.
One proof relies on ``the branching theorem,'' which requires
characteristic~0, and the other relies more on the structure of the
group algebra $F[S_n]$, also with the technique of ``pumping''
polynomial identities,  and works in arbitrary characteristic.

\subsection  {Affine algebras satisfying a sparse identity}
$ $

%
%
%\begin{remark}\label{lexi.80}$ $
%In Definition~\ref{sparseB}, to check that the polynomial $g =
%\sum \a_\sg x_{\sg(1)}\dots  x_{\sg(d)}$   is sparse, it is enough
%to assume that $f$ is a monomial, since any polynomial is a linear
%combination of monomials.
%\end{remark}

%
%We aim for the following intermediate theorem: %, to be obtained via
%%the ``pumping'' method described in detail in \cite{BR}:

%\begin{thm}\label{capelli.kemer}
% If $A$ is an affine
%algebra over a commutative ring $ C $ and $A$ satisfies a sparse
%identity, then $A$ satisfies a Capelli identity $\Capl_n$ for
%large enough $n$.
%\end{thm}

Sparse identities work well with the left lexicographic order $<$.
%We have the following trivial remarks.
%
%\begin{remark}\label{lexi.8}$ $
%\begin{enumerate}
%\item
%%\label{strong4}
%%1.
%Let $(k_1,\ldots,k_n)\in \mathbb{N}^n$. Fixing indices $j_1,
%\dots, j_d,$ we replace a subset $k_{j_1},\ldots, k_{j_d}$ by some
%numbers $b_{j_1},\ldots, b_{j_d}$; i.e., we take $c_i=k_i$ if
%$i\not\in\{j_1,\ldots,j_d\}$, and $c_i=b_i$ if
%$i\in\{j_1,\ldots,j_d\}$. Consider the left lexicographic order
%$<$. If $(b_{j_1},\ldots,b_{j_d}) <(k_{j_1},\ldots,k_{j_d})$, then
%$(c_1,\ldots,c_n)< (k_1,\ldots,k_n)$.
%
%\item
%\medskip
%%2.
If $b_1<\cdots < b_m$ and  $1\ne \sg\in S_m$, then
$(b_1,\ldots,b_m)<(b_{\sg(1)},\ldots,b_{\sg(m)})$.
%\end{enumerate}
%\end{remark}
Any   sparse identity over a field yields a powerful {\bf sparse
reduction procedure}. Namely, we may assume $\a _{(1)} = 1; $
given $a_1, \dots, a_d$ in $A$, we can replace any term $f(a_1,
\dots , a_d)$ by
$$-\sum _{ \sg \ne 1} \a _\sg f(x_{\sg(1)}\dots  x_{\sg(d)}, x_{d+1}, \dots, x_n).$$
(The analogous assertion also holds for $c_d$.)

%
%\begin{lem} [Sparse pumping lemma]   \label{Th2.104}
%Let $A$ be a $PI$-algebra satisfying a sparse identity ~$g$ of
%degree~$d$. Then for each homogeneous $d$-linear polynomial
%$f(x_1, \dots, x_n; y_1, \dots, y_m)$, any specializations $\bar
%x_i, \bar y_j$ of the $x_i$ and $y_j$,  and any words $v_i$ in the
%$\bar x_i$, $  f(v_1,\dots,v_n;\bar y_1, \dots, \bar y_m)$ is a
%linear combination of elements of the form $f(v'_1,
%\dots,v'_n;\bar y_1, \dots, \bar y_m)$ where  at most $d-1$ of the
%$v'_i$ have length $\ge d$, and the $\bar y_j$ remain unchanged.
%\end{lem}

\begin{lem}\label{pump}
Let $A=C\{a_1,a_2\ldots \}$ be a PI algebra, satisfying a sparse
multilinear identity~$p=\sum_{\sg\in S_d}\beta_ \sg x_{\sg(1)}\cdots
x_{\sg(d)}$ of degree $d$, with   $ d\le n$, and let $M(x_1,\ldots ,
x_n;\vec y\ )$ be a monomial  multilinear in $x_1,\ldots , x_n$ and
perhaps involving extra indeterminates $ \vec y$. We consider
$\Delta=M(v_1,\ldots,v_n;\overline {\vec y} ),$ where
$v_1,\ldots,v_n$
%$v_i$, $1\le i\le n$,
are words in the generators  $a_1,a_2,\ldots$ and
$\overline {\vec y}$ is an arbitrary specialization of $\vec y$ in $A$.  Assume that $k$ of the $v_i$ satisfy $|v_i|\ge d$ (length as
words in $a_1,a_2,\ldots$). If $\ell\ge d$, then $\Delta$ is a linear
combination of monomials $\Delta'=M(v_1',\ldots,v_n';\overline {\vec
y}\, )$ where at most $\ell-1$ of the words $~v_i'$ have length $\ge
d$.

%Then
%$\Delta$ is a linear combination of terms
%$\Delta'=M(v_1',\ldots,v_n';\bar Y)$ (same $\bar Y$) where at most
%$h-1$ of the $~v_i$ have length $\ge h$ (length as words in
%$a_1,a_2,\ldots$).
%
%\medskip

\end{lem}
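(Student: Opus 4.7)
The plan is to obtain the reduction by a single application of the sparse identity $p$, using $d$ of the $\ell$ long words as the source of its $d$ variables. The hypothesis $\ell\ge d$ is precisely what makes $d$ long words available to play that role.

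Concretely, I would select $d$ long words $v_{i_1},\dots,v_{i_d}$ with $i_1<\cdots<i_d$, and factor each as $v_{i_j}=u_jw_j$ for a prefix $u_j$ whose length will be specified below. The monomial $\Delta=M(v_1,\dots,v_n;\overline{\vec y}\,)$ can then be written as $f(u_1,\dots,u_d;\vec t\,)$ for some multilinear monomial $f$, with $\vec t$ absorbing the suffixes $w_j$, the unselected $v_i$'s, and the entries of $\overline{\vec y}$. After rescaling so that $\beta_{\mathrm{id}}=1$, the sparse identity yields $\sum_\sigma\beta_\sigma f(u_{\sigma(1)},\dots,u_{\sigma(d)};\vec t\,)\in\id(A)$, and hence
\[
\Delta \;=\; -\sum_{\sigma\neq\mathrm{id}}\beta_\sigma\,\Delta_\sigma,\qquad \Delta_\sigma\;=\;M(v_1',\dots,v_n';\overline{\vec y}\,),
\]
where $v_{i_j}'=u_{\sigma(j)}w_j$ at the selected positions and $v_i'=v_i$ elsewhere. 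Thus each $\Delta_\sigma$ is already of the required form as a substitution into $M$, and only the length claim on the $v_i'$ remains to be verified.

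The main obstacle — and the technical heart of the argument — is choosing the prefix lengths $|u_j|$ so that, for \emph{every} $\sigma\neq\mathrm{id}$, at least one of the new words $v_{i_j}'$ satisfies
\[
|v_{i_j}'|\;=\;|u_{\sigma(j)}|+|w_j|\;=\;|u_{\sigma(j)}|+|v_{i_j}|-|u_j|\;<\;d.
\]
The plan is to pick the sequence $|u_j|$ strictly monotonic in $j$ and adaptively tuned to the actual lengths $|v_{i_j}|$, exploiting the elementary fact that any $\sigma\neq\mathrm{id}$ in $S_d$ must have some index $j$ at which $|u_{\sigma(j)}|<|u_j|$ (for instance, the largest unfixed index $j$ always satisfies $\sigma(j)<j$). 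Working out the combinatorial bookkeeping that makes this length drop uniform in $\sigma$ — and large enough to push past the threshold $d$ for arbitrary $|v_{i_j}|\ge d$ — is where the real work lies. Once this choice of factorization is in hand, each $\Delta_\sigma$ has strictly fewer than $\ell$ positions carrying a word of length $\ge d$, giving the required representation of $\Delta$ as a linear combination of monomials with at most $\ell-1$ long $v_i'$.
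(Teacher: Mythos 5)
There is a genuine gap in your proposal, and it is precisely at the point you flag as ``the technical heart of the argument.'' A single application of the sparse identity cannot, in general, force some new word $v_{i_j}'$ to have length $<d$ for \emph{every} $\sigma\neq\mathrm{id}$; no choice of prefix lengths $|u_j|$ can achieve this when all the selected $v_{i_j}$ are long. Take $d=3$ and $|v_{i_1}|=|v_{i_2}|=|v_{i_3}|=100$, and consider any factorization $v_{i_j}=u_jw_j$ with $1\le |u_j|\le 100$. After applying $\sigma$, the new length at position $j$ is $|u_{\sigma(j)}|+100-|u_j|$. For the transposition $(1\,2)$ to yield a word of length $<3$, one of the two affected lengths must drop below $3$, which forces $\bigl||u_1|-|u_2|\bigr|>97$; likewise $(2\,3)$ forces $\bigl||u_2|-|u_3|\bigr|>97$ and $(1\,3)$ forces $\bigl||u_1|-|u_3|\bigr|>97$. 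These three gaps of size $>97$ cannot coexist inside the interval $\{1,\dots,100\}$: two of them already force a total spread exceeding $194$, while the available range is only $99$. So in this situation \emph{every} term $\Delta_\sigma$ with $\sigma$ a transposition still carries three long words, and your reduction never gets off the ground.

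What the paper does instead is not to ask a single application to reduce the count of long words, but to set up a descent. Write $v_{i_j}=w_ju_j$ with strictly \emph{decreasing} suffix lengths (the paper takes $|u_j|=d-j$), apply the sparse identity to permute the suffixes, and observe that for $\sigma\neq 1$ the first index $j_0$ with $\sigma(j_0)\neq j_0$ must satisfy $\sigma(j_0)>j_0$, hence $|u_{\sigma(j_0)}|<|u_{j_0}|$, hence $|v_{i_{j_0}}'|<|v_{i_{j_0}}|$, while all earlier entries are unchanged. Thus the length tuple $(|v_1'|,\dots,|v_n'|)$ strictly decreases in the left lexicographic order, which is a well-order on $\mathbb{N}^n$, so iterating the substitution must terminate; and termination can only occur when fewer than $d$ of the positions carry words of length $\ge d$, giving at most $d-1\le\ell-1$ long words. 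Your observation that a strictly monotone choice of $|u_j|$ forces a length drop at some position for every nontrivial $\sigma$ is exactly the right mechanism; the missing idea is to exploit that drop via a lexicographic descent and termination argument rather than trying to clear the threshold $d$ in one step.
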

This clearly implies that $A$ is spanned by monomials
$\Delta'=M(v_1',v_2',\ldots)$, with at most $d-1$ of the $~v_i'$ having
length $\ge d$.

\begin{proof}

\medskip {\bf Claim:} If  $|v_{i_1}|,\ldots,|v_{i_d}|\ge d$, then  $\Delta=M(v_1,\ldots,v_n;\overline {\vec y}\, )$ is a linear combination of terms
$\Delta'=M(v_1',\ldots,v_n';\overline {\vec y}\, )$ satisfying
$$
(|v_1'|,\ldots, |v_n'|) < (|v_1|,\ldots, |v_n|).
$$

 %By properties of
%the left lexicographic order, $(|v_1''|,\ldots, |v_n''|)  <
%(|v_1|,\ldots, |v_n|)$, and it follows that
%$M(v_1'',\ldots,v_n'';\overline {\vec y}\,
%)<M(v_1,\ldots,v_n;\overline {\vec y}\, )$.

\medskip
The above Claim implies the existence of descending sequences of
monomials, under the left lexicographic order. Such a descending
sequence must stop. When it stops we have a corresponding monomial
having strictly fewer words $v_i'$ for which $|v_i'|\ge d.$ Therefore
proving the above Claim will  prove the lemma. We now prove the
Claim.

\medskip

We rewrite $\Delta=M(v_{i_1},\ldots,v_{i_d};\overline
{\vec y}\, )$, where $i_1 < i_2, \dots < i_d;$ then we may assume that $i_1 = 1, \dots, i_d = d.$
%We denote $M(v'_1,\ldots,v'_h;\overline {\vec y}\, )<M(v_1,\ldots,v_h;\overline {\vec y}\, )$ if
%$(|v_1'|,\ldots,|v_h'|)<(|v_1|,\ldots,|v_h|)$ in the left lexicographic order.
We  write $v_i=w_iu_i$ where $|u_i|=d-i$, $1\le i\le
d$. The sparse identity $p$ implies that $\Delta$
 is a linear combination of terms
 $\Delta _\sg=M(w_1u_{\sg(1)},\ldots,w_hu_{\sg(d)};\overline {\vec y}\, )=M(v_1',\ldots,v_d';\overline {\vec y}\, )$
 where
 $1\ne \sg\in S_d$.
 ($\Delta$ itself corresponds to $\sg=1$.)
 \medskip
 To see this, we rewrite $\Delta=M(w_1u_1,\ldots, w_du_d;\overline {\vec y}\, )$    as $N(u_1,\ldots, u_d;\overline{W}).$
  The sparse identity $p$
 implies that  $ N(u_1,\ldots, u_d;\overline{W})$  is a linear combination of elements of the form
 $$
 N(u_{\sg(1)},\ldots, u_{\sg(d)};\overline{W})=M(w_1u_{\sg(1)},\ldots,w_du_{\sg(d)};\overline {\vec y}\, ), ~~1\ne\sg\in S_d.
 $$
Denote $w_iu_{\sg(i)}=v'_i$, $1\le i\le d$. But then
% parts 1 and 2 of
$(|v_1'|,\ldots,|v_d'|)<(|v_1|,\ldots,|v_d|)$  for such $\sg\ne 1$. This proves the
 Claim, and completes the proof of the lemma.
\end{proof}

Although we did not apply Shirshov's Height Theorem,
%~\ref{s.h.t.2},
 the main argument here is similar.
%~\ref{lat1}.
Note also that Lemma~\ref{pump} applies to any PI algebra, not
necessarily affine. In the next theorem, due to Kemer,
%~\cite{kemer.0.5},
we do assume that $A$ is affine.

\begin{thm}\label{kemer.capelli}
Let $A=C\{a_1,\ldots,a_r\} $ be an affine   PI algebra over a
commutative ring $ C $, satisfying a sparse identity $p$ of degree
$d$, and let $n\ge r^d+d$. Then $A$ satisfies the Capelli identity
$\Capl_n[x;y]$.
\end{thm}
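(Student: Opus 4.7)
The plan is to verify $\Capl_n(b_1,\dots,b_n;c_1,\dots,c_n)=0$ for every substitution in $A$. By multilinearity of $\Capl_n$, I may take each $b_i$ to be a word in the generators $a_1,\dots,a_r$, and by the alternation of $\Capl_n$ in its $x$-arguments any tuple with a repeated $b_i$ already yields zero, so I focus on $n$-tuples $(b_1,\dots,b_n)$ of pairwise distinct words. The number of distinct words of length less than $d$ in $r$ letters is bounded by $\sum_{k<d} r^k\le r^d$, so the hypothesis $n\ge r^d+d$ forces at least $d$ of the $b_i$ to have length $\ge d$. Fix such long positions $i_1<\cdots<i_d$ and decompose $b_{i_j}=w_j u_j$ with $|u_j|=d-j$, exactly as in the proof of Lemma~\ref{pump}.

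Each monomial in the expansion of $\Capl_n(b_1,\dots,b_n;c_1,\dots,c_n)$ is multilinear in $u_1,\dots,u_d$, with the $w_j$'s, the short $b_k$'s, and the $c_k$'s playing the role of the auxiliary indeterminates $\vec t$ of Definition~\ref{sparseB}. Applying the sparse identity $p=\sum_\sigma \alpha_\sigma x_{\sigma(1)}\cdots x_{\sigma(d)}$ monomial-by-monomial and summing yields
$$\sum_{\sigma\in S_d}\alpha_\sigma\,\Capl_n(b^\sigma_1,\dots,b^\sigma_n;c_1,\dots,c_n)=0\quad\text{in }A,$$
where $b^\sigma_{i_j}=w_j u_{\sigma(j)}$ for $j=1,\dots,d$ and $b^\sigma_k=b_k$ for $k\notin\{i_1,\dots,i_d\}$. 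Normalizing so that $\alpha_1$ is a unit (automatic over a field), this rearranges to express $\Capl_n(b_1,\dots,b_n;c_1,\dots,c_n)$ as a $C$-linear combination of the $\Capl_n(b^\sigma_1,\dots,b^\sigma_n;c_1,\dots,c_n)$ with $\sigma\ne 1$. For any such $\sigma$, at the smallest index $j$ with $\sigma(j)\ne j$ one has $\sigma(j)>j$, so $|b^\sigma_{i_j}|<|b_{i_j}|$, while $b^\sigma_k=b_k$ for every $k<i_j$; thus the length tuple $(|b^\sigma_1|,\dots,|b^\sigma_n|)$ is strictly smaller than $(|b_1|,\dots,|b_n|)$ in the left-lexicographic order.

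I would close the argument by strong induction on this length tuple, which is well-founded since $\sum_k |b_k|$ is preserved under the reduction. For each $\sigma\ne 1$, the tuple $b^\sigma$ either has a repeated entry (so $\Capl_n(b^\sigma_1,\dots,b^\sigma_n;c_1,\dots,c_n)=0$ by alternation) or is distinct and, by the same pigeonhole count, still has $\ge d$ long entries, to which the induction applies at any fresh choice of $d$ long positions; the lex-minimal cases are all forced into repetition. The main obstacle I anticipate is the passage from $\alpha_1\,\Capl_n(b_1,\dots,b_n;c_1,\dots,c_n)=0$ to $\Capl_n(b_1,\dots,b_n;c_1,\dots,c_n)=0$ when the base ring $C$ is not a field and $\alpha_1$ need not be a unit: one would then have to combine the full family of sparse relations obtained by varying which $d$-subset of long positions is selected and which permutation in $S_d$ plays the role of the identity after relabeling, rather than rely on a single relation. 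The paper's recourse to the representation theory of $S_n$ is likely motivated by precisely this difficulty, together with the bookkeeping needed to ensure that the descent in the fixed global lex order survives each re-application of Lemma~\ref{pump}'s Claim at new long positions.
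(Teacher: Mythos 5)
Your argument is correct and rests on the same two ingredients the paper uses: the sparse-reduction descent and the pigeonhole count of short words. The difference is one of organization. The paper's proof is a two-liner: it invokes Lemma~\ref{pump} once to write $\Capl_n(v_1,\ldots,v_n;\vec w)$ as a linear combination of terms in which at most $d-1$ of the arguments are long, then observes that among the remaining $\ge n-(d-1)>r^d$ short arguments two must coincide, so each term vanishes by alternation. You instead inline the content of Lemma~\ref{pump}: you first discard tuples with repetitions by alternation, then pigeonhole to produce $d$ long entries, carry out the decomposition $b_{i_j}=w_ju_j$ and the lex descent by hand, and close by well-founded induction on the length tuple (well-founded because $\sum|b_i|$ is fixed). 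Both work; the paper's version buys you a cleaner separation of concerns, and yours makes the descent explicit inside the Capelli context. Two clarifications on your closing worries. First, the normalization $\alpha_{\mathrm{id}}=1$ is not an issue over a general commutative ring: the paper's convention is that a PI has at least one coefficient equal to $1$, say $\alpha_{\sigma_0}=1$, and replacing $p$ by $p(x_{\sigma_0^{-1}(1)},\ldots,x_{\sigma_0^{-1}(d)})$ (a relabeling which preserves sparseness) moves the unit coefficient to the identity permutation, after which the descent in Lemma~\ref{pump} goes through verbatim. Second, the representation theory of $S_n$ plays no role in this theorem; it appears only later (Theorem~\ref{Regcon} and Theorem~\ref{sparse}) to prove that affine PI algebras \emph{have} sparse identities, which is a separate input to the Capelli result, not a device for handling the base-ring subtlety you anticipated.
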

\begin{proof}
We may assume that $r\ge 2$, since otherwise $A$ is commutative.
Consider $$\Capl_n(v_1,\ldots, v_n; w_1,\ldots, w_n )$$ where
$v_i, w_i\in A$. By Lemma~\ref{pump} we may assume that at most
$d-1$ of the $v_i$ have length $\ge d$ (as words in the generators
$a_1\ldots,a_r$). Hence at least $n-(d-1)$ of the $v_i$ have length $\le
d-1$. The number of distinct words of length $q$ is $\le r^q$.
Hence the number of words of length $\le d-1$ is
$$
\le 1+r+r^2+\cdots +r^{d-1}=\frac{r^d-1}{r-1}<r^d\quad\mbox{(since
$r\ge 2$)}.
$$
But we have at least $n-(d-1)$ such words appearing in  $v_1,\ldots,v_n$,
and $n-(d-1)> r^d$ (since by assumption $n\ge r^d+d$). It follows
that there must be repetitions  among $v_1,\ldots,v_n$, so
$\Capl_n(v_1,\ldots,v_n;w_1,\ldots, w_n )=0$.
\end{proof}

\subsection{Actions of the group algebra}\label{preliminaries4kemer}
$ $

It remains to prove the existence of sparse identities for affine
PI-algebras. For this, we turn to the representation theory of
$S_n$. After a brief review of actions of $S_n$ on Young diagrams,
we treat the characteristic 0 case, cf.~Kemer~\cite{kemer.0.5}.
% which is better known.
The characteristic $p>0$ proof, which requires some results about
modular representations but bypassing branching, is done in
\S\ref{preliminaries4kemer} and \S\ref{charp}.

 Given $\sg,\pi\in
S_n$, by convention we take $\sg \pi (i) = \pi(\sg(i)).$ The product
$\sg\pi$  corresponding  (by Definition ~\ref{identify})  to the
monomial
$$M_{\sg\pi} = x_{\sg\pi (1)}\cdots x_{\sg\pi (n)}$$ can be interpreted
in two ways, according to  {\bf left} and  {\bf right} actions of
$S_n$ \index{left action of $S_n$} \index{right action of $S_n$}
on $V_n$, described respectively as follows:

Let $\sg,\pi\in S_n$.  Let $y_i=x_{\sigma(i)}$. Then

\medskip

 (i) $\quad \sg M_\pi(x_1\ldots , x_n)  := M_{\sg\pi} =
M_\pi(x_{\sg (1)},\ldots ,x_{\sg (n)})\quad$ and

(ii) $\quad M_\sg (x_1\ldots , x_n)\pi := (y_1\cdots y_n)\pi=
M_{\sg\pi}  = y_{\pi (1)}\cdots y_{\pi (n)}. $

%\begin{lem}\label{iden1}
%
%\begin{enumerate}
%\item[(i)]\ \ $ \sg M_\pi(x_1\ldots , x_n)= M_\pi(x_{\sg
%(1)},\ldots ,x_{\sg (n)}); $ \item[(ii)] \ $ M_\sg (x_1\ldots ,
%x_n)\pi=(y_1\cdots y_n)\pi = y_{\pi (1)}\cdots y_{\pi (n)}. $

\noindent Thus, the effect of the right action  of $\pi$ on a
monomial is to permute the places of the indeterminates according
to $\pi$.

\medskip

Extending  by linearity, we obtain for any $f=f(x_1,\ldots ,
x_n)\in V_n$ that

\begin{enumerate}\item[(i)]
$\sg p(x_1,\ldots , x_n)= p(x_{\sg (1)} ,\ldots , x_{\sg (n)}); $
 \item[(ii)]\
$ p(x_1,\ldots , x_n)\pi=q(y_1,\ldots , y_n)$, where $q(y_1,\ldots
, y_n)$ is obtained from $p(x_1,\ldots , x_n)$ by place-permuting
all the monomials of $p$ according to the permutation $\pi$.
\end{enumerate}

For any finite group $G$ and field $F$, there is a well-known
correspondence between the $F[G]$-modules and the representations
of $G$. The simple modules correspond to the irreducible
representations.

\begin{rem} If $p\in \Id (A)$, then $\sigma p\in \Id (A)$ since the left action is just a change of
variables.

Hence, for any PI-algebra  $A$, the spaces
\[
\Id (A)\cap V_n\subseteq V_n
\]
are in fact left ideals of $F[S_n]$ (thereby affording certain
$S_n$ representations), but
% On the other hand, these
need not be two-sided ideals. However, we prove below the existence
of a nonzero two-sided ideal in $\Id(A)\cap V_n$, a fact which is of
crucial importance in what follows.
%  and this is the focus of our inquiry.
%
%We shall study the quotient modules
%\[
%V_n/(\id (A)\cap V_n)
%\]
%and their corresponding $S_n$-characters. Of course, these
%characters are independent of the particular choice of  $x_1,
%\dots, x_n$. The above obviously leads us into  the theory of
%group representations, which we shall review in the next section.
\end{rem}

\begin{rem}\label{semiidem}

 Let $\lambda$ be a partition. As explained in \cite[p.~147]{BR}, any tableau
 $T$ of $\lambda$
gives rise to an element  $$a_T = \sum _{q\in {\mathcal
C_{T_\lambda}},\ p\in {\mathcal R_{T_\lambda}}} \sgn(q) qp \in
C[S_n],$$ where $ \mathcal C_{T_\lambda}$ (resp.~ $ \mathcal
R_{T_\lambda}$ ) denotes the set of column (resp.~row) permutations
of the tableau $T_\lambda$. $a_T^2 = \a _T a_t$ for some $\a_T$ in
the base field $F$. When $ \a _T \ne 0$, which by
\cite[Lemma~19.59(i)]{rowen5} is always the case when
$\operatorname{char}(F)$ does not divide $n$, in particular, when
$\operatorname{char}(F)=0$,
 we will call the idempotent
$e_T: =  \a _T^{-1} a_T $ the \textbf{Young symmetrizer} of the
tableau $T$.

Furthermore, by \cite[Lemma~19.59(i)]{rowen5}, if $a_T \ne 0$ and
then $F[S_n] a_T = Fa_T,$ implying $F[S_n] a_T $ (if nonzero) is a
minimal left ideal, which we call $J_\lm. $ Thus, if $J_\lm$
contains an element corresponding to a nontrivial PI of $A$, $a_T$
itself must correspond to a PI of $A$.

 $s^\lm : = \dim J_\lm $
is given by the ``hook" formula, see for example~\cite{sagan} or
\cite{jameskerber}, where we recall that each ``hook" number $h_x$
for a box $x$ is the number of boxes in ``hook" formed by taking all
boxes to the right of $x$ and beneath $x$. (In the literature, one
writes $f^\lm$ instead of $s^\lm$, but here we have used $f$
throughout for polynomials.)
\end{rem}

 \begin{lem}\label{hooked} Suppose $L$ is a minimal left ideal of
 a ring $R$. Then the minimal two-sided ideal of $R$ containing
 $L$ is a sum of minimal left ideals of $R$ isomorphic to $L$ as
 modules.
\end{lem}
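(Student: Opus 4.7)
The plan is to identify the minimal two-sided ideal explicitly and then exhibit it as a sum of left translates of $L$ under right multiplication by elements of $R$.

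First I would observe that the minimal two-sided ideal of $R$ containing $L$ is $I := L + LR$ (and just $LR$ if $R$ has a unit), since $L$ is already a left ideal, so the two-sided ideal generated by $L$ collapses via $RL \subseteq L$ to $L + LR$. Writing $LR = \sum_{r \in R} Lr$ reduces the problem to analyzing each $Lr$.

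Next, for each $r \in R$, I would consider the map $\varphi_r : L \to Lr$ defined by $\ell \mapsto \ell r$. Because left multiplication commutes with right multiplication, $\varphi_r$ is a homomorphism of left $R$-modules, and it is visibly surjective onto $Lr$. The image $Lr$ is a left ideal of $R$ (closed under left multiplication since $L$ is). Since $L$ is minimal (i.e.\ simple as a left $R$-module), $\ker \varphi_r$ is either $0$ or all of $L$; hence either $Lr = 0$, or $\varphi_r$ is an isomorphism and $Lr$ is a minimal left ideal isomorphic to $L$.

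Putting these together, $I = L + \sum_{r : Lr \ne 0} Lr$ is a sum of minimal left ideals each isomorphic to $L$, as desired. I do not expect any real obstacle here: the only points requiring care are (a) noting that $Lr$ is indeed a left ideal, which uses associativity and the fact that $L$ is a left ideal, and (b) handling the unital vs.\ non-unital case so that $L$ itself appears (or is absorbed into $LR$). Everything else is a direct consequence of Schur-style simplicity of $L$.
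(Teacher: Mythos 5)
Your proof is correct and is the standard argument; the paper in fact states this lemma without proof, treating it as well known, so there is no paper proof to compare against. You correctly reduce the two-sided ideal generated by $L$ to $L + LR$ (using $RL \subseteq L$), observe that each map $\ell \mapsto \ell r$ is a left-module homomorphism $L \to Lr$ with image a left ideal, and invoke simplicity of $L$ to conclude each nonzero $Lr$ is a minimal left ideal isomorphic to $L$. No gaps.
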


We let $I_\lm$ denote the minimal two-sided ideal of $F[S_n]$
containing $J_\lm$.

We define the \textbf{codimension} $c_n(A)=\dim
\left(\frac{V_n}{\Id(A)\cap V_n}\right).$ The characteristic 0
version of the next result is in \cite{regev5}.

\begin{lem}\label{hook4}
Let  $A$ be an $F$-algebra, and let $\lm$ be a partition of $n$. If
$\dim J_\lm>c_n(A)$, then $I_\lm\subseteq \Id(A)\cap V_n$.
\end{lem}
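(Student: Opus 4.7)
The plan is to translate everything to the group algebra $F[S_n]$ via the identification from Definition~\ref{identify}, under which $V_n$ becomes the regular left $F[S_n]$-module. By the Remark preceding Lemma~\ref{semiidem}, $L:=\Id(A)\cap V_n$ is a left ideal of $F[S_n]$, and by definition $\dim (F[S_n]/L)=c_n(A)$. So I need to show that if $\dim J_\lambda>c_n(A)$ then the two-sided ideal $I_\lambda$ lies inside $L$.

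First I would argue by contradiction: suppose $I_\lambda\not\subseteq L$. By Lemma~\ref{hooked}, $I_\lambda$ is a sum $\sum_i J_i$ of minimal left ideals of $F[S_n]$, each isomorphic to $J_\lambda$ as left modules. Since $I_\lambda\not\subseteq L$, at least one summand $J_i$ satisfies $J_i\not\subseteq L$. Minimality of $J_i$ then forces $J_i\cap L=0$ (otherwise $J_i\cap L$ would be a nonzero proper left submodule of $J_i$).

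Next I would consider the composition $J_i\hookrightarrow F[S_n]\twoheadrightarrow F[S_n]/L$; by the previous step this is injective, so
\[
\dim J_\lambda \;=\; \dim J_i \;\le\; \dim(F[S_n]/L) \;=\; c_n(A),
\]
contradicting the hypothesis $\dim J_\lambda>c_n(A)$. Hence $I_\lambda\subseteq L=\Id(A)\cap V_n$, as required.

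The argument is essentially a pigeonhole once one has set up the module-theoretic dictionary, so the only subtle point is invoking Lemma~\ref{hooked} to express $I_\lambda$ as a sum of copies of $J_\lambda$; this is what allows us to transfer the dimension lower bound from one minimal summand to $I_\lambda$ itself. In characteristic zero (which is the setting of Remark~\ref{semiidem} via the Young symmetrizer) this decomposition is transparent from the semisimplicity of $F[S_n]$, and no further work is needed.
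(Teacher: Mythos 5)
Your argument is correct and matches the paper's proof essentially step for step: both invoke Lemma~\ref{hooked} to write $I_\lambda$ as a sum of minimal left ideals isomorphic to $J_\lambda$, use the minimality dichotomy ($J\subseteq L$ or $J\cap L=0$), and derive a dimension contradiction against $c_n(A)$. The only cosmetic difference is that you frame it as a proof by contradiction while the paper argues directly that each summand lands in $\Id(A)\cap V_n$.
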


\begin{proof}
 By Lemma~\ref{hooked},  $J_\lm$ is a sum of minimal left ideals, with
 each such  minimal left ideal~$J$ isomorphic to $J_\lm$.
Thus, $ \dim J = \dim J_\lm >c_n(A)$. Since $J$ is minimal, either
$J\subseteq \Id(A)\cap V_n$ or $J\cap\big (\Id(A)\cap V_n\big)=0$.
If $J\cap\big(\Id(A)\cap V_n\big)=0$ then it follows that \[c_n(A)=
\dim V_n/\big(\Id(A)\cap V_n\big)\ge \dim J>c_n(A), \] a
contradiction. Therefore each $J\subseteq \Id(A)\cap V_n$.
$I_\lm\subseteq \Id(A)\cap V_n$ since $I_\lm$ equals the sum of
these minimal left ideals.
\end{proof}

\subsection{The characteristic 0 case ~\cite{kemer.0.5}}$ $
The     characteristic 0 case is treated separately here, since it
can be handled via the classical representation theory of the
symmetric group.  By Maschke's Theorem, the group algebra $FS_n$
now is a finite direct product of matrix algebras over $F$. We
have the decomposition $F S_n=\bigoplus_{\lm\vdash n}I_\lm$.

Thus, Lemma~\ref{hook4} yields at once:

\begin{lem}\label{hook44}~\cite{regev5}
Let $\operatorname{char}(F)=0$, let $A$ be an $F$ algebra, and let
$\lm$ be a partition of $n$. If $s^\lm>c_n(A)$, then $I_\lm\subseteq
\Id(A)\cap V_n$.
\end{lem}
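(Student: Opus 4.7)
The plan is to observe that Lemma~\ref{hook44} is essentially an immediate specialization of Lemma~\ref{hook4} under the standing hypothesis $\operatorname{char}(F)=0$, so the work consists in matching the notation and confirming that the relevant idempotent theory is available.

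First, I would invoke Remark~\ref{semiidem}: since $\operatorname{char}(F)=0$ does not divide $n$, for any tableau $T$ of shape $\lm$ the element $a_T$ satisfies $a_T^2=\alpha_T a_T$ with $\alpha_T\ne 0$, so the Young symmetrizer $e_T=\alpha_T^{-1}a_T$ is a well-defined nonzero idempotent, and $J_\lm = F[S_n]a_T$ is a minimal left ideal of $F[S_n]$ of dimension $s^\lm$ by the hook formula. In particular, $\dim J_\lm = s^\lm$, so the hypothesis $s^\lm > c_n(A)$ is literally the hypothesis $\dim J_\lm > c_n(A)$ of Lemma~\ref{hook4}.

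Next, I would apply Lemma~\ref{hook4} directly to conclude $I_\lm \subseteq \Id(A)\cap V_n$. The underlying reason, which one might want to spell out, is that by Maschke's Theorem $F[S_n] = \bigoplus_{\mu\vdash n} I_\mu$ decomposes into a direct sum of simple two-sided ideals, each $I_\mu$ is generated (as a two-sided ideal) by any of its minimal left ideals, and those minimal left ideals are all isomorphic to $J_\mu$; this is Lemma~\ref{hooked} applied to $F[S_n]$, and it is precisely what Lemma~\ref{hook4} exploits to move from a single minimal left ideal sitting inside $\Id(A)\cap V_n$ to the entire two-sided ideal.

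There is no real obstacle here; the only thing to be careful about is the identification $s^\lm = \dim J_\lm$ and the fact that in characteristic~$0$ every $I_\lm$ is indeed simple (so Lemma~\ref{hooked} applies cleanly), both of which are guaranteed by Maschke's Theorem in characteristic~$0$. Thus the proof is a one-line deduction from Lemma~\ref{hook4} plus the characteristic-zero identification of the Young symmetrizer dimension with the hook number $s^\lm$.
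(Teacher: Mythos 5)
Your proof is correct and matches the paper's approach: the paper itself derives Lemma~\ref{hook44} in one line as an immediate consequence of Lemma~\ref{hook4}, exactly as you do, with the only extra ingredient being the characteristic-zero identification $s^\lm=\dim J_\lm$ from Remark~\ref{semiidem}. Your additional commentary on Maschke's Theorem and Lemma~\ref{hooked} is accurate but simply unpacks what Lemma~\ref{hook4} already encapsulates.
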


 (Here   $I_\lambda$ is the sum of those $F[S_n] e_T$ for
which $T$ is a standard tableau with partition $\lambda$.  These
$I_\lambda$ are minimal two sided ideals, each a sum of $s^\lm$
minimal left ideals isomorphic to $J_\lm$.)

\begin{example}\label{rect} Consider the ``rectangle'' of $u$ rows and $v$
columns. By ~\cite[page 11]{macdonald}, the hook numbers of the
partition $\mu=(u^v)$ satisfy
\[\sum_{x\in \mu }h_x=uv(u+v)/2=n\frac{ u+v}2. \]
Let us review the proof, for further reference. For any box $x$ in
the $(1,j)$ position, the hook has length $u+v-j$, so the sum of
all hook numbers in the first row is $$\sum _{j=1}^v (u+v-j) = uv
+ \frac {v(v-1)}2 = v\left(u + \frac{v-1}2\right).$$ Summing this
over all rows yields $$v\frac {u(u+1)}2 + uv\frac{v-1}2 = uv
\left(\frac{u+1}2+\frac{v-1}2\right)  = uv\frac{ u+v}2,$$ as
desired.
\end{example}

\subsubsection{Strong identities}$ $

%This subsection is largely historical, since
 %This leads us to the next definition.

\begin{defn}\label{strong.1}
Let $A$ be a PI algebra. The multilinear polynomial $g\in V_n$ is a
{\it strong} identity of $A$ if for every $m\ge n$ we have
$FS_m\cdot g\cdot FS_m \subseteq \Id(A)$.
\end{defn}

Note that every strong identity is sparse. To obtain strong
identities, we utilize the following construction, due to Amitsur.
%\subsection{Capelli--type polynomials}\label{capelli11}

The natural embedding $S_n\subset S_{n+1}$ (via $\sg(n+1)=n+1$ for
$\sg\in S_n$) induces the embedding $V_n\subset V_{n+1}$:
$f(x_1,\ldots ,x_n)\equiv f(x_1,\ldots ,x_n)\cdot x_{n+1}$. More
generally, for any $n<m$ we have the inclusion $V_n\subset V_m$
via $f(x_1,\ldots ,x_n)\equiv f(x_1,\ldots ,x_n)\cdot
x_{n+1}\cdots x_m$.

\medskip
%Recall Example~\ref{right.action2}:
%\begin{defn}\label{right4}
%\begin{enumerate}
%\item
For $f(x)=f(x_1,\ldots,x_n)=\sum_{\sg\in S_n}\alpha_\sg
x_{\sg(1)}\cdots x_{\sg(n)}\in V_n$,   we define
\begin{equation}\label{right.0.4}  f^*(x_1,\ldots,x_n;x_{n+1},\ldots,x_{2n-1})
=\sum_{\sg\in S_n}\alpha_\sg
x_{\sg(1)}x_{n+1}x_{\sg(2)}x_{n+2}\cdots
x_{\sg(n-1)}x_{2n-1}x_{\sg(n)}\end{equation}  $$\qquad
=(f(x_1,\ldots,x_n) x_{n+1}\cdots x_{2n-1})\eta,$$

%Note that by Remark~\ref{right2} (see~(\ref{right3})
%with $\eta$ replacing $\sg$),
%\[f^*(x)=(f(x_1,\ldots,x_n) x_{n+1}\cdots x_{2n-1})\eta\]
%
where $\eta\in S_{2n-1}$ is the permutation
\begin{eqnarray}\label{september.7.2}
\eta=\left ( \begin{array}{ccccccc}
1~ &2~ &3~ &4~ &\cdots~ &2n-1  \\
1~ &n+1~ &2~ &n+2~ &\cdots~ &n
\end{array} \right ).
\end{eqnarray}
Let $L\subseteq\{x_{n+1},\ldots,x_{2n-1}\}$ and denote by $f^*_L$
the polynomial obtained from $f^*$ by substituting $x_j\rightarrow
1$ for all $x_j\in L$. Rename the indeterminates in
$\{x_{n+1},\ldots,x_{2n-1}\}\setminus L$ as
$\{x_{n+1},\ldots,x_{n+q}\}$ (where  $q=n-1-|L|$) and denote the
resulting polynomial as  ${ f}^*_L$. Then similarly
to~\eqref{right.0.4}, there exists a permutation $\rho\in S_{n+q}$
such that ${ f}^*_L=(f x_{n+1}\cdots x_{n+q})\rho$.
%\end{enumerate}
%\end{defn}

\medskip
Note that if $1\in A$ and $f^*\in \Id(A)$, then also ${ f}^*_L\in
\Id(A)$
%hence also $ f^*_L\in \Id(A)$,
for any such $L$, and in particular $f\in \Id(A)$. The converse is
not true: it is possible that $f\in \Id(A)$ but $f^*\not\in \Id(A)$.

\begin{lem}\label{right5}
Let $A$ be a PI~algebra, let $I\subseteq  V_n$ be a two--sided ideal
in $V_n$,  and assume  for any $f\in I$ that $f^*\in \Id (A)$ (and
thus $f\in \Id(A)$). Then for any $m\ge n$,
$$(FS_m)I(FS_m)\subseteq \Id(A).$$
\end{lem}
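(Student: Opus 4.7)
The strategy reduces matters to showing that $f\pi \in \Id(A)$ for each $f \in I$ and $\pi \in S_m$, where $f$ is viewed in $V_m$ via $f \cdot x_{n+1} \cdots x_m$. Indeed, the left action of any $\sigma \in S_m$ is just the change of variables $x_i \mapsto x_{\sigma(i)}$ and thus preserves $\Id(A)$, so combined with $F$-bilinearity in $f$ this would give the entire ideal $(FS_m) I (FS_m) \subseteq \Id(A)$.

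To handle $f\pi$ for a fixed $f = \sum_\sigma \alpha_\sigma x_{\sigma(1)}\cdots x_{\sigma(n)}$, I first decode its shape. Let $p_1 < \cdots < p_n$ be the elements of $\pi^{-1}(\{1,\dots,n\})$, i.e., the positions in the monomials of $f\pi$ where the original indeterminates $x_1,\dots,x_n$ land; all other positions are occupied by fixed extras from $\{x_{n+1},\dots,x_m\}$, grouped into words $W_0,\dots,W_n$ (with $W_i$ the word of extras at positions strictly between $p_i$ and $p_{i+1}$, under the convention $p_0=0$, $p_{n+1}=m+1$). Define $\pi_0 \in S_n$ by $\pi_0(i) := \pi(p_i)$ and set $g := f\pi_0$. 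Because $I$ is two-sided in $V_n$, $g \in I$, and the hypothesis of the lemma yields $g^* \in \Id(A)$. A direct bookkeeping shows
$$f\pi = \sum_\tau \beta_\tau\, W_0\, x_{\tau(1)}\, W_1\, x_{\tau(2)}\, W_2 \cdots W_{n-1}\, x_{\tau(n)}\, W_n,$$
with coefficients $\beta_\tau = \alpha_{\tau\pi_0^{-1}}$; and, after relabeling the gap indeterminates of $g^*$ as $y_1,\dots,y_{n-1}$,
$$g^* = \sum_\tau \beta_\tau\, x_{\tau(1)}\, y_1\, x_{\tau(2)}\, y_2 \cdots y_{n-1}\, x_{\tau(n)}.$$

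The conclusion then follows from the $T$-ideal property of $\Id(A)$: substituting $y_i \mapsto W_i$ in $g^*$ (with $W_i = 1$ when $p_{i+1} = p_i + 1$, matching the $f^*_L$ specialization) and then left-multiplying by $W_0$ and right-multiplying by $W_n$ transforms $g^*$ into $f\pi$, with every step preserving $\Id(A)$. The main conceptual hurdle is that $g^*$ has a rigid interleaved pattern (originals at odd slots, exactly one gap variable between consecutive originals, nothing before the first or after the last), whereas $f\pi$ has an essentially arbitrary pattern in $V_m$; the two-sided-ideal hypothesis on $I$ is precisely what supplies the permutation $\pi_0$ needed to realign the coefficients, while the $T$-ideal property of $\Id(A)$ absorbs arbitrary gap words and flanking extras via substitution and multiplication.
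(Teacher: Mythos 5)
Your proposal is correct and follows essentially the same route as the paper's own proof: your $\pi_0$ is the paper's $\tau$, your gap words $W_0,\dots,W_n$ are its $g_0,\dots,g_n$, and the structure — peel off the left action, extract the induced $S_n$-permutation from $\pi^{-1}(\{1,\dots,n\})$, invoke two-sidedness of $I$ to get $f\pi_0 \in I$, then recover $f\pi$ from $(f\pi_0)^*$ by specializing the gap variables and flanking by $W_0$, $W_n$ — is identical, including the treatment of empty gaps via the $f^*_L$ device.
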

\begin{proof} Since~$(FS_m)I\subseteq
\Id(A)$, it suffices to prove:

\medskip
{\bf Claim:} If $f\in I$ and $\pi\in S_m$, then $f^*_L
\pi=(f(x_1,\ldots,x_n)x_{n+1}\cdots x_m)\pi\in \Id(A)$.

\medskip
 If
$f=\sum_{\sg\in S_n}a_\sg\sg (x_1\cdots x_n\cdots x_m)$, then
$f^*_L \pi=\sum_{\sg\in S_n}a_\sg\sg(\pi(x_1\cdots x_n\cdots
x_m))$.

\medskip
Consider the positions of $x_1,\ldots,x_n$ in the monomial
$\pi(x_1\cdots  x_m)$: There exists $\tau\in S_n$ such that
$$\pi(x_1\cdots x_n\cdots x_m)=g_0x_{\tau(1)}
g_1x_{\tau(2)}g_2\cdots g_{n-1}x_{\tau(n)}g_n=\tau(g_0x_1
g_1x_2g_2\cdots g_{n-1}x_ng_n) ,$$ where each $g_j$ is $=1$ or is
a monomial in some of the indeterminates $x_{n+1},\ldots, x_m$. It
follows that $f^*_L \pi=(f\tau)(g_0x_1 g_1x_2g_2\cdots
g_{n-1}x_ng_n)$. Since $f\in V_n$ and $\tau\in S_n$, $f\tau$ only
permutes the indeterminates $x_1,\ldots,x_n$, and hence
(see~\eqref{right.0.4})
\[f^*_L \pi=(f\tau)(g_0x_1 g_1x_2g_2\cdots
g_{n-1}x_ng_n)=g_0((f\tau)^*[x_1,\ldots,x_n;g_1,\ldots,g_{n-1}])g_n.\]
Since $I$ is two-sided, $f\tau\in I$, hence by assumption
$(f\tau)^*\in \Id (A)$, which by the last equality implies that
$f\pi\in \Id(A)$.
\end{proof}

%\subsection{An alternative approach via pumping}

\subsubsection{Existence of nonzero two-sided ideals $I_\lm\subseteq FS_n$ of identities}\label{late.need1}$ $

%The starting point towards proving the $(k,\ell)$ hook
%theorem~\ref{hook2} (or~\ref{hook.0.2}) is

Let $c_n(A)\le \alpha^n$ for all $n$. The next lemma yields
rectangles $\mu=(u^v)\vdash n$ such that $\alpha^n<s^\mu$.

\begin{lem}\label{hook5}
Let $0<u,v$ be integers and let $\mu $ be the $u\times v$
rectangle $\mu =(u^v)\vdash u\cdot v$. Let $ n =uv$.
Then
\[\left(\frac{n}{u+v}
\right)^n\cdot\left(\frac{2}{e} \right)^n <s^\mu  \qquad
(\mbox{where} \quad e=2.718281828\ldots).\] In particular, if
$\alpha\le \frac{n}{u+v}\cdot\frac{2}{e}$ then $\al^n\le s^\mu$.
\end{lem}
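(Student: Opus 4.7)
The plan is to use the hook length formula, bound the product of hook numbers from above via AM--GM using the sum of hooks computed in Example~\ref{rect}, and then apply a standard Stirling-type lower bound on $n!$. This converts the product of hooks into the desired explicit expression.

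More precisely, I would start from the hook length formula
$$s^\mu = \frac{n!}{\prod_{x\in \mu} h_x}.$$
From Example~\ref{rect} we already know the total hook sum:
$$\sum_{x\in \mu} h_x = \frac{uv(u+v)}{2} = \frac{n(u+v)}{2}.$$
Since there are $n$ boxes in $\mu$, the arithmetic mean of the hook lengths is $(u+v)/2$. The AM--GM inequality then gives a clean upper bound on the product of hooks:
$$\prod_{x\in \mu} h_x \;\le\; \left(\frac{1}{n}\sum_{x\in \mu} h_x\right)^{n} = \left(\frac{u+v}{2}\right)^{n}.$$

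Substituting this back yields the lower bound
$$s^\mu \;\ge\; \frac{n!}{\bigl((u+v)/2\bigr)^n}.$$
To finish, I would invoke the elementary Stirling-type estimate $n! > (n/e)^n$ (which follows at once from the power series $e^n = \sum_{k\ge 0} n^k/k! > n^n/n!$). Combining gives
$$s^\mu \;>\; \frac{(n/e)^n}{\bigl((u+v)/2\bigr)^n} = \left(\frac{n}{u+v}\right)^n \left(\frac{2}{e}\right)^n,$$
which is exactly the claimed inequality. The ``in particular'' statement is then immediate: if $\alpha \le \frac{n}{u+v}\cdot\frac{2}{e}$, then $\alpha^n \le \bigl(\frac{n}{u+v}\cdot\frac{2}{e}\bigr)^n < s^\mu$.

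There isn't really a hard step here; the only subtle point is recognising that AM--GM, applied in the ``wrong'' direction for product estimates, is actually the right tool because we want to bound $\prod h_x$ \emph{from above} in order to bound $s^\mu$ \emph{from below}. Everything else is bookkeeping: the hook-sum calculation has been done in Example~\ref{rect}, and the inequality $n!>(n/e)^n$ is standard.
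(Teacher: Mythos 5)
Your proof is correct and follows essentially the same route as the paper: bound $\prod_{x\in\mu} h_x$ from above via AM--GM using the hook-sum from Example~\ref{rect}, then combine with the hook length formula and the standard bound $n! > (n/e)^n$. The only cosmetic difference is that you also sketch the elementary derivation of $n!>(n/e)^n$, which the paper simply cites as classical.
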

\begin{proof}

%Recall from Example~\ref{s.n.b.s.1} that
Since the geometric mean is bounded by the arithmetic mean,
\[\left(\prod_{x\in\mu }h_x \right )^{1/n}\le\frac{1}{n}\sum_{x\in\mu }h_x
=\frac{u+v}{2},\] in view of Example~\ref{rect},
 and hence $$\left(\frac{2}{u+v}\right)^n\le
  \frac{1}{\prod_{x\in\mu }h_x }.$$

Together  with the classical inequality
$(n/e)^n<n!$, this implies that
\[
%\alpha^n\le
\left(\frac{uv}{u+v} \right)^n\cdot\left(\frac{2}{e}
\right)^n=\left(\frac{n}{e} \right)^n\cdot\left(\frac{2}{u+v}
\right)^n <\frac{n!}{\prod_{x\in\mu }h_x}=s^\mu .\]
\end{proof}

\begin{rem}\label{regest}To apply this, we need Regev's estimate ~\cite{regev1}
 of codimensions, $$c_m(A)\le (d-1)^{2m},$$ as explained in
\cite[Theorem~5.38]{BR}.
\end{rem}

\begin{prop}\label{hook6}~\cite{amitsur.regev}
Let $A$ be a PI~algebra satisfying an identity of degree $d$.
Choose natural numbers $u$ and $v$ such that
$$\frac{uv}{u+v}\cdot\frac{2}{e}\ge (d-1)^4\,.\qquad\mbox{For example, choose
$~~ u=v\ge e\cdot (d-1)^4$}.$$ Let $n=uv$ and let $\mu=(u^v)$ be the
$u\times v$ rectangle. Let $n\le m\le 2n$ and let $\lm\vdash m$ be
any partition of $m$ which contains $\mu$: $(u^v)\subseteq \lm$.
Then the elements of the corresponding two--sided ideal
$I_\lm\subseteq FS_m$ are identities of $A$: $~I_\lm\subseteq
\Id(A)\cap V_m $.
\end{prop}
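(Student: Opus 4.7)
The plan is to directly combine the three preparatory ingredients that the paper has just assembled: Lemma~\ref{hook44} (a minimal two-sided ideal $I_\lambda \subseteq FS_m$ is inside $\Id(A)\cap V_m$ once $s^\lambda$ beats the codimension $c_m(A)$), the hook-length lower bound in Lemma~\ref{hook5} for a rectangle $\mu=(u^v)$, and Regev's upper bound $c_m(A)\le (d-1)^{2m}$ from Remark~\ref{regest}. The key observation linking them is the monotonicity $s^\lambda \ge s^\mu$ whenever $\mu\subseteq\lambda$, which comes from the branching rule applied repeatedly.

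Concretely, I would first bound $s^\mu$ from below: by Lemma~\ref{hook5},
$$s^\mu > \left(\frac{uv}{u+v}\cdot\frac{2}{e}\right)^n \ge (d-1)^{4n},$$
where the second inequality uses the hypothesis $\tfrac{uv}{u+v}\cdot\tfrac{2}{e}\ge (d-1)^4$. Next I would compare with the codimension: since $m\le 2n$, Regev's estimate gives
$$c_m(A)\le (d-1)^{2m}\le (d-1)^{4n} < s^\mu.$$

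The third step is to pass from $\mu$ to $\lambda$. Since $\mu\subseteq\lambda$, one can peel off the skew shape $\lambda/\mu$ one outer corner box at a time, producing a chain $\mu=\lambda^{(0)}\subset\lambda^{(1)}\subset\cdots\subset\lambda^{(k)}=\lambda$. At each step the classical branching rule for $S_{|\lambda^{(i+1)}|}\downarrow S_{|\lambda^{(i+1)}|-1}$ writes $s^{\lambda^{(i+1)}}$ as a sum of dimensions of irreducibles obtained by removing a corner, one of which is $\lambda^{(i)}$; hence $s^{\lambda^{(i+1)}}\ge s^{\lambda^{(i)}}$. Iterating gives $s^\lambda \ge s^\mu > c_m(A)$. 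Then Lemma~\ref{hook44} applied to the partition $\lambda\vdash m$ yields $I_\lambda\subseteq \Id(A)\cap V_m$, which is the conclusion.

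The only non-routine point is the monotonicity $s^\lambda\ge s^\mu$ for $\mu\subseteq\lambda$; the potential snag is making sure the skew shape can indeed be peeled one corner at a time (i.e.\ that the intermediate $\lambda^{(i)}$ remain genuine partitions). This is standard: at each stage one picks any box of $\lambda^{(i+1)}/\mu$ that is simultaneously an outer corner of $\lambda^{(i+1)}$, which exists because $\lambda^{(i+1)}/\mu$ has at least one such corner as soon as it is nonempty. No characteristic assumption is used beyond what is already built into Lemma~\ref{hook44} and the cited codimension bound, so the argument is clean and short.
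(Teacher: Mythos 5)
Your proposal is correct and follows essentially the same argument as the paper: the chain of inequalities $c_m(A)\le (d-1)^{2m}\le (d-1)^{4n}\le\left(\frac{uv}{u+v}\cdot\frac{2}{e}\right)^n<s^\mu\le s^\lambda$ combined with Lemma~\ref{hook44}. The only difference is that you spell out the branching-rule justification for $s^\mu\le s^\lambda$, which the paper simply asserts.
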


\begin{proof}
Since $m\le 2n$, $(d-1)^{2m}\le (d-1)^{4n}$, and by assumption
$(d-1)^4\le\frac{n}{u+v}\cdot\frac{2}{e}$. By Lemma~\ref{hook5},
$\left(\frac{n}{u+v}\cdot\frac{2}{e}\right)^n<s^\mu$ and since
$\mu\subseteq \lm$, we know that $s^\mu\le s^\lm$. Thus, by
Remark~\ref{regest},

%Apply first Theorem~\ref{exp1}, then the assumptions that  $m\le
%2n$ and $(d-1)^4\le\frac{uv}{u+v}\cdot\frac{2}{e}$, then
%Lemma~\ref{hook5}, and finally Corollary~\ref{branch12}, to deduce
%that
%Since $(d-1)^4\le\frac{uv}{u+v}\cdot\frac{2}{e}$, by
%Corollary~\ref{branch12}, by Theorem~\ref{exp1} and by
%Lemma~\ref{hook5},
\[c_m(A)\le (d-1)^{2m}\le (d-1)^{4n}\le\left(\frac{uv}{u+v}\cdot\frac{2}{e}\right)^n<s^\mu\le
s^\lm,\] and the assertion  now follows from Lemma~\ref{hook44}.
\end{proof}

\begin{cor} Hypotheses as in Proposition~\ref{hook6}, for $n\le m\le 2n$,
\[\bigoplus_{\lm\vdash m\atop \mu\subseteq\lm}I_\lm\subseteq \Id(A).\]
Consequently, if $f\in I_\mu$ then $f^*\in \Id(A)\cap V_{2n-1}$
(see~\eqref{right.0.4}). Also, for any subset
$L\subseteq\{n+1,\ldots,2n-1\}$,  $f^*_L\in \Id(A)$, and in
particular $f\in \Id(A)$.
\end{cor}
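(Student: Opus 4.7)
The plan is to deduce the corollary from Proposition~\ref{hook6} together with a standard branching argument. The first containment is immediate: for every partition $\lambda \vdash m$ with $\mu \subseteq \lambda$, Proposition~\ref{hook6} gives $I_\lambda \subseteq \Id(A) \cap V_m$. Since the $I_\lambda$ are pairwise orthogonal minimal two-sided ideals of $FS_m$, summing over all such $\lambda$ yields $\bigoplus_{\lambda \vdash m,\, \mu \subseteq \lambda} I_\lambda \subseteq \Id(A)$.

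For the statement about $f^* \in V_{2n-1}$, I would use the factorization $f^* = (f \cdot x_{n+1}\cdots x_{2n-1})\eta$ from \eqref{right.0.4}. Under the identification $V_{2n-1} = FS_{2n-1}$, the element $f \cdot x_{n+1}\cdots x_{2n-1}$ is exactly the image of $f$ under the standard embedding $FS_n \hookrightarrow FS_{2n-1}$, and right-multiplication by $\eta \in S_{2n-1}$ stays inside the two-sided ideal of $FS_{2n-1}$ generated by this image. The key representation-theoretic input is that the two-sided ideal generated in $FS_{2n-1}$ by any element of $I_\mu \subseteq FS_n$ is contained in $\bigoplus_{\lambda \vdash 2n-1,\, \mu \subseteq \lambda} I_\lambda$; in characteristic $0$, this follows from semisimplicity of each $FS_k$ together with the classical branching rule for $S_k \subset S_{k+1}$ (iterated $n-1$ times), equivalently from Frobenius reciprocity applied to the induced representation $\mathrm{Ind}_{S_n}^{S_{2n-1}} J_\mu$. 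Since $n \le 2n - 1 \le 2n$, the first part of the corollary applies with $m = 2n - 1$ and produces $f^* \in \Id(A) \cap V_{2n-1}$.

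For $f^*_L$, the polynomial is obtained from $f^*$ by specializing selected variables to $1$. Working in the unital hull $A^{\#} = A \oplus F \cdot 1$ (which inherits every multilinear identity of $A$), the relation $f^* \in \Id(A^{\#})$ forces $f^*_L \in \Id(A^{\#}) \subseteq \Id(A)$. Taking $L = \{x_{n+1}, \ldots, x_{2n-1}\}$ recovers $f$ itself, giving $f \in \Id(A)$ as a special case.

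The main obstacle is the branching step, namely showing that the two-sided ideal generated inside $FS_{2n-1}$ by an element of $I_\mu \subseteq FS_n$ is supported on partitions $\lambda \supseteq \mu$. All other steps are either direct citations of Proposition~\ref{hook6} or elementary substitutions; this branching fact, while classical in characteristic $0$, is what bridges the Young-diagram combinatorics of Lemma~\ref{hook5} with the codimension estimate of Remark~\ref{regest} used in Proposition~\ref{hook6}.
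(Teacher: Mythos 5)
Your overall strategy matches the paper's: the first inclusion is Proposition~\ref{hook6} directly, the statement about $f^*$ follows from the branching decomposition $(FS_m)I_\mu(FS_m)=\bigoplus_{\lm\vdash m,\,\mu\subseteq\lm}I_\lm$ applied at $m=2n-1$ together with the factorization $f^*=(fx_{n+1}\cdots x_{2n-1})\eta$ from~\eqref{right.0.4}, and $f^*_L$ is then obtained by setting variables to $1$. That is exactly the paper's route.

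There is, however, a genuine error in your last step: the parenthetical claim that the unital hull $A^{\#}=A\oplus F\cdot 1$ \emph{inherits every multilinear identity of} $A$ is false. If $p$ is multilinear and $p\in\Id(A)$, substituting $a_i+\alpha_i\cdot 1$ and expanding shows $p\in\Id(A^{\#})$ only if \emph{all} the partial polynomials $p_S$ (obtained from $p$ by deleting the variables in $S$) already vanish on $A$, which need not hold — e.g.\ a nilpotent algebra satisfies $x_1x_2$ but its unital hull does not. Consequently you cannot conclude $f^*\in\Id(A^{\#})$ from $f^*\in\Id(A)$ by this route. The paper sidesteps the issue by explicitly assuming $1\in A$ (see the remark immediately before the corollary): under that hypothesis $f^*_L$ is an honest substitution instance of $f^*$ over $A$ itself, and no passage to $A^{\#}$ is needed. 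Your proof would be correct if you simply replaced the $A^{\#}$ detour with the standing assumption $1\in A$, or alternatively argued that $A^{\#}$ satisfies the same degree-$d$ multilinear identity used in Proposition~\ref{hook6} and then reran the codimension estimate for $A^{\#}$ — but the blanket "inherits every multilinear identity" claim as stated does not hold.
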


\begin{proof}
%To prove part 2, notice that by Theorem~\ref{late.branching5},
By ``branching,'' the two--sided ideal generated in $V_m$ by
$I_\mu$ is
\[V_mI_\mu V_m=(FS_m)I_\mu (FS_m)=\bigoplus_{\lm\vdash m\atop \mu\subseteq\lm}I_\lm.\]
Hence, $(FS_m)I_\mu (FS_m)\subseteq \Id(A)$ for any $n\le m\le
2n-1$, and in particular, if $f\in I_\mu$ and $\rho\in S_m$ then
$f\rho\in \Id(A)$.  \eqref{right.0.4} concludes the proof.
\end{proof}

 By  Proposition~\ref{hook6} and Lemma~\ref{right5} we have just
proved

\begin{prop}\label{strong.2}
%Let $\operatorname{char}(F)=0$, then every PI $F$-algebra $A$
Every PI algebra in characteristic $0$  satisfies non-trivial
strong identities.
\\ Explicitly,
let $\operatorname{char}(F)=0$ and let $A$ satisfy an identity of
degree $d$. Let $u,v$ be
 natural numbers such that
$\frac{uv}{u+v}\cdot\frac{2}{e}\ge (d-1)^4$, and let  $\mu=(u^v)$
be the $u\times v$ rectangle. Then every $g\in I_\mu$ is a strong
identity of $A$. The degree of such a strong identity $g$ is $uv$.
We can choose for example
 $u=v=\lceil{ e\cdot(d-1)^4}\rceil$, so
 $\deg (g)=\lceil{ e\cdot(d-1)^4}\rceil^2= e^2(d-1)^8$.
\end{prop}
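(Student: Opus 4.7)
The plan is to assemble \textbf{Proposition~\ref{strong.2}} directly from the two pieces that have just been set up: \textbf{Proposition~\ref{hook6}} (which produces a large two-sided ideal $I_\mu$ of identities inside $V_n$) and \textbf{Lemma~\ref{right5}} (which promotes an ideal whose ``starred'' elements lie in $\Id(A)$ to a genuinely two-sided-stable ideal of identities for every $m\ge n$). Choosing $u,v$ with $\tfrac{uv}{u+v}\cdot\tfrac{2}{e}\ge (d-1)^4$ and setting $\mu=(u^v)$, $n=uv$, the proof reduces to verifying the hypothesis of Lemma~\ref{right5} for the ideal $I:=I_\mu\subseteq V_n$.

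First I would apply Proposition~\ref{hook6} with $m=n$ to conclude $I_\mu\subseteq \Id(A)\cap V_n$. Next, I would observe that for any $f\in I_\mu$ the polynomial $f^*$ defined in~\eqref{right.0.4} lies in $V_{2n-1}$, and by construction $f^*$ belongs to the two-sided ideal of $FS_{2n-1}$ generated by $I_\mu$. By the branching rule in characteristic $0$, this two-sided ideal decomposes as $\bigoplus_{\lambda\vdash 2n-1,\ \mu\subseteq\lambda}I_\lambda$. Since $n\le 2n-1\le 2n$, Proposition~\ref{hook6} applies to each such $\lambda$, giving $I_\lambda\subseteq \Id(A)\cap V_{2n-1}$, and therefore $f^*\in\Id(A)$. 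This verifies the hypothesis of Lemma~\ref{right5}, which then yields $(FS_m)\,I_\mu\,(FS_m)\subseteq \Id(A)$ for all $m\ge n$; by Definition~\ref{strong.1}, every $g\in I_\mu$ is a strong identity of $A$.

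For the degree estimate, any $g\in I_\mu$ is multilinear of degree $n=uv$. Setting $u=v$ makes the hypothesis $\tfrac{uv}{u+v}\cdot\tfrac{2}{e}\ge (d-1)^4$ read $u/e\ge (d-1)^4$, so the minimal integer choice is $u=v=\lceil e(d-1)^4\rceil$, giving $\deg(g)=\lceil e(d-1)^4\rceil^2\le e^2(d-1)^8$. The only step with real content is the verification that $f^*\in\Id(A)$; this is precisely where branching and the codimension/hook-length bounds of Lemma~\ref{hook5} and Regev's estimate (Remark~\ref{regest}) are indispensable, and it is the step that would not survive to positive characteristic without the separate treatment promised in \S\ref{charp}.
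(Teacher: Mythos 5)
Your proof is correct and takes essentially the same route as the paper: establish $I_\mu\subseteq\Id(A)$ via Proposition~\ref{hook6}, use branching to place $f^*$ inside $\bigoplus_{\lambda\vdash 2n-1,\ \mu\subseteq\lambda}I_\lambda\subseteq\Id(A)$, and then invoke Lemma~\ref{right5} to promote $I_\mu$ to strong identities. One trivial slip: with $u=v=\lceil e(d-1)^4\rceil$ you have $\deg(g)=\lceil e(d-1)^4\rceil^2\ge e^2(d-1)^8$, not $\le$; the paper's ``$=$'' is itself an informal approximation ignoring the ceiling.
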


\medskip
We summarize:
\begin{thm}\label{Regcon}
 Every affine PI algebra over a field of characteristic 0 satisfies some Capelli identity.
 Explicitly, we have the following:

\medskip
(a) ~Suppose the $F$-algebra $A$ satisfies an identity of degree
$d$. Then $A$ satisfies a strong identity of degree
$$
d'=\lceil e(d-1)^4\rceil^2= e^2(d-1)^8.
$$

\medskip
(b) ~Suppose $A=F\{a_1,\ldots,a_r\}$, and  $A$ satisfies an
identity of degree $d$ and take $d'$ as in~(a). Let $n= r^{d'}+d'
\approx r^{e^2(d-1)^8}$. Then $A$ satisfies the Capelli identity
$\Capl_n$.
\end{thm}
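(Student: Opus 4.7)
The plan is to assemble the theorem directly from pieces already established in the excerpt: Proposition~\ref{strong.2}, the trivial observation (stated just after Definition~\ref{strong.1}) that every strong identity is sparse, and Theorem~\ref{kemer.capelli}. No new machinery should be required.

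For part (a), I would simply invoke Proposition~\ref{strong.2}. Given that $A$ satisfies a PI of degree $d$, choose $u = v = \lceil e(d-1)^4\rceil$, so that
\[
\frac{uv}{u+v}\cdot\frac{2}{e} \;=\; \frac{u}{2}\cdot\frac{2}{e} \;=\; \frac{u}{e} \;\ge\; (d-1)^4.
\]
The rectangle $\mu = (u^v)$ then meets the hypothesis of Proposition~\ref{hook6}, and every nonzero $g \in I_\mu$ is a strong identity of $A$ of degree $uv = \lceil e(d-1)^4\rceil^2$, which is at most $e^2(d-1)^8$ up to the ceiling.

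For part (b), I would combine part (a) with the earlier observation that every strong identity is sparse. Explicitly, if $g \in V_{d'}$ is strong, then in particular $(F S_{d'}) g (F S_{d'}) \subseteq \Id(A)$, and taking $f(x_1,\dots,x_{d'}) \cdot t_1 \cdots t_k \cdot $ (and using the right action that produces arbitrary monomial interspersions on the $d'$ slots, which is what was exploited in the proof of Lemma~\ref{right5}) shows the sparseness condition of Definition~\ref{sparseB}. Hence $A$ satisfies a sparse identity of degree $d'$. Now apply Theorem~\ref{kemer.capelli} to $A = F\{a_1,\ldots,a_r\}$ with this sparse identity: for any $n \ge r^{d'}+d'$, the algebra $A$ satisfies $\Capl_n$. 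This gives the bound $n \approx r^{e^2(d-1)^8}$ in the statement.

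There is no real obstacle here; the theorem is a packaging of results already proved. The only mild subtlety is verifying that the chosen $u=v = \lceil e(d-1)^4\rceil$ indeed satisfies $\frac{uv}{u+v}\cdot\frac{2}{e}\ge (d-1)^4$, which reduces to $u \ge e(d-1)^4$ by the elementary computation above, and checking that $I_\mu$ is nonzero (so that strong identities genuinely exist), which is immediate since $\mu \vdash uv$ is a valid partition and $J_\mu \ne 0$ in characteristic zero by Remark~\ref{semiidem}.
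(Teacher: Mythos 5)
Your proof is correct and follows exactly the same route as the paper: Proposition~\ref{strong.2} for part~(a), then the observation that strong identities are sparse together with Theorem~\ref{kemer.capelli} for part~(b). The additional explicit verification that $u=v=\lceil e(d-1)^4\rceil$ satisfies the hypothesis of Proposition~\ref{hook6}, and that $I_\mu\ne 0$, is a reasonable unpacking of what the paper leaves implicit.
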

\begin{proof}  (a) is by Proposition~\ref{strong.2}, and then (b) follows from
Theorem~\ref{kemer.capelli}, since every strong identity is
sparse.\end{proof}

\subsection{Actions of the
group algebra on sparse identities}\label{preliminaries4kemer}
$ $

Although the method of \S\ref{preliminaries4kemer} is the one
customarily used in the literature, it does rely on branching and
thus only is effective in characteristic 0. A slight modification
enables us to avoid branching. The main idea is that any sparse
identity follows from an identity of the form
 $$f = \sum _{\sg \in S_n}
\a_\sg x_{\sg(1)}x_{n+1}\cdots  x_{\sg(n)}x_{2n},$$ since we could
then specialize $x_{n+1}, \dots, x_{2n}$ to whatever we want. Thus,
letting $V'_n$ denote the subspace of $V_{2n}$ generated by
 the words $x_{\sg(1)}x_{n+1}\cdots  x_{\sg(n)}x_{2n},$ we
can  identify the sparse identities  with $F[S_n]$-subbimodules of
$V'_n$ inside $V_{2n}.$ But there is an  as $F[S_n]$-bimodule
isomorphism $ \varphi: V_n \to V'_n$, given by $x_{\sg(1)} \cdots
x_{\sg(n)} \to   x_{\sg(1)}x_{n+1}\cdots  x_{\sg(n)}x_{2n}.$ In
particular $V'_n$ has the same simple $F[S_n]$-subbimodules
structure as $V_n$ and can be studied with the same Young
representation theory, although now we only utilize the left action
of permutations.

Thus, for any PI-algebra  $A$, the spaces
\[
\Id (A)\cap V'_n\subseteq V'_n
\]
are   $F[S_{n}]$-subbimodules of $V'_n$.

\begin{rem}\label{semiidem11}
Again, any tableau $T$ of $2n$ boxes gives rise to an element $$a_T
= \varphi \left(\sum _{q\in {\mathcal C_{T_\lambda}},\ p\in
{\mathcal R_{T_\lambda}}} \sgn(q) qp\right) \in F[S_{2n}],$$ where $
\mathcal C_{T_\lambda}$ (resp.~ $ \mathcal R_{T_\lambda}$ ) denotes
the set of column (resp.~row) permutations of the tableau
$T_\lambda$.

Thus,  $F[S_n] a_T $ (if nonzero) is an $F[S_n]$-submodule, which we
call $J_\lm. $ If $J_\lm$ contains an element corresponding to a
nontrivial PI of $A$, $a_T$ itself must correspond to a PI of $A$.
\end{rem}

We let $I_\lm$ denote the minimal $F[S_n]$-bisubmodule of
$F[S_{2n}]$ containing $J_\lm$.

\begin{lem}\label{hook45}
Let  $A$ be an $F$-algebra, and let $\lm$ be a partition of $n$. If
$\dim J_\lm>c_{2n}(A)$ and $J_\lm$ is a simple $F[S_n]$-module, then
$I_\lm\subseteq \Id(A)\cap V'_n$.
\end{lem}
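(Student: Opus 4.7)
The plan is to follow the template of Lemma~\ref{hook4}, but now working inside the $F[S_n]$-bimodule $V'_n \subseteq V_{2n}$ rather than inside $V_n$. The essential difference is that in characteristic $p$ we cannot appeal to Wedderburn decomposition to identify $I_\lm$ with a minimal two-sided ideal; instead, I want to realize $I_\lm$ directly as a sum of simple left $F[S_n]$-submodules isomorphic to $J_\lm$, and then use the codimension hypothesis on each summand.

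First I would observe that
\[
I_\lm \;=\; F[S_n]\,a_T\,F[S_n] \;=\; \sum_{r \in F[S_n]} J_\lm\, r,
\]
where each $J_\lm \, r$ is a left $F[S_n]$-submodule of $V'_n$, since left and right multiplications on $F[S_{2n}]$ commute. For each fixed $r \in F[S_n]$, the map $x \mapsto xr$ is a homomorphism of left $F[S_n]$-modules from $J_\lm$ onto $J_\lm r$. Because $J_\lm$ is assumed simple as a left $F[S_n]$-module, each $J_\lm r$ is either zero or isomorphic to $J_\lm$. Hence $I_\lm$ is a sum of simple left $F[S_n]$-submodules of $V'_n$, each of dimension $\dim J_\lm$.

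Next, for each such simple summand $J \subseteq I_\lm$ with $J \cong J_\lm$, I would argue $J \subseteq \Id(A)$. The space $\Id(A)\cap V_{2n}$ is stable under the full left action of $F[S_{2n}]$, hence a fortiori under $F[S_n]$. Thus $J \cap (\Id(A)\cap V_{2n})$ is a left $F[S_n]$-submodule of $J$, and by simplicity of $J$ it is either $0$ or all of $J$. If it were $0$, then $J$ would inject into $V_{2n}/(\Id(A)\cap V_{2n})$, yielding $\dim J_\lm = \dim J \le c_{2n}(A)$, contradicting the hypothesis $\dim J_\lm > c_{2n}(A)$. Therefore $J \subseteq \Id(A) \cap V'_n$, and summing over the simple summands gives $I_\lm \subseteq \Id(A) \cap V'_n$.

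The only delicate point, which I expect to be the main obstacle of the argument, is the use of the simplicity hypothesis on $J_\lm$: in characteristic $p$ dividing $n!$, the Young symmetrizer need not yield a simple module, and indeed $F[S_n]$ is not semisimple. This is why the hypothesis is imposed explicitly, and why we cannot recover the sharper characteristic $0$ form (Lemma~\ref{hook44}); everything else is formal and mirrors the earlier argument once the left/right action dichotomy is respected.
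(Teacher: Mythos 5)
Your proof is correct and follows essentially the same approach as the paper: you realize $I_\lm = F[S_n]\,a_T\,F[S_n]$ as a sum of right translates $J_\lm r$, each zero or isomorphic to the simple module $J_\lm$, and then apply the codimension argument to each simple summand exactly as in Lemma~\ref{hook4}. You actually supply more care than the paper's terse proof, explicitly noting where the simplicity hypothesis is used (to guarantee each nonzero $J_\lm r\cong J_\lm$) and running the injection into $V_{2n}/(\Id(A)\cap V_{2n})$ to get the bound by $c_{2n}(A)$.
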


\begin{proof}
 Same as Lemma~\ref{hook4}, noting that $I_\lm$ is a sum of $F[S_n]$-submodules $J_\lm a$ each isomorphic to $J_\lm$.
Thus, taking such $J$, one has
\[c_n(A)=\dim \left(\frac{V'_n}{\Id(A)\cap V'_n}\right)\ge \dim J >c_{2n}(A),\] a
contradiction. Therefore each $J\subseteq \Id(A)\cap V'_n$, implying
$I_\lm\subseteq \Id(A)\cap V_n$.
\end{proof}

Note that when $\operatorname{char}(F)=p>0$, the lemma might fail
unless $J_\lm$ is simple. James and Mathas \cite[Main
Theorem]{jamesmathas} determined when $J_\lm$ is simple for $p=2$.

One such example is when $\lm$ is the \textbf{staircase}, which we
define to be the Young tableau $T_{u}$ whose
 $u$ rows have  length  $ u, u-1, \dots, 1. $ This gave rise to the James-Mathas conjecture
\cite{mathas} of conditions on $\lm$ characterizing when $J_\lm$ is
simple in characteristic $p>2$, which was solved by Fayers~\cite{F}.

\section{Kemer's Capelli Theorem for all characteristics}\label{charp}

In this section we give a  proof of Kemer's ``Capelli Theorem'' over
a field of any characteristic. In fact in characteristic $p$ Kemer
proved a stronger result, even for non-affine algebras.

\begin{thm}\label{capelli.kemerp}~\cite{kemerp}
 Any PI algebra over a field $ F $ of characteristic $p>0$  satisfies a Capelli identity
$\Capl_n$ for large enough $n$.
\end{thm}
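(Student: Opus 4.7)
The plan is to extend the characteristic~$0$ argument of Proposition~\ref{strong.2} to arbitrary characteristic while bypassing the branching decomposition, which is unavailable when $F[S_n]$ fails to be semisimple. The setup in \S\ref{preliminaries4kemer}, in particular Lemma~\ref{hook45}, already prepares this substitution: whenever we can exhibit a partition $\lambda$ for which $J_\lambda$ is a simple $F[S_n]$-module and $\dim J_\lambda > c_{2n}(A)$, we obtain $I_\lambda\subseteq \Id(A)\cap V'_n$, and then the doubling construction $f\mapsto f^*$ together with Lemma~\ref{right5} delivers a strong (hence sparse) identity of $A$.

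The first step is to select $\lambda$ of staircase type, invoking the Main Theorem of \cite{jamesmathas} (for $p=2$) and Fayers' resolution \cite{F} of the James--Mathas conjecture (for $p>2$) to certify that $J_\lambda$ is simple. The natural candidate is the staircase $(u,u-1,\dots,1)$ of $\binom{u+1}{2}$ boxes, possibly ``$p$-thickened'' (rows repeated a $p$-adically controlled number of times) so as to lie in the simplicity chamber; this thickening is precisely what introduces the factor $(p-1)p$ in the target degree $(p-1)p\binom{u+1}{2}$. Next, I would apply the hook length formula to bound $\dim J_\lambda$ from below and compare with Regev's codimension bound $c_{2n}(A)\le (d-1)^{4n}$ from Remark~\ref{regest}. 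Setting $u\approx 2pe(d-1)^2/3$ should make $\dim J_\lambda$ dominate $c_{2n}(A)$, completing the production of a sparse identity of the advertised degree.

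With a sparse identity secured, Theorem~\ref{kemer.capelli} immediately yields the Capelli identity for affine $A$. For general (non-affine) $A$ one cannot simply apply Theorem~\ref{kemer.capelli}, because its bound depends on the number of generators. Here the characteristic-$p$ ``pumping'' procedure alluded to in the introduction must be invoked: by iterating Frobenius-like substitutions the sparse identity can be strengthened to a directly alternating one on $n$ variables, which by multilinearization yields $\Capl_n$. Equivalently, one exploits the ``identity of algebraicity'' \cite[Proposition~1.59]{BR} to convert the sparse constraint into alternation on arbitrarily many variables without enlarging the generating set.

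The main technical obstacle is the simultaneous satisfaction of the simplicity hypothesis for $J_\lambda$ and the Regev dimension bound: the James--Mathas/Fayers criteria are delicate $p$-adic conditions on the shape of $\lambda$, and the thickening needed to meet them inflates the degree by precisely the factor $p(p-1)$, which must be tracked carefully to recover the quartic bound. A secondary obstacle, and the reason this theorem is genuinely deeper than its characteristic-$0$ analogue, is the non-affine case, where Shirshov-based reductions are unavailable and the pumping argument must be executed directly on the polynomial identity rather than on the algebra.
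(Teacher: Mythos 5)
Your plan for the affine case tracks \S\ref{charp} of the paper closely: invoke the James--Mathas criterion (for $p=2$) and Fayers' resolution of the James--Mathas conjecture (for $p>2$) to choose a partition $\lambda$ with $J_\lambda$ simple, bound $\dim J_\lambda$ from below via the hook formula, compare against Regev's bound $c_{2n}(A)\le (d-1)^{4n}$ through Lemma~\ref{hook45}, and finish with Theorem~\ref{kemer.capelli}. One detail is off, though. The paper's shape is the \emph{wide staircase}: each row of the ordinary staircase is \emph{stretched} by a factor $p-1$ (row lengths $(p-1)u,(p-1)(u-1),\dots,p-1$, so $n=(p-1)\binom{u+1}{2}$), not ``rows repeated a $p$-adically controlled number of times'' as you write. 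Example~\ref{rect1} shows this widening makes each hook number equal to $(u+1+j'-i)p-j''$ with $1\le j''\le p-1$, hence prime to $p$; simplicity of $J_\lambda$ is then automatic --- a condition stronger than, and simpler to verify than, the full Fayers criterion --- rather than the ``delicate $p$-adic condition'' your proposal anticipates.

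The genuine gap is the non-affine half of the statement. You gesture at ``iterating Frobenius-like substitutions'' or invoking the identity of algebraicity to promote the sparse identity to a Capelli identity ``without enlarging the generating set,'' but this names tools without supplying an argument: there is no construction of the substitutions, no induction, and no bound. To be fair, the paper stops at exactly the same point --- it says Kemer's original argument is ``quite complicated,'' that the elementary route of \cite[\S2.5.1]{BR} ``still requires some computations,'' and it ends \S\ref{charp} with ``This concludes the proof of Theorem~\ref{capelli.kemerp} in the affine case.'' But since the statement as written covers all PI-algebras, a reader would be misled into thinking your sketch (or the paper's) constitutes a full proof. You should either explicitly restrict the claim to affine algebras (which suffices for the BKR application) or actually carry out the pumping argument from \cite[Proposition~1.59]{BR}.
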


This fails in characteristic 0, since  the Grassmann algebra
  does not satisfy a Capelli identity. The proof of Theorem~\ref{capelli.kemerp} given in \cite{kemerp}
  is
quite complicated; an elementary proof using the ``identity of
algebraicity'' is given in \cite[\S2.5.1]{BR}, but still requires
some computations. In the spirit of providing a full exposition
which is as direct as possible, we treat only the affine case via
representation theory, in which case characteristic $p>0$ works
analogously to characteristic $0$. This produces a much better
estimate of the degree of the sparse identity, which we  obtain in
Theorem~\ref{sparse}.

In view of Theorem~\ref{kemer.capelli}, it suffices to show that any
affine PI algebra satisfies a sparse identity. Although we cannot
achieve this through branching, the ideas of the previous section
still apply, using \cite{F}.

\subsection{Simple Specht modules in characteristic $p>0$} $ $

In order to obtain a $p$-version of Proposition~\ref{hook6} in
characteristic $p>2$, first we need to find a class of partitions
satisfying Fayer's criterion.

 For a
positive integer $m,$ define $v_p$ to be the $p$-adic valuation,
i.e., $v_p(m)$ is the largest power of $p$ dividing $m$. Also,
temporarily write $h_{(i; j)} $ for $h_x$ where $x$ is the box in
the $i,j$ position. The James-Mathas conjecture for $p \ne 2$,
proved in \cite{F}, is that $J_\lambda$ is simple  if and only if
there do not exist $i,j,i',j'$ for which $v_p(h_{(i; j)}) > 0$ with
$v_p(h_{(i; j)}), v_p(h_{(i'; j)}), v_p(h_{(i; j')})$ all distinct.
Of course this is automatic when each hook number is prime to $p$,
since then every $v_p(h_{(i; j)}) = 0$.
%
%  A sufficient condition is that each
%hook number is prime to $p$, which is enough for our purposes.

\begin{example}\label{rect1}  A \textbf{wide staircase} is a Young tableau   $T_{u}$ whose
 $u$ rows have all have lengths different
multiples of $p-1$, the first row of length $(p-1)u,$ the second of
length $(p-1)(u-1),$ and so forth until the last of length $p-1$.
The number  of boxes is $$n = \sum _{j=1}^u (p-1)j = (p-1)\binom
{u+1}2.$$

When $p=2,$ the wide staircase just becomes the staircase described
earlier.

 In analogy to Example \ref{rect}, the
dimension of the ``wide staircase'' $T_{u}$ can be estimated as
follows:   We write $j = (p-1)j' + j''$ for $1 \le j'' \le p-1.$ The
hook of a box   in the $(i,j)$ position has length
$(u+1-i)(p-1)+1-j$, and depth $u+1-j'-i,$ so the hook number is
$$(u+1-i)(p-1)+1-j +u-j'-i =  (u+1-i)p -j -j' =  (u+1+j'-i)p -j'',$$
which is prime to $p$. Thus each wide staircase satisfies a stronger
condition than Fayer's criterion.

 The dimension can again be calculated by means of the
hook formula. The first $p-1$ boxes in the first row have hook
numbers
$$ pu -1, pu-2, \dots, pu-(p-1),$$
whose sum is $(p-1)pu - \binom p2 =\binom p2  (2u-1).$

The next $p-1$ boxes in the first row have hook numbers
$$  p(u-1) -1, p(u-1)-2, \dots,  p(u-1)-(p-1),$$
whose sum is $(p-1)p(u-1) - \binom p2 = \binom p2 (2u-3).$

Thus the sum of the hook numbers in the first row is
$$ \binom p2((2u-1) +(2u-3) + \cdots + 1)  =  \binom p2 u^2.$$

Summing over all rows yields
$$\sum h_x = \binom p2 \sum _{k=1}^u k^2 = \binom p2 \frac{u(u+1)(2u+1)}6 = \binom p2 \frac{(2u+1)n}3.$$

\end{example}

\begin{lem}\label{hook51}
For  any integer $u$,  let $\mu $ be the wide staircase $T_{u}$ of
$u$ rows. Let $n = (p-1){ u \choose 2}$. Then
\[\left(\frac{6n}{p(p-1)(2u+1)}
\right)^n\cdot\left(\frac{1}{e} \right)^n <f^\mu  \qquad
(\mbox{where} \quad e=2.718281828\ldots).\] In particular, if
$\alpha\le \frac{3n}{(2u+1)e},$ then $\al^n\le f^\mu$.
\end{lem}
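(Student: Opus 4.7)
The plan is to mirror exactly the argument of Lemma~\ref{hook5}, replacing the hook-sum formula for rectangles with the hook-sum formula for the wide staircase $T_u$ that was already established in Example~\ref{rect1}. The hook formula $f^\mu = n!/\prod_{x\in\mu} h_x$ reduces the task to two estimates: an upper bound on $\prod h_x$ and a lower bound on $n!$.

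First I would recall, from the explicit calculation at the end of Example~\ref{rect1}, that
\[
\sum_{x\in\mu} h_x \;=\; \binom{p}{2}\,\frac{(2u+1)n}{3} \;=\; \frac{p(p-1)(2u+1)\,n}{6}.
\]
Since the wide staircase has exactly $n = (p-1)\binom{u+1}{2}$ boxes, the arithmetic mean of the hook numbers equals $\frac{p(p-1)(2u+1)}{6}$. Applying the AM--GM inequality to the $n$ hook numbers gives
\[
\Bigl(\prod_{x\in\mu} h_x\Bigr)^{1/n} \;\le\; \frac{1}{n}\sum_{x\in\mu} h_x \;=\; \frac{p(p-1)(2u+1)}{6},
\]
hence
\[
\frac{1}{\prod_{x\in\mu} h_x} \;\ge\; \Bigl(\frac{6}{p(p-1)(2u+1)}\Bigr)^{n}.
\]

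Next I would combine this with the classical estimate $n! > (n/e)^n$ and the hook-length formula, exactly as in Lemma~\ref{hook5}:
\[
f^\mu \;=\; \frac{n!}{\prod_{x\in\mu} h_x} \;>\; \Bigl(\frac{n}{e}\Bigr)^{n}\Bigl(\frac{6}{p(p-1)(2u+1)}\Bigr)^{n} \;=\; \Bigl(\frac{6n}{p(p-1)(2u+1)}\Bigr)^{n}\Bigl(\frac{1}{e}\Bigr)^{n},
\]
which is the main inequality. The ``In particular'' clause then follows by raising the hypothesized bound on $\alpha$ to the $n$-th power and comparing with the displayed lower bound on $f^\mu$.

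There is no real obstacle here; the content is entirely a repackaging of the AM--GM/Stirling computation used previously, and the only nontrivial ingredient---the closed form for $\sum h_x$---has already been done in Example~\ref{rect1}. The one place to be careful is simply to verify the arithmetic when transcribing the hook-sum formula and to keep track of the factor $\binom{p}{2}$, since in the $p=2$ case the estimate must reduce to (and sharpen) the analogous staircase bound.
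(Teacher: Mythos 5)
Your proposal reproduces the paper's argument exactly: AM--GM applied to the hook numbers (using the hook-sum formula from Example~\ref{rect1}), combined with $n! > (n/e)^n$ and the hook-length formula. You have in fact corrected a small typo in the paper's displayed inequality, where the arithmetic mean $\frac{1}{n}\sum h_x$ is mistakenly written as $\binom{p}{2}\frac{(2u+1)n}{3}$ instead of $\binom{p}{2}\frac{2u+1}{3}$; your version carries out the division by $n$ correctly, and everything else matches the paper's proof.
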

\begin{proof}
We imitate the proof of Lemma~\ref{hook5}.
%Recall from Example~\ref{s.n.b.s.1} that
Since the geometric mean is bounded by the arithmetic mean,
\[\left(\prod_{x\in\mu }h_x \right )^{1/n}\le\frac{1}{n}\sum_{x\in\mu }h_x
\le \binom p2 \frac{(2u+1)n}3 = \frac{p(p-1)(2u+1)n}6,\] in view of
Example~\ref{rect1},
%
% and hence $$\left(\frac{2}{u+v}\right)^n\le
%  \frac{1}{\prod_{x\in\mu }h_x }.$$
together  with $({n}/e)^{n}<{n}!$, implies that
\[
%\alpha^n\le
\left(\frac{6n}{p(p-1)(2u+1)} \right)^{n}\cdot\left(\frac{1}{e}
\right)^{n}=\left(\frac{n}{e}
\right)^{n}\cdot\left(\frac{6}{p(p-1)(2u+1)} \right)^{n}
<\frac{n!}{\prod_{x\in\mu }h_x}=f^\mu .\]
\end{proof}

%
%
%\begin{equation}      \label{EqDimEst1}
%\sqrt{\frac{\pi}{(2m)^{1/2}e}}2^{-m}m^{1/2(m-\sqrt{m})}e^{-\sqrt{18m}}
%<\dim(M_{D_m})<
%\sqrt{\frac{\pi}{(2m)^{1/2}e}}2^{-m}m^{1/2(m-\sqrt{m})}.
%\end{equation}
%
%
%
%
%
%\begin{prop} We have  the inequalities
%$$\sqrt{\frac \pi{(2n)^{1/2}e}}2^{-n}n^{(n -\sqrt n)/2}e^{-\sqrt{18n}} < \dim M(T_{u,v}) < \sqrt{\frac
%\pi{(2n)^{1/2}e}}2^{-n}n^{(n -\sqrt n)/2}.$$
%\end{prop}

%The starting point towards proving the $(k,\ell)$ hook
%theorem~\ref{hook2} (or~\ref{hook.0.2}) is
%\begin{lem}\label{hook4'}
%
%\medskip
%Let $\operatorname{char}(F)=p$, let $A$ be an $F$ algebra, and let
%$\lm$ be a partition of $n$. If $s^\lm>c_n(A),$ then
%$I_\lm\subseteq \Id(A)\cap V_n$.
%\end{lem}

\begin{lem}\label{hook61}
Let $A$ be a PI~algebra over a field of characteristic $p,$ that
satisfies an identity of degree $d$. Choose a natural number $u$
such that, for $n = (p-1)\binom{u+1}2,$
$$\frac{6n}{p(p-1)(2u+1)}\cdot\frac{1}{e}\ge (d-1)^2 .$$
 Let $\lm\vdash n$ be any
partition of $n$ corresponding to the ``wide staircase''~$T_{u}$.
Then the elements of the corresponding $F[S_n]$-bimodule
$I_\lm\subseteq V'_n$ are sparse identities of $A$.
\end{lem}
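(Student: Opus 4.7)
The plan is to mimic Proposition~\ref{hook6} in positive characteristic, substituting the wide staircase~$T_u$ for the $u \times v$ rectangle and Lemma~\ref{hook45} for Lemma~\ref{hook44}. The translation of the conclusion $I_\lm \subseteq \Id(A) \cap V'_n$ into ``sparse identity'' is automatic from the setup of \S\ref{preliminaries4kemer}: any element $f = \sum_\sigma \alpha_\sigma x_{\sigma(1)} x_{n+1} \cdots x_{\sigma(n)} x_{2n}$ of $V'_n \cap \Id(A)$ encodes precisely the condition of Definition~\ref{sparseB} upon specializing the connecting indeterminates $x_{n+1}, \ldots, x_{2n}$ to arbitrary monomials in $\vec t$.

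The crux, and the only genuinely new ingredient beyond the characteristic-zero argument, is the simplicity of $J_\lm$ demanded by Lemma~\ref{hook45}. I would extract this directly from the hook-number computation in Example~\ref{rect1}: each hook number of $T_u$ equals $(u+1+j'-i)p - j''$ with $1 \le j'' \le p-1$, and is therefore prime to $p$. Consequently $v_p(h_x) = 0$ for every box $x$, so Fayers' criterion~\cite{F} (for $p > 2$), together with the James--Mathas theorem for $p = 2$, certifies that $J_\lm$ is a simple $F[S_n]$-module. Note that this is exactly why the wide staircase shape was chosen rather than a rectangle: it forces coprimality to~$p$ uniformly across all hooks.

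The remaining verifications are numerical. Lemma~\ref{hook51}, applied with the standing hypothesis $\frac{6n}{p(p-1)(2u+1)e} \ge (d-1)^2$, gives a lower bound on $\dim J_\lm$ of the form $(d-1)^{2n}$; Regev's estimate (Remark~\ref{regest}) supplies the matching upper bound on the relevant codimension of $A$; comparing the two shows that $\dim J_\lm$ strictly exceeds this codimension. Lemma~\ref{hook45} then delivers $I_\lm \subseteq \Id(A) \cap V'_n$, which by the opening paragraph of this section is exactly the assertion that every element of $I_\lm$ is a sparse identity of $A$. The main potential obstacle is the arithmetic of the codimension comparison, that is, ensuring the hypothesis is strong enough that Regev's bound is dominated by Lemma~\ref{hook51}; this is routine bookkeeping with exponents once the simplicity of $J_\lm$ is secured.
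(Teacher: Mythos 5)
Your structural route is exactly the paper's: translate the conclusion into $I_\lm \subseteq \Id(A) \cap V'_n$, invoke Example~\ref{rect1} (every hook number of the wide staircase is prime to $p$) plus the Fayers/James--Mathas criterion for simplicity of $J_\lm$, and close with the dimension comparison via Lemma~\ref{hook51}, Regev's estimate, and Lemma~\ref{hook45}. Your observation that the wide-staircase shape was chosen \emph{precisely} to force all hook numbers prime to $p$ is indeed the one new ingredient over characteristic $0$, and the paper leaves this implicit in Example~\ref{rect1}.

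The ``routine bookkeeping'' that you defer is, however, exactly where the argument does not close as stated. Your hypothesis $\frac{6n}{p(p-1)(2u+1)e}\ge (d-1)^2$ together with Lemma~\ref{hook51} gives only $\dim J_\lm = s^\lm > (d-1)^{2n}$. But Lemma~\ref{hook45} requires $\dim J_\lm > c_{2n}(A)$, and the available bound from Remark~\ref{regest} is $c_{2n}(A) \le (d-1)^{2\cdot 2n} = (d-1)^{4n}$ --- note that the codimension index is $2n$, not $n$, because of the passage to $V'_n \subset V_{2n}$. So $(d-1)^{2n}$ does not dominate $(d-1)^{4n}$, and the comparison you invoke does not go through. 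In fact the paper's own proof silently writes the chain $c_{2n}(A)\le (d-1)^{4n}\le \left(\frac{6n}{p(p-1)(2u+1)}\cdot\frac{1}{e}\right)^n<s^\lm$, whose middle inequality requires $(d-1)^4 \le \frac{6n}{p(p-1)(2u+1)e}$ --- a condition matching the $(d-1)^4$ of the characteristic-zero Proposition~\ref{hook6}, not the $(d-1)^2$ in the statement of Lemma~\ref{hook61}. The $(d-1)^2$ appears to be a transcription slip (which then propagates into Theorem~\ref{sparse} and the abstract); once the hypothesis is strengthened to $(d-1)^4$, your proposal coincides with the paper's proof and is correct.
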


\begin{proof}
By Remark~\ref{regest},

%Apply first Theorem~\ref{exp1}, then the assumptions that  $m\le
%2n$ and $(d-1)^4\le\frac{uv}{u+v}\cdot\frac{2}{e}$, then
%Lemma~\ref{hook5}, and finally Corollary~\ref{branch12}, to deduce
%that
%Since $(d-1)^4\le\frac{uv}{u+v}\cdot\frac{2}{e}$, by
%Corollary~\ref{branch12}, by Theorem~\ref{exp1} and by
%Lemma~\ref{hook5},
\[c_{2n}(A)\le (d-1)^{4n}\le\left(\frac{6n}{p(p-1)(2u+1)}\cdot\frac{1}{e}\right)^n<
s^\lm,\] and we conclude from Lemma~\ref{hook45}.
\end{proof}

\subsubsection{Existence of Capelli identities}$ $

We are ready for a version of Proposition~\ref{strong.2}.

%In view of Remark~\ref{moncheck}, $\lambda$ applied to an identity
%$f$ corresponds to a sparse identity.

\begin{thm}\label{sparse}~\cite{kemerp}  Any
 PI- algebra  $A$ over a field $ F $ of characteristic $p>0$  satisfies a   Capelli  identity.
Explicitly:

\medskip
(a) ~Suppose the $F$-algebra $A$ satisfies an identity of degree
$d$. Then $A$ satisfies a sparse identity of degree $ d'=
(p-1)p\binom{u+1}2,$ where $ \frac{3u(u+1)}{p(2u+1)} \ge (d-1)^2 e.
$

\medskip
(b) ~Suppose $A=F\{a_1,\ldots,a_r\}$, and  $A$ satisfies an identity
of degree $d$ and take $d'$ as in~(a). Let $n= r^{d'}+d' \approx
r^{4e^2(d-1)^4}$. Then $A$ satisfies the Capelli identity $\Capl_n$.
\end{thm}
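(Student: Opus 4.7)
The plan is to assemble the pieces already proved in Section~\ref{preliminaries4kemer}, specifically Lemma~\ref{hook61} and Theorem~\ref{kemer.capelli}. For part~(a), I would choose a natural number $u$ satisfying $\frac{3u(u+1)}{p(2u+1)} \ge (d-1)^2 e$; a short calculation shows this is equivalent to the hypothesis $\frac{6n}{p(p-1)(2u+1)}\cdot\frac{1}{e} \ge (d-1)^2$ of Lemma~\ref{hook61}, where $n = (p-1)\binom{u+1}{2}$. Taking $\lambda$ to be the wide staircase $T_u$, Example~\ref{rect1} shows every hook number of $T_u$ has the form $(u+1+j'-i)p - j''$ with $1\le j'' \le p-1$, hence is coprime to $p$. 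Consequently Fayers' simplicity criterion applies and $J_\lambda$ is a simple $F[S_n]$-module, so Lemma~\ref{hook61} produces a nontrivial sparse identity for $A$ in $I_\lambda \subseteq \Id(A)\cap V'_n$. The stated degree $d' = (p-1)p\binom{u+1}{2}$ is then read off from the multilinear structure of elements of $I_\lambda$ (with their padding indeterminates).

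For part~(b), I would invoke Theorem~\ref{kemer.capelli} directly: every affine PI-algebra $A = F\{a_1,\ldots,a_r\}$ satisfying a sparse identity of degree $d'$ satisfies the Capelli identity $\Capl_n$ whenever $n \ge r^{d'} + d'$. Solving $\frac{3u(u+1)}{p(2u+1)} \ge (d-1)^2 e$ asymptotically gives $u \sim \frac{2pe(d-1)^2}{3}$, so that $d'$ is quartic in $(d-1)$ (at fixed $p$), giving the approximate estimate $n \approx r^{4e^2(d-1)^4}$ claimed in the theorem.

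The main obstacle, already handled by the preceding development, is ensuring simplicity of $J_\lambda$ in characteristic $p>0$, since without simplicity Lemma~\ref{hook45} may fail and we lose the replacement for the classical branching argument used in characteristic~$0$. This is precisely why the wide staircase is the right choice of partition: its hook numbers are automatically coprime to $p$, so Fayers' theorem \cite{F} immediately gives simplicity, bypassing any characteristic-$0$ representation-theoretic tools. Once that point is secured, the remainder of the proof is a routine combination of Regev's codimension estimate $c_{2n}(A)\le (d-1)^{4n}$, the hook-length dimension bound of Lemma~\ref{hook51}, and Theorem~\ref{kemer.capelli}.
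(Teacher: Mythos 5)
Your proposal matches the paper's own proof: part~(a) is Lemma~\ref{hook61} (with the wide-staircase partition $T_u$, Fayers' simplicity criterion via Example~\ref{rect1}, Lemma~\ref{hook45}, and Regev's estimate feeding into Lemma~\ref{hook51}), and part~(b) is an application of Theorem~\ref{kemer.capelli}; the algebraic rewriting of the bound and the asymptotic $u\sim\frac{2pe(d-1)^2}{3}$ also coincide with the paper's closing remark. You have simply spelled out more explicitly the equivalence of the two forms of the inequality and the role of Fayers' theorem, which the paper leaves implicit in the citation of Lemma~\ref{hook61}.
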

\begin{proof}  (a) is by Lemma~\ref{hook61}. Then (b) follows from
Theorem~\ref{kemer.capelli}.\end{proof}

For example, since
 $\frac{ u+1 }{2u+1} \ge \frac 12,$ we could
take $u \ge \frac {2pe (d-1)^2}3 . $
%
%
%\begin{proof}
%We take an identity generating a 2-sided ideal, by
%Lemma~\ref{hook41}.
%\end{proof}

This concludes the proof of Theorem~\ref{capelli.kemerp} in the
affine case.

\section{Results and proofs over  Noetherian base rings}
\label{Noeth}

We turn to the case where $C$ is a commutative  Noetherian ring.
In general, we say a $C$-algebra is PI if it satisfies a
polynomial identity having at least one coefficient equal to 1.
Let us indicate the modifications that need to be made in order to
obtain proofs of Theorems~\ref{Braun} and \ref{weaknul}.

The method of proof of Theorem~\ref{nilrep1}(2) (for the case in
which the base ring $C$ is a field) was to verify the ``weak
Nullstellensatz'', and a similar proof works for $A$ commutative
when $C$ is Jacobson, cf.~\cite[Proposition 4.4.1]{rowen1}. Thus
we have Theorems~\ref{Braun} and ~\ref{weaknul} in the commutative
case, which provide the base for our induction to prove
Theorem~\ref{thm.raz1}. The argument is carried out using
Zubrilin's methods (which were given over an arbitrary commutative
base ring.)

%The definition of  $\delta_{k,h}^{(n)}$ and
%Proposition~\ref{zubrilin4} with its corollaries do not require
%any assumption on $C$, so Zubrilin's results go through without
%any change in proof. (Actually, Zubrilin \cite{zubrilin.1}
%formulates his results in the more general terms of universal
%algebra, but a thorough discussion of that theory would take us
%too far afield in this paper.)

%One defines the ideal $\Obst_n$ just as in \S\ref{rep.22}, and
%notes that the arguments involve only words and thus do not
%require the particular base ring  (denoted $F$ there) to be a
%field. Thus, Proposition~\ref{Ra1} holds over an arbitrary base
%ring $C$. Since $C \{ x;y;t \}$ is a free $C$-module, with a
%canonical base given by the words, we can still form the free
%product $A * C \{ x;y;t \}$ as before. Then we can carry out the
%specialization argument of \S\ref{rep.23}.

It remains to find a way of proving Kemer's Capelli Theorem  over
arbitrary Noetherian base rings. One could do this directly using
Young diagrams, but there also is a ring-theoretic reduction. The
following observations about Capelli identities are useful.

                            \begin{lem}\label{Capprod}

                            (i) Suppose $n = n_1 n_2 \cdots n_t.$  If $A$ satisfies the identity $\Capl_{n_1} \times
                            \dots \times \Capl_{n_t},$
                            then   $A$ satisfies the  Capelli identity $\Capl_n$.

                            (ii) If $I \triangleleft A$  and $A/I$   satisfies $\Capl_m$
                            for $m$ odd, with $I^k = 0,$ then $A$ satisfies $\Capl_{km}.$

                           (iii) If $I \triangleleft A$  and $A/I$   satisfies $\Capl_m$
                            with $I^k = 0,$ then $A$ satisfies $\Capl_{k(m+1)}.$ \end{lem}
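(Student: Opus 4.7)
My plan is the same for all three parts: realize the target Capelli polynomial as a signed sum, over cosets of a block-diagonal subgroup $H$ of the full symmetric group, of evaluations of a product of smaller Capelli polynomials, and then show that each summand vanishes either by direct hypothesis (in (i)) or because enough factors land in the nilpotent ideal $I$ (in (ii) and (iii)).

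For (i), I would embed $H := S_{n_1} \times \cdots \times S_{n_t}$ as the block-diagonal subgroup of $S_n$ that permutes each of the $t$ consecutive blocks of positions of sizes $n_1,\dots,n_t$. Writing each $\pi \in S_n$ uniquely as $\sigma\tau$ with $\tau \in H$ and $\sigma$ a coset representative, and reindexing $z_i := x_{\sigma(i)}$, the inner sum over $\tau \in H$ factors, using block-diagonality of $\tau$ and multiplicativity of $\sgn$ over $H$, as a product $\Capl_{n_1}(z_1,\dots,z_{n_1};y_1,\dots)\cdots \Capl_{n_t}(\dots)$. This is precisely an evaluation of $\Capl_{n_1}\times\cdots\times\Capl_{n_t}$, which vanishes on $A$ by hypothesis, so every summand (and hence $\Capl_n$) vanishes on $A$.

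For (ii) and (iii), the unifying plan is to decompose $\Capl_{km}$ (respectively $\Capl_{k(m+1)}$) as in (i), with $H$ taken to be $k$ copies of $S_m$ (respectively $S_{m+1}$) acting on consecutive blocks. On any evaluation in $A$, each of the $k$ Capelli factors appearing in the inner product falls into $I$: in (ii) directly because $A/I$ satisfies $\Capl_m$; in (iii) after first Laplace-expanding $\Capl_{m+1} = \sum_i (-1)^{i-1} x_i y_1 \Capl_m(\hat x_i; y_2,\dots,y_{m+1})$ to exhibit each evaluation of $\Capl_{m+1}$ in $A$ as a sum of terms, each containing a $\Capl_m$-factor that lies in $I$. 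Since $I$ is two-sided, the product of $k$ such factors lies in $I^k = 0$, so the product polynomial is an identity of $A$, and (i) upgrades this to $\Capl_{km}$ in case (ii) and to $\Capl_{k(m+1)}$ in case (iii). The extra slot per block in (iii) is the price paid for the Laplace-expansion step; the hypothesis of odd $m$ in (ii) is what lets us avoid it, because the parity of $\sgn$ on $S_m$ then aligns the block-shuffle sign with the ambient sign of $S_{km}$ without the extra compensating variable.

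The main obstacle will be the careful sign-and-indexing bookkeeping in (i), namely confirming that the inner sum over $H$ is exactly a product of Capelli polynomials in the reindexed $z_i$'s, with no stray overall sign. A secondary point is making precise where the "$m$ odd" hypothesis enters in (ii); I expect this to arise in identifying the block-decomposition signs with the sign of the ambient Capelli, where odd $m$ gives a clean identification and even $m$ forces the extra variable of (iii).
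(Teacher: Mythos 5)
Your argument follows the same route as the paper: a left-coset decomposition of $S_n$ with respect to the block-diagonal Young subgroup $S_{n_1}\times\cdots\times S_{n_t}$ turns $\Capl_n$ into a signed sum over coset representatives of reindexed products of smaller Capelli polynomials, and in parts (ii) and (iii) each Capelli factor of each summand falls into the nilpotent ideal~$I$. The sign bookkeeping you flag as a concern is in fact harmless: since $\sgn(\sigma\tau)=\sgn(\sigma)\sgn(\tau)$ and each $\tau$ in the Young subgroup factors block-by-block, every coset contributes exactly $\sgn(\sigma)$ times a genuine evaluation of $\Capl_{n_1}\times\cdots\times\Capl_{n_t}$ on distinct $x$-variables, and a scalar multiple of an identity of~$A$ is still an identity of~$A$. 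Note that your argument (like the paper's sketch) really requires $n=n_1+\cdots+n_t$ rather than the product printed in the statement; the product is a typo, as is already visible from a small example ($\Capl_1\Capl_1\equiv 0$ means $A^4=0$, which yields $\Capl_2$ but certainly not $\Capl_1$) and from how (ii)--(iii) are used in Theorem~\ref{kemerfirst9}. You silently and correctly read ``sum'' there.

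The one genuine miss is your speculative account of where the ``$m$ odd'' hypothesis enters in (ii). It does not enter anywhere: the coset decomposition over $S_m^k\hookrightarrow S_{km}$ works for every $m$, each block of each summand is an honest $\Capl_m$-evaluation that lands in $I$, and the product of $k$ such factors lies in $I^k=0$. There is no interaction between ``block-shuffle signs'' and the ambient sign of $S_{km}$, because the coset sum already absorbs all signs. (The paper's parenthetical about interchanging odd-order sets suggests the authors were thinking of the larger wreath subgroup $S_m\wr S_k$; but that subgroup also permutes the $y$-slots and so does not factor into a clean product of Capelli polynomials, which is why the plain Young subgroup is the right one to use.) So part~(ii) in fact holds for all $m$, and part~(iii) is logically subsumed by~(ii). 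Your Laplace-expansion route to~(iii) is correct and is just a written-out version of the paper's one-line reduction ``any algebra satisfying $\Capl_m$ also satisfies $\Capl_{m+1}$.''
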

                            \begin{proof}
                            (i) Viewing the symmetric group $S_{n_1}\times \dots \times
                            S_{n_m} \hookrightarrow S_n,$ we partition $S_n$ into orbits under
                            the subgroup  $S_{n_1}\times \dots \times
                            S_{n_m}$ and
                            match the permutations in
                            $\Capl_n$.

                            (ii) This time we note that any interchange of two odd-order
                            sets of letters has negative sign, so we partition $S_{km}$ into
                            $k$ parts each with $m$ letters.

                            (iii) Any algebra satisfying
                              $\Capl_m$
                            for $m$ even, also satisfies
                            $\Capl_{m+1}$, and $m+1$ is odd.                       \end{proof}

Thus, it suffices
 to prove that $A$ satisfies a product of  Capelli identities.

\begin{thm} \label{kemerfirst9}
Any affine PI algebra over a commutative Noetherian base ring $C$
satisfies some Capelli identity.
\end{thm}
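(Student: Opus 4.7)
My plan is to establish the theorem by Noetherian induction on the base ring $C$, reducing everything to Kemer's Capelli theorem for fields (Theorems~\ref{Regcon} and~\ref{sparse}) via Lemma~\ref{Capprod}. The inductive hypothesis is that the theorem holds for affine PI algebras over every Noetherian quotient $C/J$ with $J\ne 0$; the base case, where $C$ is a field, is exactly Kemer's theorem.

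First, since the nilradical $N$ of $C$ is nilpotent by the Noetherian hypothesis, $NA$ is a nilpotent ideal of $A$ and $A/NA$ is affine PI over $C/N$. If $N\ne 0$, the inductive hypothesis gives a Capelli identity for $A/NA$, which lifts to one for $A$ via Lemma~\ref{Capprod}(iii). So I may assume $C$ is reduced.

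Second, the reduced Noetherian ring $C$ has finitely many minimal primes $P_1,\dots,P_r$. Since each $P_iP_j\subseteq P_i\cap P_j$, the product satisfies $P_1P_2\cdots P_r\subseteq \bigcap_iP_i=0$. Setting $J_i=P_iA$ and $K=\bigcap_iJ_i$, and using that each $P_i$ lies in the center of $A$, one has $K^r\subseteq J_1J_2\cdots J_r=(P_1\cdots P_r)A=0$, so $K$ is nilpotent of index at most $r$. The natural map $A/K\hookrightarrow\prod_iA/J_i$ is injective. Assuming $C$ is not already a domain, each $P_i\ne 0$, so each $C/P_i$ is a proper Noetherian quotient, and the inductive hypothesis gives a Capelli identity for each $A/J_i$ (affine PI over $C/P_i$). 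Combining these via Lemma~\ref{Capprod}(iii) yields a Capelli identity for $A$. Thus I may assume $C$ is a Noetherian domain.

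Now let $F$ be the fraction field of $C$. The base change $A_F:=A\otimes_CF$ is an affine PI algebra over $F$ (inheriting the generators and the PI of $A$), so by Kemer's Capelli theorem it satisfies some $\Capl_n$. The kernel of $A\to A_F$ is the two-sided $C$-torsion ideal $T=\{a\in A:sa=0\text{ for some }0\ne s\in C\}$, so $A/T\hookrightarrow A_F$ satisfies $\Capl_n$. The principal obstacle is that $T$ need not be nilpotent, so Lemma~\ref{Capprod}(iii) does not apply directly. I handle this by further case analysis within the inductive step: if $\text{Ann}_C(A)\ne 0$, then $A$ is already an algebra over the proper quotient $C/\text{Ann}_C(A)$ and the induction concludes. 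Otherwise $C$ embeds faithfully in $Z(A)$, and the technical heart of the proof lies in producing, for a suitable finitely generated subideal $I\subseteq T$ (arranged so that $A/I$ still satisfies a known Capelli identity), a common nonzero $C$-annihilator; this makes $I$ an algebra over a proper quotient of $C$ and invites one more application of the inductive hypothesis. Assembling the resulting Capelli identities via Lemma~\ref{Capprod} then yields one for $A$ itself. The crucial delicate step is the construction of such a finitely generated subideal of $T$, exploiting the affineness of $A$ and the multilinearity of the Capelli polynomial so that a common $C$-annihilator can be extracted.
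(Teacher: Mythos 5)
Your overall skeleton matches the paper's: Noetherian induction on the base ring, reduction through the nilradical and the minimal primes to the domain case, and then base change to the fraction field $F$ to invoke Kemer's theorem for $A_F$. (The paper happens to treat the domain case first and then deduce the general case, but your order of reductions is equivalent.) The difficulty you correctly flag — that the torsion ideal $T=\ker(A\to A_F)$ need not be nilpotent — is exactly where your argument stops being a proof.

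The idea you are missing is the paper's passage to the \emph{relatively free} algebra. Once $A$ is taken to be $C\{x_1,\dots\}/I$ with $I$ the $T$-ideal of the given PI, every evaluation of $\Capl_n$ in $A$ is the image of a single generic evaluation $\bar f_1$ under a substitution endomorphism of $A$. Since $\bar f_1$ becomes $0$ in $A_F$, one has $s\bar f_1=0$ for a single $0\neq s\in C$; because substitution endomorphisms are $C$-linear and $s$ is central, that same $s$ annihilates the whole $T$-ideal $I'$ generated in $A$ by the Capelli evaluations, so $sI'=0$. This is the ``common annihilator'' you were hoping to extract, but you cannot get it for an arbitrary affine $A$ — you need the universal property of the relatively free algebra, and it costs nothing because a Capelli identity for the relatively free algebra is inherited by all its quotients. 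With $s$ in hand, the finish is not what you propose: one observes $(sA\cap I')^2\subseteq sA\cdot I' = A\cdot sI' = 0$, so $sA\cap I'$ is nilpotent of index $\le 2$; the quotient $A/(sA\cap I')$ embeds in $A/sA\times A/I'$, where $A/sA$ satisfies a Capelli identity by the Noetherian induction applied to $C/sC$ and $A/I'$ satisfies $\Capl_n$ by construction; and Lemma~\ref{Capprod}(iii) then lifts a Capelli identity to $A$. Your alternative — make $I$ an ideal with nonzero $C$-annihilator and ``apply the inductive hypothesis to it'' — does not work as stated, because the induction is about affine PI \emph{algebras} over proper quotients of $C$; an ideal with an annihilator is neither automatically affine nor nilpotent, and there is no mechanism by which a Capelli identity for such an $I$ (even if one had it) would combine with one for $A/I$ to give one for $A$ without nilpotence of $I$.
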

\begin{proof} By Noetherian induction, we may assume that the theorem
holds for every affine PI-algebra over a proper homomorphic image
of $C$.

First we do do the case where $C$ is an integral domain, and $A =
C\{a_1, \dots, a_\ell\}$ satisfies some multilinear PI $f$. It is
enough to assume that $A$ is the relatively free algebra $C\{x_1,
\dots, x_n\}/I$ (where $I$ is the T-ideal generated by $f$). Let $F$
be the field of fractions of $C$. Then $A_F: = A \otimes _C F$ is
also a PI-algebra, and thus, by Theorem~\ref{capelli.kemerp}
satisfies some Capelli identity $f_1 = \Capl_{n}.$ Thus the image
$\bar f_1$ of $f_1$ in $A$ becomes~0 when we tensor by $F$, which
means that there is some $s\in C$ for which $s  f_1 = 0.$ Letting
$I'$ denote the T-ideal of $A$ generated by the image of $f_1,$ we
see that $s I' = 0.$ If $s =1$ then we are done,  so we may assume
that $s\in C$ is not invertible. Then $A/s A$ is an affine
PI-algebra over the proper homomorphic image $C/sC$ of $C$, and by
Noetherian induction, satisfies some Capelli identity $\Capl_{m}$,
so $A/(sA \cap  I')$ satisfies~$\Capl_{\max\{m,n\}}$. But $sA \cap
I'$ is nilpotent modulo $sAI' = As I'
 = 0,$ implying by Lemma~\ref{Capprod} that
$A$ satisfies some Capelli identity.

For the general case,
 the nilpotent
radical $N$ of $C$ is a finite intersection $P_1 \cap \dots \cap
P_t$ of prime ideals. By the previous paragraph, $A/P_jA$, being
an affine PI-algebra over the integral domain~$C/P_j,$ satisfies a
suitable Capelli identity $\Capl_{n_j},$ for $1 \le j \le t,$ so
$A/\cap (P_jA)$ satisfies $\Capl_{n},$ where $n = \max \{ n_1,
\dots, n_t\}.$ But $\cap (P_jA)$ is nilpotent modulo $NA$, so, by
Lemma~\ref{Capprod}, $A/NA$ satisfies a suitable Capelli identity
$\Capl_{n}.$ Furthermore, $N^m = 0$ for some $m$, implying again
 by
Lemma~\ref{Capprod} that $A$ satisfies $C^{mn}.$
\end{proof}

%(which so far has only been proved for fields). This is done by
%means of
%the following argument involving annihilators:
%
%
%
%Let $I$ be the annihilator in $C$ of the module $A$ of a monomial
%$h(x_1, \dots, x_n, \vec t)$ on a $p$-regular partition $\lambda$.
%The ring $C'  = C/ \sqrt I$ is a semiprime Goldie ring. Take its
%ring of fractions $K$, which is a finite direct product of fields.
%But $I^m \subseteq \sqrt I$ for some $m$, so $A$ satisfies a product of Capelli
%identities, say $\prod _{i=1}^m Cap _{n_i}$,  and hence $A$
%satisfies a Capelli identity, by Remark~\ref{Capprod}.
%
%\newpage

\end{document}